\documentclass[11pt,reqno,a4paper]{amsart}

\usepackage[T1]{fontenc}
\usepackage[utf8]{inputenc}

\usepackage{libertinus}
\usepackage{microtype}

\usepackage{geometry}

\usepackage{xcolor}

\usepackage{amsmath,amssymb,amsthm,amsfonts}
\usepackage{mathtools}
\usepackage{mathrsfs}
\usepackage{graphicx}

\usepackage[
colorlinks=true,
linkcolor=blue,
citecolor=blue,
urlcolor=blue
]{hyperref}

\usepackage[nameinlink,noabbrev]{cleveref}

\usepackage{titlesec}

\titleformat{\section}
  {\normalfont\large\bfseries\centering}
  {\thesection.}
  {0.75em}
  {}

\titlespacing*{\section}
  {0pt}
  {3.2ex plus 1ex minus .2ex}
  {1.6ex plus .3ex}

\titleformat{\subsection}
  {\normalfont\normalsize\bfseries}
  {\thesubsection.}
  {0.75em}
  {}

\titlespacing*{\subsection}
  {0pt}
  {2.4ex plus .8ex minus .2ex}
  {1.0ex plus .2ex}

\titleformat{\subsubsection}
  {\normalfont\normalsize\bfseries\itshape}
  {\thesubsubsection.}
  {0.75em}
  {}

\titlespacing*{\subsubsection}
  {0pt}
  {2.0ex plus .6ex minus .2ex}
  {0.7ex plus .2ex}
\numberwithin{equation}{section}

\usepackage{aliascnt}

\newtheoremstyle{compact}
  {6pt}   % Space above
  {1 pt}   % Space below
  {\itshape}
  {}
  {\bfseries}
  {.}
  {0.5em}
  {}

\theoremstyle{compact}

\newtheorem{theorem}{Theorem}[section]

\newaliascnt{proposition}{theorem}
\newtheorem{proposition}[proposition]{Proposition}
\aliascntresetthe{proposition}

\newaliascnt{lemma}{theorem}
\newtheorem{lemma}[lemma]{Lemma}
\aliascntresetthe{lemma}

\newaliascnt{corollary}{theorem}
\newtheorem{corollary}[corollary]{Corollary}
\aliascntresetthe{corollary}

\theoremstyle{definition}
\newaliascnt{definition}{theorem}
\newtheorem{definition}[definition]{Definition}
\aliascntresetthe{definition}

\newtheorem*{question}{Question}
\newtheorem*{conjecture}{Conjecture}

\theoremstyle{remark}
\newaliascnt{remark}{theorem}
\newtheorem{remark}[remark]{Remark}
\aliascntresetthe{remark}

\usepackage{etoolbox}

\AtBeginEnvironment{proof}{\vspace{6pt}}

\usepackage{enumitem}

\setlist{
itemsep=2pt,
topsep=4pt
}

\usepackage[toc,page]{appendix}

\newcommand\subsetsim{\mathrel{%
\ooalign{\raise0.2ex\hbox{$\subset$}\cr\hidewidth\raise-0.8ex\hbox{\scalebox{0.9}{$\sim$}}\hidewidth\cr}}}

\DeclareMathOperator{\Per}{Per}

\DeclareMathOperator{\Ann}{Ann}

\usepackage{comment}

\makeatletter
\renewcommand{\l@section}[2]{%
  \par\addpenalty\@secpenalty
  \addvspace{1.0em plus 1pt}%
  \@tempdima 1.5em
  \begingroup
    \parindent \z@
    \rightskip \@pnumwidth
    \parfillskip -\@pnumwidth
    \leavevmode
    \bfseries
    \advance\leftskip\@tempdima
    \hskip -\leftskip
    #1\nobreak\hfil \nobreak\hb@xt@\@pnumwidth{\hss #2}\par
  \endgroup}
\makeatother

\usepackage{bbm}
\usepackage{amscd}
\usepackage[all,cmtip]{xy}
\usepackage{changepage}
\usepackage{calc}
\usepackage{scalerel}
\usepackage{extarrows}
\usepackage{aliascnt}

\DeclareMathSymbol{\shortminus}{\mathbin}{AMSa}{"39}

\newcommand{\ignore}[1]{}

\begin{document}

\title{Directional expansion in ergodic actions of countable groups}

\author[M. Bj\"orklund]{Michael Bj\"orklund}
\address{Department of Mathematics, Chalmers University of Technology and University of Gothenburg, Gothenburg, Sweden}
\email{micbjo@chalmers.se}

\author[A. Fish]{Alexander Fish}
\address{School of Mathematics and Statistics, University of Sydney, NSW 2006, Australia}
\email{alexander.fish@sydney.edu.au}

\subjclass[2020]{Primary: 28D15, 37A15 Secondary: 22E27}
\keywords{Directional expansivity, totally ergodic actions, nilpotent groups}

\begin{abstract}
We study directional expansion for probability-measure-preserving actions
of countable groups through a representation-theoretic group property,
the cyclic escape property. An infinite countable group has the cyclic
escape property if every totally ergodic unitary representation has
arbitrarily small fixed-vector projections along infinite cyclic
subgroups. This property implies directional expansivity for all totally
ergodic actions. We prove that all infinite finitely generated nilpotent
groups have the cyclic escape property, and conjecture the same for all
infinite finitely generated polycyclic groups. We also prove the cyclic
escape property for higher-rank simple lattices whose finite-dimensional
unitary representations all have finite image; in particular, for
\(\operatorname{SL}_n(\mathbb Z)\),
\(\operatorname{PSL}_n(\mathbb Z)\), and
\(\operatorname{PGL}_n(\mathbb Z)\), \(n\geq 3\). By contrast, free
groups of rank at least two do not have the cyclic escape property. The
proofs exhibit two independent mechanisms: central spectral structure in
nilpotent groups and stationary character rigidity in higher-rank
lattices.
\end{abstract}

\maketitle

\section{Introduction}

\subsection{Directional expansion}
\label{subsec:dir}

A basic problem in ergodic theory is to understand how an action behaves
after restriction to subgroups. For actions of \(\mathbb Z^d\) and
\(\mathbb R^d\), this leads to directional notions of ergodicity, weak
mixing, and mixing. The question of when an ergodic action has an
ergodic element was studied by Pugh and Shub~\cite{Pugh_Shub_1971};
directional versions for \(\mathbb Z^d\)- and \(\mathbb R^d\)-actions
were studied by Robinson, Rosenblatt and
\c{S}ahin~\cite{Robinson_Rosenblatt_Sahin_2022}.

There is also a related directional theory for infinite-measure actions
of nilpotent lattices: Danilenko studied recurrence and rigidity along
one-parameter subgroups of the ambient simply connected nilpotent Lie
group~\cite{Danilenko_2017}. Our setting is different. We work with
probability-measure-preserving actions of countable groups, and the
directions are cyclic subgroups of the acting group itself.

Let \(G\) be a countable discrete group, and let
\(G\curvearrowright (X,\mu)\) be a probability-measure-preserving
action.

\begin{definition}[Directional expansion]
\label{def:directionally-expansive-action}
The action is \emph{directionally expansive} if, for every measurable
set \(B\subset X\) with \(\mu(B)>0\) and every \(\varepsilon>0\), there
exists a cyclic subgroup \(C\leq G\) such that
\[
        \mu(CB)>1-\varepsilon,
        \qquad
        CB=\bigcup_{c\in C}cB .
\]
\end{definition}

Directional expansivity implies ergodicity. Indeed, if \(A\subset X\)
is \(G\)-invariant and has positive measure, then \(CA=A\) for every
cyclic subgroup \(C\leq G\). Applying the definition to \(B=A\) gives
\(\mu(A)>1-\varepsilon\) for every \(\varepsilon>0\), and hence
\(\mu(A)=1\).

We allow \(C\) to be finite. This is immaterial for the positive results
below, where the expanding cyclic subgroups will be infinite. For
general countable groups, it avoids excluding torsion groups by
definition.

The positive results are stated for totally ergodic actions. An action
is \emph{totally ergodic} if its restriction to every finite-index
subgroup of \(G\) is ergodic. This assumption rules out finite factor
obstructions: ordinary ergodicity alone does not suffice; see
\cite[Example~1.7]{Bjorklund_Fish_2024}. Directional expansivity is
nevertheless weaker than asking for an ergodic cyclic subaction, since
even weakly mixing \(\mathbb Z^r\)-actions need not have one; see
\cite[Example~1.6]{Bjorklund_Fish_2024}.

For free abelian groups, directional expansivity was introduced by the
authors~\cite{Bjorklund_Fish_2024} and used to study volume spectra of
simplices in large subsets of free abelian groups. It has since been
used to obtain qualitative and quantitative results for various
geometric and combinatorial spectra; see
\cite{Bjorklund_Cullman_Fish_2025,Fish_Skinner_2025,BFS26,CS26}. The
present paper studies new classes of countable groups for which total
ergodicity forces directional expansivity.

%%%%%%%%%%%%%%%%%%%%%%%%%%%%%%%%%%%%%%%%%%%%%%%%%%%%%%%%%%%%%%%%%%%%%%

\subsection{The cyclic escape property}
\label{subsec:cyclic-escape-property}

We isolate the representation-theoretic group property used to prove
directional expansivity.

Let \((V,\pi)\) be a unitary representation of a countable group \(G\).
If \(H\leq G\), write
\[
        V^H=\{v\in V:\pi(h)v=v\text{ for every }h\in H\},
\]
and let \(P_H\) denote the orthogonal projection onto \(V^H\). The
representation \((V,\pi)\) is called \emph{totally ergodic} if $V^H=\{0\}$
for every finite-index subgroup \(H\leq G\).

\begin{definition}[Cyclic escape property]
\label{def:cyclic-escape-property}
An infinite countable group \(G\) has the \emph{cyclic escape property}
if, for every totally ergodic unitary representation \((V,\pi)\) of
\(G\) and every \(v\in V\),
\[
        \inf_C \|P_Cv\|=0,
\]
where the infimum is over all infinite cyclic subgroups \(C\leq G\).
\end{definition}

The cyclic escape property is a group-level sufficient condition for
directional expansivity of totally ergodic actions.

\begin{lemma}
\label{lem:cyclic-escape-implies-directional-expansion}
If \(G\) has the cyclic escape property, then every totally ergodic
probability-measure-preserving action of \(G\) is directionally
expansive.
\end{lemma}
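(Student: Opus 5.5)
We pass to the Koopman representation on the orthogonal complement of the constants, \(V=L^2_0(X,\mu)\), with \((\pi(g)f)(x)=f(g^{-1}x)\). Total ergodicity of the action means that for every finite-index subgroup \(H\leq G\) the only \(H\)-invariant functions in \(L^2(X,\mu)\) are the constants. Hence \(V^H=\{0\}\), so \((V,\pi)\) is a totally ergodic unitary representation. Given \(B\) with \(\mu(B)>0\) and \(\varepsilon>0\), we set \(v=1_B-\mu(B)\in V\). The cyclic escape property then yields infinite cyclic subgroups \(C\) with \(\|P_Cv\|\) as small as we like. Since \(P_C\mathbf 1=\mathbf 1\), this gives \(P_C1_B=\mu(B)\mathbf 1+P_Cv\), where \(P_C\) now denotes the projection onto \(C\)-invariant vectors in all of \(L^2(X,\mu)\).

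The key step links \(P_C1_B\) to the set \(CB\). The set \(CB\) is \(C\)-invariant, so \(1_{CB}\) is a \(C\)-invariant function and \(\langle P_C1_B,1_{X\setminus CB}\rangle=\langle 1_B,1_{X\setminus CB}\rangle=0\). We also have \(0\le P_C1_B\) almost everywhere. This holds because \(P_C\) is the conditional expectation onto the \(\sigma\)-algebra of \(C\)-invariant sets, which follows from the mean ergodic theorem for the cyclic group \(C\cong\mathbb Z\): \(P_C\) is the \(L^2\)-limit of the averages \(\frac1N\sum_{n<N}\pi(c^n)\), which are positivity preserving. Together these give \(P_C1_B=0\) a.e. on \(X\setminus CB\). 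Consequently, on \(X\setminus CB\) we have \(P_Cv=-\mu(B)\), so
\[
\mu(B)^2\,\mu(X\setminus CB)\le \|P_Cv\|^2 .
\]
Choosing \(C\) with \(\|P_Cv\|^2<\varepsilon\,\mu(B)^2\) gives \(\mu(CB)>1-\varepsilon\), as required.

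The only delicate point is justifying positivity of \(P_C\), equivalently that \(P_C1_B\) is supported in \(CB\). I would handle this with the von Neumann ergodic averages along the generator of \(C\), as above. An alternative is to note directly that \(P_C1_B\) is the conditional expectation of \(1_B\) given the \(C\)-invariant \(\sigma\)-algebra, which is supported on the smallest invariant set containing \(B\), namely \(CB\) up to null sets. Everything else is a direct unpacking of the definitions; in particular, the cyclic subgroup produced is infinite.
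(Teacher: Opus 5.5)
Your proof is correct and is essentially the paper's argument: both pass to the Koopman representation on \(L^2_0(X,\mu)\), identify the \(C\)-invariant projection of \(1_B\) with \(\mu(B)+P_Cv\), and use that this function vanishes off \(CB\). The differences are minor: you justify the support claim explicitly (positivity of the ergodic averages plus orthogonality to \(1_{X\setminus CB}\)) and conclude directly from \(P_Cv=-\mu(B)\) on \(X\setminus CB\), whereas the paper simply asserts the support property and applies Cauchy--Schwarz on \(CB\) to get \(\mu(B)^2\le\mu(CB)\bigl(\mu(B)^2+\|P_Cv\|^2\bigr)\).
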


\begin{proof}
Let $G\curvearrowright (X,\mu)$ be a totally ergodic probability-measure-preserving action. Let
\(B\subset X\) be measurable with \(\mu(B)>0\), and put
\[
        v=1_B-\mu(B)\in L^2_0(X,\mu).
\]
The Koopman representation on \(L^2_0(X,\mu)\) is totally ergodic in the
sense above.

Let \(C\leq G\) be an infinite cyclic subgroup, and let
\(\mathcal I_C\) be the \(C\)-invariant sigma-algebra. Then
\[
        \mathbb E(1_B\mid\mathcal I_C)=\mu(B)+P_Cv.
\]
This conditional expectation is supported on \(CB\). Hence
\[
\begin{aligned}
        \mu(B)^2
        &=
        \left(
        \int_{CB}\mathbb E(1_B\mid\mathcal I_C)\,d\mu
        \right)^2  \\
        &\leq
        \mu(CB)
        \int_{CB}
        \left|\mathbb E(1_B\mid\mathcal I_C)\right|^2\,d\mu \\
        &\leq
        \mu(CB)
        \left(
        \mu(B)^2+\|P_Cv\|_2^2
        \right).
\end{aligned}
\]
Since \(G\) has the cyclic escape property, \(C\) can be chosen so that
\(\|P_Cv\|_2\) is arbitrarily small. Therefore, for every
\(\varepsilon>0\), some infinite cyclic subgroup \(C\leq G\) satisfies
\[
        \mu(CB)>1-\varepsilon .
\]
Thus the action is directionally expansive.
\end{proof}

The cyclic escape property is not restricted to finitely generated
groups. 
Indeed,
\[
        \mathbb Q=\bigcup_{n\geq 1}(1/n!)\mathbb Z .
\]
Let $C_n=(1/n!)\mathbb Z$. Then \(C_n\leq C_{n+1}\), and hence the fixed subspaces \(V^{C_n}\)
decrease to \(V^{\mathbb Q}\). An ergodic representation $(V,\pi)$ of
\(\mathbb Q\) satisfies $V^{\mathbb Q}=\{0\}$. Therefore
\[
        P_{C_n}v\longrightarrow 0
\]
for every vector \(v \in V\). This proves the cyclic escape property for
\(\mathbb Q\). Consequently, every ergodic probability-measure-preserving
action of \(\mathbb Q\) is directionally expansive.

%%%%%%%%%%%%%%%%%%%%%%%%%%%%%%%%%%%%%%%%%%%%%%%%%%%%%%%%%%%%%%%%%%%%%%

\subsection{Nilpotent groups}
\label{subsec:nilpotent-groups}

The first main result is that finitely generated nilpotent groups have
the cyclic escape property.

\begin{theorem}
\label{thm:main}
Every infinite finitely generated nilpotent group has the cyclic escape
property. Consequently, every totally ergodic
probability-measure-preserving action of such a group is directionally
expansive.
\end{theorem}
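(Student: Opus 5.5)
The plan is to induct on the Hirsch length \(h(G)\), using the spectral theory of a central element of infinite order. First I reduce to a convenient subgroup. Let \(H\leq G\) be a torsion-free finite-index subgroup; it is again finitely generated nilpotent. A totally ergodic representation of \(G\) restricts to a totally ergodic representation of \(H\), since finite-index subgroups of \(H\) have finite index in \(G\). Infinite cyclic subgroups of \(H\) are infinite cyclic in \(G\). Hence it suffices to treat \(H\), and I may assume \(G\) is torsion-free. Then \(G\) has infinite center, and I fix a central element \(z\) of infinite order. In the base case \(h(G)=0\) the group is finite, and total ergodicity (applied to \(H=\{e\}\)) forces \(V=\{0\}\). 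So the statement holds vacuously, and the induction can start at groups of Hirsch length \(0\).

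Fix a totally ergodic \((V,\pi)\), a vector \(v\in V\) and \(\varepsilon>0\). For \(N\geq1\) put \(V_N=\ker(\pi(z^N)-1)\). This space is \(G\)-invariant because \(z\) is central. Let \(W\) be the closed span of all \(V_N\) (the root-of-unity spectral part of \(\pi(z)\)), and let \(U=W^\perp\); both are \(G\)-invariant. Taking \(N=n!\), the spaces \(V_N\) increase to \(W\). I therefore choose \(N\) so that \(v=v_N+a+u\), where \(v_N\in V_N\), \(\|a\|<\varepsilon\), \(a\perp V_N\), \(a\in W\), and \(u\in U\). Since these subspaces are \(G\)-invariant, every \(P_C\) commutes with the orthogonal projections onto them. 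It then suffices to find one infinite cyclic \(C\) with \(\|P_Cv_N\|\) and \(\|P_Cu\|\) both small.

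For \(v_N\): the representation on \(V_N\) factors through \(\bar G=G/\langle z^N\rangle\), which has Hirsch length \(h(G)-1\). It is totally ergodic, because a finite-index subgroup of \(\bar G\) pulls back to a finite-index subgroup of \(G\). If \(\bar G\) is finite, then \(V_N=0\). Otherwise induction gives an infinite cyclic \(\bar C=\langle\bar g\rangle\) with \(\|P_{\bar C}v_N\|<\varepsilon\). For any lift \(g\) and any \(M\in N\mathbb Z\), the element \(gz^M\) has infinite order, and it acts on \(V_N\) exactly as \(\bar g\) does. Thus \(P_{\langle gz^M\rangle}v_N=P_{\bar C}v_N\), and I keep the freedom to choose \(M\).

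The key step, and the one I expect to be the main obstacle, is controlling \(u\) by choosing \(M\). Since \(\pi(g)\) and \(\pi(z)\) commute, the vector \(u\) has a joint spectral measure \(\sigma_u\) on \(\mathbb T^2\). By von Neumann's ergodic theorem,
\[
\|P_{\langle gz^M\rangle}u\|^2=\sigma_u\bigl(\{(s,t):st^M=1\}\bigr).
\]
For \(M\neq M'\), the two sets intersect only where \(t^{M-M'}=1\). Such \(t\) are roots of unity, a set of \(\sigma_u\)-measure zero, because \(u\in U\) has no root-of-unity spectrum for \(z\). So the sets \(\{st^M=1\}\), \(M\in N\mathbb Z\), are pairwise disjoint modulo \(\sigma_u\)-null sets. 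Their masses are therefore summable, and hence small for some large \(M\in N\mathbb Z\). With \(C=\langle gz^M\rangle\),
\[
\|P_Cv\|\leq\|P_Cv_N\|+\|P_Ca\|+\|P_Cu\|<3\varepsilon,
\]
which completes the induction. The second assertion of the theorem then follows from Lemma~\ref{lem:cyclic-escape-implies-directional-expansion}. The details I would check carefully are the claimed commutation of \(P_C\) with the invariant decomposition and the fact that \(\bar g\) has a lift generating an infinite cyclic subgroup.
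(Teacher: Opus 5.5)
Your proof is correct, and it takes a genuinely different and more economical route than the paper.

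The paper inducts on nilpotency class using the whole last term $Z=\gamma_r(G)$ of the lower central series, a finitely generated abelian central subgroup that may have higher rank and torsion. It truncates the periodic spectrum to $\Omega=\operatorname{Ann}(k!Z)$. Since $G/K_\Omega$ is then only a finite central extension of $Q=G/Z$, it has to transport the induction hypothesis through the finite-kernel transfer lemma, which uses residual finiteness and the induced-representation argument of Lemma~\ref{lem:finite-index-subgroup-cyclic-escape}. The aperiodic part is handled by averaging $\|P_{\langle zh\rangle}u\|^2$ over a F\o lner sequence in $K_\Omega$, and the abelian case is a separate base case.

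By quotienting instead by the infinite cyclic central subgroup $\langle z^N\rangle$ and inducting on Hirsch length, you land directly in a group covered by the induction hypothesis. So none of the finite-kernel machinery is needed, and abelian groups are absorbed into the same induction. Your aperiodic estimate uses the joint spectral measure of the commuting unitaries $\pi(g),\pi(z)$ and the $\sigma_u$-almost disjointness of the sets $\{st^M=1\}$. It is shorter than the F\o lner argument and proves more: $\sum_{M\in N\mathbb Z}\|P_{\langle gz^M\rangle}u\|^2\leq\|u\|^2$, so these projections tend to $0$ as $|M|\to\infty$, not merely on average.

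The paper's organization, in turn, keeps the induction on $G$ itself without passing to a torsion-free subgroup. It packages the step for an arbitrary finitely generated abelian central subgroup (Proposition~\ref{prop:central-induction-step}), and its finite-index and finite-kernel lemmas are needed again for the higher-rank lattices anyway.

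The one detail you leave implicit is the case where $\bar G$ is finite. There $V_N=\{0\}$, and you can take $g=e$ with any $M\in N\mathbb Z\setminus\{0\}$.
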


The second assertion follows from
Lemma~\ref{lem:cyclic-escape-implies-directional-expansion}. The proof
of the cyclic escape property is by induction on the nilpotency class.

In the abelian case, total ergodicity excludes atoms at torsion
characters, and Fejér averages give cyclic subgroups with small
fixed-vector projections. For the induction step, assume that \(G\) has
nilpotency class \(r\geq 2\), and let
\[
        Z=\gamma_r(G)
\]
be the last nontrivial term of the lower central series. Then \(Z\) is
finitely generated, abelian and central, while \(G/Z\) has smaller
nilpotency class.

The centrality of \(Z\) is the key point. It makes the spectral
decomposition over \(\widehat Z\) invariant under the whole group. Given
a vector \(v\), we split it into periodic and aperiodic central spectral
parts. The periodic part is reduced to the induction hypothesis through
quotients by finite-index subgroups of \(Z\). The aperiodic part is
controlled by averaging over finite-index subgroups of \(Z\). Combining
the two pieces gives an infinite cyclic subgroup \(C\leq G\) for which
$\|P_Cv\|$
is arbitrarily small.

%%%%%%%%%%%%%%%%%%%%%%%%%%%%%%%%%%%%%%%%%%%%%%%%%%%%%%%%%%%%%%%%%%%%%%

\subsection{The polycyclic conjecture}
\label{subsec:polycyclic-conjecture}

The nilpotent theorem suggests the following extension.

\begin{conjecture}
\label{conj:polycyclic-cyclic-escape}
\emph{Every infinite finitely generated polycyclic group has the cyclic escape
property.}
\end{conjecture}

The conjecture asks whether the nilpotent result is part of a broader
solvable-group phenomenon. The difficulty appears to be a uniformity
problem: different cyclic subgroups may work on individual irreducible pieces,
whereas the cyclic escape property requires one cyclic subgroup which
gives a small projection for a prescribed vector in an arbitrary
totally ergodic representation.

%%%%%%%%%%%%%%%%%%%%%%%%%%%%%%%%%%%%%%%%%%%%%%%%%%%%%%%%%%%%%%%%%%%%%%

\subsection{Higher-rank simple lattices}
\label{subsec:higher-rank-simple-lattices}

The cyclic escape property is not confined to nilpotent or solvable
groups. We also prove it for a large class of higher-rank lattices, by a
method unrelated to the nilpotent argument.

For brevity, we call a lattice in a connected noncompact simple Lie
group with finite centre and real rank at least two a
\emph{higher-rank simple lattice}.

\begin{theorem}
\label{thm:higher-rank-lattices}
Let \(L\) be a connected noncompact simple Lie group with finite centre
and real rank at least two, and let \(G<L\) be a lattice.

\begin{enumerate}
\item
If \(L\) has trivial centre, then every weakly mixing
probability-measure-preserving action of \(G\) is directionally
expansive.

\item
If every finite-dimensional unitary representation of \(G\) has finite
image, then \(G\) has the cyclic escape property. Consequently, every
totally ergodic probability-measure-preserving action of \(G\) is
directionally expansive.
\end{enumerate}
\end{theorem}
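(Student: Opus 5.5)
We plan to deduce both parts from stationary character rigidity for higher-rank lattices (Bekka for \(\operatorname{SL}_n(\mathbb Z)\); Peterson and Boutonnet--Houdayer in general). This rigidity gives a dichotomy for positive definite functions on \(G\) that are conjugation invariant. Given a unitary representation \((V,\pi)\) and \(v\in V\), we consider the function
\[
\varphi(g)=\langle \pi(g)v,v\rangle
\]
and its conjugation average \(\tau\), obtained as a weak\(^*\) limit of averages of \(\varphi\) along conjugates (or as a stationary limit with respect to a suitable random walk). Then \(\tau\) is a character of \(G\). By rigidity, \(\tau\) is a convex combination of a part supported on the finite centre \(Z(G)\) and a part coming from characters factoring through finite-dimensional unitary representations of \(G\). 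Under the hypothesis of (2), the second part factors through finite quotients.

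Our plan is to relate \(\|P_Cv\|\) to \(\tau\). For an infinite cyclic \(C=\langle c\rangle\), von Neumann's ergodic theorem gives
\[
\|P_Cv\|^2=\lim_N \frac1N\sum_{n<N}\varphi(c^n).
\]
Conjugating \(c\) by \(g\) gives
\[
\|P_{gCg^{-1}}v\|^2=\lim_N\frac1N\sum_{n<N}\varphi(gc^ng^{-1}).
\]
Averaging over \(g\) in the appropriate sense should therefore yield
\[
\inf_g \|P_{gCg^{-1}}v\|^2\le \lim_N\frac1N\sum_{n<N}\tau(c^n).
\]
The infimum over conjugates is at most any average. The subtle point is exchanging the limit in \(N\) with the weak\(^*\) limit. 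To handle it, we will instead average the projections \(P_{gCg^{-1}}\) directly. For fixed \(C\), the map \(g\mapsto\langle P_{gCg^{-1}}v,v\rangle\) can be written as \(\langle P_C\pi(g)^{-1}v,\pi(g)^{-1}v\rangle\), and the conjugation-averaged state is itself a limit of such expressions.

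Next we compute \(\tau\) on powers of \(c\). Choose \(c\) of infinite order with \(c^n\notin Z(G)\) for \(n\neq 0\); such elements exist, for instance unipotents. The central component then contributes only at \(n=0\), so its Cesàro average vanishes. For the finite-image component, take \(c\) lying in a finite-index normal subgroup \(H\) on which every relevant finite representation is trivial. This component then contributes
\[
\langle P_{\mathrm{fin}}v,v\rangle,
\]
where \(P_{\mathrm{fin}}\) is the projection onto vectors generating finite-dimensional subrepresentations. Total ergodicity kills this projection: a finite-dimensional subrepresentation has finite image by hypothesis, so it has \(V^H\neq 0\) for some finite-index \(H\). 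The main difficulty is that different finite quotients may appear in \(\tau\), so no single \(H\) obviously works.

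We resolve this by noting that the finite-representation part of \(\tau\) is determined by \(v\)'s components in finite-dimensional subrepresentations of \(V\). Total ergodicity makes all these components zero. Hence \(\tau=\tau|_{Z(G)}\cdot 1_{Z(G)}\) exactly, and the preceding argument gives \(\inf_C\|P_Cv\|=0\). This proves (2). For (1), weak mixing already excludes all finite-dimensional subrepresentations. Trivial centre of \(L\) makes the central part of \(\tau\) a multiple of \(\delta_e\), with the possible finite centre of \(G\) handled by the same choice of \(c\). Lemma~\ref{lem:cyclic-escape-implies-directional-expansion} then applies to the Koopman representation on \(L^2_0\).

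We expect the main obstacle to be the precise form of character rigidity needed. Only the restriction of \(\tau\) to the powers of \(c\) matters, but the averaging must produce a genuine character to which rigidity applies. We will first try the stationary-state formulation of Boutonnet--Houdayer, which directly gives invariance under conjugation from a random walk average.
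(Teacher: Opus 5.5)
\textbf{Overall.} Your architecture matches the paper's. You take a $\kappa$-stationary state in the weak$^*$-closed convex hull of the conjugation orbit of the vector state of $v$, and Boutonnet--Houdayer makes it conjugation invariant. You then test it on the powers of one fixed element $c$ of infinite order, and total ergodicity plus finite image gives weak mixing. The problems are in how you justify the two key steps.

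\textbf{Gap 1: the finite-dimensional part of $\tau$.} The decisive step is the claim that ``the finite-representation part of $\tau$ is determined by $v$'s components in finite-dimensional subrepresentations'' and therefore vanishes. This is asserted, not proved, and it does not follow from weak mixing. The same gap affects your sentence for (1), ``weak mixing already excludes all finite-dimensional subrepresentations.''

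The function $\tau$ is only a pointwise limit of convex combinations of the coefficients $\langle\pi(\cdot)\pi(g)v,\pi(g)v\rangle$. Such limits do not respect the splitting into almost periodic and weakly mixing parts. What you actually know is that $\tau$ extends to a tracial state on $C^*_\pi(G)$. So its GNS representation, and hence every finite-dimensional constituent $\rho$ of $\tau$, is only \emph{weakly contained} in $\pi$. A weakly mixing representation can weakly contain finite-dimensional ones: the regular representation of an infinite amenable group weakly contains the trivial one.

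To close this you need an extra input. One option is property (T), which $G$ inherits from $L$. From $\rho\prec\pi$ you get $1\le\rho\otimes\bar\rho\prec\pi\otimes\bar\rho$, hence $1\le\pi\otimes\bar\rho$. This gives a nonzero intertwiner $W\to V$ whose image is a finite-dimensional subrepresentation of $\pi$, contradicting weak mixing. The paper avoids the central/finite-dimensional character decomposition altogether. It quotes Boutonnet--Houdayer in unique-trace form: for weakly mixing $\pi$, the only tracial state on $C^*_\pi(G)$ is $\pi(g)\mapsto\delta_{g,e}$. This gives $\tau(c^k)=0$ for $k\neq 0$ directly.

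\textbf{Gap 2: exchanging the limits.} You flag this issue but do not resolve it. The one-sided averages $\frac1N\sum_{n<N}\pi(c^n)$ do not dominate $P_{\langle c\rangle}$. Moreover, $P_{\langle c\rangle}$ is in general not in $C^*_\pi(G)$, so a weak$^*$ limit state on $C^*_\pi(G)$ says nothing about $\langle P_{gCg^{-1}}v,v\rangle$ without an operator inequality. ``Averaging the projections directly'' needs exactly such an inequality.

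The missing device is Fej\'er domination. Since $1_{\{1\}}(z)\le|\frac1N\sum_{r<N}z^r|^2$ on $\mathbb T$, for every fixed $N$ and $g$ you have
\[
\|P_{gCg^{-1}}v\|^2\le\sum_{|k|<N}\frac{N-|k|}{N^2}\,\varphi(gc^kg^{-1}).
\]
Approximate $\tau$ on the finitely many $c^k$, $|k|<N$, by a finite convex combination of conjugates; the paper does this via Hahn--Banach. This gives $\inf_g\|P_{gCg^{-1}}v\|^2\le\|v\|^2/N$, and only then do you let $N\to\infty$.

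\textbf{Smaller points.}
\begin{enumerate}
\item Unipotents are not available in cocompact lattices, which contain no nontrivial unipotent elements. Any $c$ of infinite order works, because $Z(G)$ is finite, and such a $c$ exists by Schur's theorem as in the paper.
\item Treating a nontrivial finite centre directly is a genuine difference from the paper. The paper passes to $L/Z(L)$ and lifts back with the finite-kernel transfer lemma, which uses residual finiteness of $G$. Your route instead needs the stationary rigidity statement for lattices with nontrivial centre, and you should check that it is available in that generality.
\end{enumerate}
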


The proof is based on stationary character rigidity for higher-rank
lattices, due to Boutonnet--Houdayer. In the trivial-centre case, this
gives the weakly mixing statement. The additional finite-image
hypothesis in part (2) is used to pass from total ergodicity to weak
mixing; the finite-centre case is then obtained by passing to the
adjoint quotient and using the finite-kernel transfer lemma.

In Section~\ref{sec:higher-rank-lattices}, we verify the finite-image
hypothesis for standard arithmetic examples. In particular, we obtain
the following consequence.

\begin{corollary}
\label{cor:SL-PSL-PGLnZ}
For every \(n\geq 3\), the groups
\[
        \operatorname{SL}_n(\mathbb Z),\qquad
        \operatorname{PSL}_n(\mathbb Z),\qquad
        \operatorname{PGL}_n(\mathbb Z)
\]
have the cyclic escape property. Consequently, every totally ergodic
probability-measure-preserving action of any of these groups is
directionally expansive.
\end{corollary}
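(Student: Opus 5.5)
The plan is to derive Corollary~\ref{cor:SL-PSL-PGLnZ} from part (2) of Theorem~\ref{thm:higher-rank-lattices}, together with Lemma~\ref{lem:cyclic-escape-implies-directional-expansion} for the dynamical consequence. Fix \(n\geq 3\). There are three steps.

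\textbf{Step 1: the groups are higher-rank simple lattices.} \(\operatorname{SL}_n(\mathbb Z)\) is a lattice in \(L=\operatorname{SL}_n(\mathbb R)\) (Borel--Harish-Chandra, or classically via Siegel sets). The group \(L\) is connected, noncompact and simple modulo its finite centre \(\{\pm I\}\cap L\), with real rank \(n-1\geq 2\). The image of \(\operatorname{PSL}_n(\mathbb Z)\) in \(\operatorname{PSL}_n(\mathbb R)\) is a lattice in a centre-free group. For \(\operatorname{PGL}_n(\mathbb Z)=\operatorname{GL}_n(\mathbb Z)/\{\pm I\}\), I would use the embedding into \(\operatorname{PGL}_n(\mathbb R)\). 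When \(n\) is odd, \(\operatorname{PGL}_n(\mathbb R)=\operatorname{PSL}_n(\mathbb R)\) is connected and \(\operatorname{PGL}_n(\mathbb Z)\cong \operatorname{SL}_n(\mathbb Z)\). When \(n\) is even, \(\operatorname{PGL}_n(\mathbb R)\) has two components and \(\operatorname{PGL}_n(\mathbb Z)\) contains \(\operatorname{PSL}_n(\mathbb Z)\) with index \(2\). To handle this case I need a transfer: the cyclic escape property passes to a group \(G\) containing a finite-index subgroup \(H\) that has it. The argument runs as follows. A totally ergodic representation of \(G\) restricts to a totally ergodic representation of \(H\), because finite-index subgroups of \(H\) have finite index in \(G\). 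Infinite cyclic subgroups of \(H\) are infinite cyclic in \(G\). Hence \(\inf_C\|P_Cv\|=0\) already over \(C\leq H\).

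\textbf{Step 2: finite-dimensional unitary representations have finite image.} This is the main step. For \(\operatorname{SL}_n(\mathbb Z)\), \(n\geq 3\), I would invoke the congruence subgroup property (Bass--Milnor--Serre) together with Margulis superrigidity. Let \(\rho:\operatorname{SL}_n(\mathbb Z)\to U(d)\). By superrigidity, the Zariski closure of the image has compact semisimple identity component, and \(\rho\) would extend to a continuous homomorphism from \(\operatorname{SL}_n(\mathbb R)\) into a compact group unless the image is finite. Since \(\operatorname{SL}_n(\mathbb R)\) has no nontrivial continuous homomorphisms to compact groups (Howe--Moore, or simplicity plus noncompactness), the image is finite.

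A more elementary route also works. By Kazhdan's property (T) and finite generation, \(\rho\) has image in \(U(d)\cap \operatorname{GL}_d(K)\) for a number field \(K\) after conjugation (local rigidity, \(H^1=0\)). Elementary matrices are unipotent-like: \(e_{ij}(1)\) is conjugate by a diagonal matrix of \(\operatorname{SL}_n(\mathbb Z)\)-type relations to its powers. Concretely, the commutator relation \([e_{ij}(a),e_{jk}(b)]=e_{ik}(ab)\) combined with compactness forces \(\rho(e_{ij}(1))\) to have finite order. The normal closure of \(e_{ij}(m)\) is then in \(\ker\rho\), and this normal closure is the finite-index congruence subgroup \(E(n,m\mathbb Z)\), by Bass--Milnor--Serre.

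For \(\operatorname{PSL}_n(\mathbb Z)\), a representation pulls back to \(\operatorname{SL}_n(\mathbb Z)\) with the same image, so finiteness follows from the \(\operatorname{SL}_n(\mathbb Z)\) case. For \(\operatorname{PGL}_n(\mathbb Z)\) the restriction to an index-\(\leq 2\) subgroup has finite image, hence the whole image is finite.

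\textbf{Step 3: conclusion.} Apply part (2) of Theorem~\ref{thm:higher-rank-lattices} to \(\operatorname{SL}_n(\mathbb Z)\) and \(\operatorname{PSL}_n(\mathbb Z)\). Then use the finite-index transfer from Step 1 for \(\operatorname{PGL}_n(\mathbb Z)\) with \(n\) even. Finally, Lemma~\ref{lem:cyclic-escape-implies-directional-expansion} gives directional expansivity of all totally ergodic actions.

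The main obstacle is the finite-image verification in Step 2. I would rely on cited deep results (the congruence subgroup property and Margulis superrigidity) rather than reprove them.
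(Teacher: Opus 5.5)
Your proposal is correct and follows essentially the same route as the paper. You verify the finite-image hypothesis through Margulis superrigidity: a virtual extension of a unitary representation to \(\operatorname{SL}_n(\mathbb R)\) must be trivial, which is equivalent to the paper's Borel-density and unipotent-generation argument. You then apply part (2) of Theorem~\ref{thm:higher-rank-lattices}, pass to \(\operatorname{PGL}_n(\mathbb Z)\) through a finite-index subgroup, and conclude with Lemma~\ref{lem:cyclic-escape-implies-directional-expansion}.

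The differences are only bookkeeping. The paper applies part (2) to \(\operatorname{PSL}_n(\mathbb Z)\) and lifts to \(\operatorname{SL}_n(\mathbb Z)\) with Lemma~\ref{lem:finite-kernel-transfer}; that is exactly how part (2) handles finite centre, so your direct application to \(\operatorname{SL}_n(\mathbb Z)\) is equivalent. Your parity split for \(\operatorname{PGL}_n(\mathbb Z)\) is unnecessary, since \(\operatorname{PSL}_n(\mathbb Z)\) has finite index in \(\operatorname{PGL}_n(\mathbb Z)\) for every \(n\).
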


%%%%%%%%%%%%%%%%%%%%%%%%%%%%%%%%%%%%%%%%%%%%%%%%%%%%%%%%%%%%%%%%%%%%%%

\subsection{Obstructions}
\label{subsec:obstructions}

The preceding positive results do not extend to arbitrary countable
groups. We record two obstructions, showing in particular that total
ergodicity does not imply directional expansivity in general.

\begin{theorem}[Dense subgroups in compact groups with thin cyclic subgroups]
\label{thm:compact-obstruction}
Let \(\Gamma\) be a countable dense subgroup of \(SU(2)\). Then the
translation action
\[
        \Gamma\curvearrowright (SU(2),m_{SU(2)})
\]
is totally ergodic but not directionally expansive. In particular,
\(\Gamma\) does not have the cyclic escape property. Consequently, every free group of rank at least two fails to have the
cyclic escape property.
\end{theorem}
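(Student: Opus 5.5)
We will prove the three assertions in turn: total ergodicity, failure of directional expansivity, and the consequences for the cyclic escape property and for free groups.

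\emph{Total ergodicity.} Let \(H\leq\Gamma\) have finite index. The closure \(\overline H\) is a closed subgroup of \(SU(2)\) of finite index in \(\overline\Gamma=SU(2)\). Since \(SU(2)\) is connected, \(\overline H=SU(2)\), so \(H\) is dense. A function in \(L^2(SU(2))\) invariant under a dense set of left translations is invariant under all of \(SU(2)\), because translation is strongly continuous on \(L^2\). Hence such a function is constant, and every finite-index subgroup acts ergodically.

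\emph{Failure of directional expansion.} The key observation is that every cyclic subgroup is thin: for \(g\in SU(2)\), the closure \(\overline{\langle g\rangle}\) lies in a maximal torus \(T_g\) (a conjugate of the diagonal circle) or is finite. Either way, \(\overline{\langle g\rangle}\) is contained in a one-dimensional closed subgroup. Take \(B\) to be a small closed ball \(B_\delta\) of radius \(\delta\) around the identity in a bi-invariant metric. Then
\[
        CB\subset \overline{\langle g\rangle}\,B_\delta\subset T_gB_\delta,
\]
which is the \(\delta\)-neighbourhood of a circle in the three-dimensional manifold \(SU(2)\). By bi-invariance, all tori are conjugate and the metric is conjugation-invariant, so \(\mu(T_gB_\delta)=\mu(T B_\delta)\) for the diagonal torus \(T\). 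This quantity is \(O(\delta^2)\), uniformly in \(g\). The finite case is covered as well, since a finite cyclic group also sits inside some maximal torus. Choosing \(\delta\) small, we get \(\sup_C\mu(CB)<1/2\) while \(\mu(B)>0\), so the action is not directionally expansive. This uniform estimate is the main point, and the uniformity comes from the conjugacy of maximal tori.

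\emph{Consequences.} By Lemma~\ref{lem:cyclic-escape-implies-directional-expansion} and total ergodicity, \(\Gamma\) cannot have the cyclic escape property. Note that \(\Gamma\) is infinite, being dense in a connected infinite group.

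For free groups, first take \(F_2\). Choose two elements \(a,b\in SU(2)\) generating a dense subgroup, for instance two generic rotations of \(SO(3)\) lifted; generic pairs generate free dense subgroups. Let \(\rho:F_n\to SU(2)\) be any homomorphism with dense image, for \(n\geq2\). This can be arranged by sending two generators to \(a,b\) and the rest to the identity.

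Now consider the pullback action \(F_n\curvearrowright SU(2)\) via \(\rho\). Finite-index subgroups of \(F_n\) map to finite-index subgroups of \(\rho(F_n)\), which are dense, so the pullback action is totally ergodic. Cyclic subgroups of \(F_n\) map to cyclic subgroups of \(SU(2)\), so the same ball \(B\) defeats directional expansion. Applying Lemma~\ref{lem:cyclic-escape-implies-directional-expansion} again, \(F_n\) fails the cyclic escape property. For countable rank the same construction works.
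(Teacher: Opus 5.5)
Your proof is correct. For the first two assertions it is essentially the paper's argument. A finite-index subgroup has closure of finite index, hence open, hence equal to the connected group $SU(2)$. A conjugation-invariant neighbourhood $B$ of the identity, together with the fact that every cyclic subgroup lies in a conjugate of a fixed maximal torus $T$, then gives the uniform bound $m_{SU(2)}(\langle\gamma\rangle B)\leq m_{SU(2)}(TB)$.

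Two small differences there:
\begin{itemize}
\item You bound $m_{SU(2)}(TB_\delta)$ by an explicit $O(\delta^2)$ tube estimate for a bi-invariant metric ball. The paper only uses that $T$ is Haar-null and continuity of measure along a shrinking neighbourhood basis, which needs no Riemannian input; either works.
\item You should state explicitly that a closed finite-index subgroup is open before invoking connectedness.
\end{itemize}

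The genuinely different step is the free-group consequence. The paper realizes a free group of rank at least two as a dense subgroup $\Gamma\leq SU(2)$, citing the existence of dense free subgroups. The final assertion is then literally a special case of the first. You instead pull the translation action back along a homomorphism $\rho:F_n\to SU(2)$ with dense image, sending two generators to a pair generating a dense subgroup and the others to the identity. Total ergodicity and the torus bound pass through $\rho$ unchanged, because finite-index subgroups of $F_n$ map to finite-index, hence dense, subgroups of $\rho(F_n)$, and cyclic subgroups map to cyclic subgroups.

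Your route has three advantages:
\begin{itemize}
\item It needs only a dense two-generated subgroup of $SU(2)$, not a free one.
\item It handles all ranks, including countably infinite rank, at once.
\item It shows that every countable group admitting a homomorphism with dense image in $SU(2)$ fails the cyclic escape property. This is the observation recorded in the remark following the paper's proof.
\end{itemize}
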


\begin{proof}
If \(H\leq \Gamma\) has finite index, then \(\overline H\) is a closed
finite-index subgroup of \(SU(2)\). Hence \(\overline H\) is open. Since
\(SU(2)\) is connected, it has no proper open subgroup, and therefore
\[
        \overline H=SU(2).
\]
Thus the restriction of the translation action to \(H\) is ergodic, and
the action is totally ergodic.

Let \(T\leq SU(2)\) be a maximal torus. Choose a conjugation-invariant
neighbourhood \(B\) of the identity such that
\[
        0<m_{SU(2)}(B)
        \qquad\text{and}\qquad
        m_{SU(2)}(TB)<1 .
\]
To see that this is possible, choose a decreasing
conjugation-invariant neighbourhood basis \(B_n\) of the identity with
\[
        \bigcap_n B_n=\{e\}.
\]
Then
\[
        TB_n\downarrow T,
\]
and hence
\[
        m_{SU(2)}(TB_n)\to m_{SU(2)}(T)=0,
\]
since \(T\) is Haar-null in \(SU(2)\).

Every element of \(SU(2)\) lies in a maximal torus, and all maximal
tori in \(SU(2)\) are conjugate. Hence, for every \(\gamma\in\Gamma\),
there is \(g\in SU(2)\) such that
\[
        \langle \gamma\rangle \leq gTg^{-1}.
\]
Since \(B\) is conjugation invariant and Haar measure is conjugation
invariant,
\[
        m_{SU(2)}(\langle\gamma\rangle B)
        \leq
        m_{SU(2)}(gTg^{-1}B)
        =
        m_{SU(2)}(TB).
\]
Taking
\[
        \varepsilon=1-m_{SU(2)}(TB)>0,
\]
no cyclic subgroup expands \(B\) to measure greater than
\(1-\varepsilon\). Thus the action is not directionally expansive.

The final assertion follows from the existence of dense free subgroups
of connected semisimple Lie groups, see for instance
\cite{Breuillard_Gelander_2003}.
\end{proof}

\begin{remark}
The same argument applies with \(SU(2)\) replaced by any compact
connected nonabelian Lie group \(K\). Indeed, every element of \(K\) is
contained in a maximal torus, all maximal tori are conjugate, and a
maximal torus is Haar-null in \(K\). Thus, if a countable group admits a
homomorphism with dense image in such a \(K\), then the corresponding
translation action on \(K\) is totally ergodic, not weakly mixing, and
not directionally expansive.
\end{remark}

\begin{theorem}[Bounded torsion quotients]
\label{thm:torsion-quotient-obstruction}
Let \(G\) be a countable discrete group admitting an infinite quotient
of bounded exponent. Then \(G\) admits a totally ergodic
probability-measure-preserving action which is not directionally
expansive. In particular, \(G\) does not have the cyclic escape property.
\end{theorem}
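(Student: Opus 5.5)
Let \(Q=G/N\) be an infinite quotient of bounded exponent \(m\). I would construct a totally ergodic action of \(Q\) and pull it back to \(G\) through the quotient map. Every finite-index subgroup of \(G\) maps onto a finite-index subgroup of \(Q\), so total ergodicity passes to \(G\). Every cyclic subgroup of \(G\) maps onto a cyclic subgroup of \(Q\) of order at most \(m\). Hence for any \(B\) and any cyclic \(C\leq G\) we have
\[
\mu(CB)\leq m\,\mu(B).
\]
Choosing \(B\) with \(0<\mu(B)<1/(2m)\) then gives \(\mu(CB)\leq 1/2\) for every cyclic \(C\), so the action is not directionally expansive. The failure of the cyclic escape property then follows from Lemma~\ref{lem:cyclic-escape-implies-directional-expansion}. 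The whole problem therefore reduces to one claim: every infinite countable group \(Q\) of bounded exponent admits a totally ergodic probability-measure-preserving action on a nonatomic space.

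For the action I take the Bernoulli shift \(Q\curvearrowright(\{0,1\}^Q,\nu^{\otimes Q})\) with \(\nu\) the uniform measure. The space is nonatomic because \(Q\) is infinite, so sets \(B\) of arbitrarily small positive measure exist. It is mixing for any infinite group. To get total ergodicity, I need ergodicity of the restriction to each finite-index subgroup \(H\leq Q\). Such an \(H\) is infinite, and the restriction of the shift to \(H\) is a Bernoulli shift of \(H\) with base \((\{0,1\}^{H\backslash Q},\nu^{\otimes})\). This is mixing, hence ergodic.

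To be safe without invoking Bernoulli theory, I would check ergodicity directly on cylinder sets. For cylinders \(A,A'\) depending on finite sets of coordinates \(F,F'\), and \(h\in H\) outside the finite set \(F'F^{-1}\), the sets \(hA\) and \(A'\) depend on disjoint coordinates, so
\[
\mu(hA\cap A')=\mu(A)\mu(A').
\]
Approximating an \(H\)-invariant set by cylinders and using that \(H\) is infinite then gives measure \(0\) or \(1\).

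The step I expect to need the most care is the bound \(\mu(CB)\leq m\mu(B)\). The point is that \(G\) acts through \(Q\), so \(cB\) depends only on the image of \(c\) in \(Q\). Thus \(CB\) is a union of at most \(|\overline C|\leq m\) translates of \(B\), each of measure \(\mu(B)\). Here \(\overline C\) is the image of \(C\) in \(Q\), a cyclic subgroup of exponent at most \(m\). This holds for finite and infinite cyclic \(C\) alike, matching the convention that \(C\) may be finite in Definition~\ref{def:directionally-expansive-action}. The rest is routine.
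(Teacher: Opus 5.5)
Your proposal is correct and follows essentially the same route as the paper: pull the Bernoulli shift of the bounded-exponent quotient \(Q\) back to \(G\), and get total ergodicity from the fact that finite-index subgroups of \(G\) map onto infinite finite-index subgroups of \(Q\). Then bound \(\mu(CB)\) by the order of the image of \(C\) in \(Q\) times \(\mu(B)\), hence by \(m\,\mu(B)\). The differences are cosmetic: you check ergodicity of the restricted shift directly on cylinder sets rather than citing mixing, and you take \(\mu(B)<1/(2m)\) where the paper takes \(\mu(B)<1/m\).
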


\begin{proof}
Let
\[
        q:G\to Q
\]
be an infinite quotient of bounded exponent, and let \(m\) bound the
orders of elements of \(Q\). Let \(Q\) act on its Bernoulli shift
\[
        (X,\mu)=(\{0,1\}^Q,(1/2,1/2)^Q),
\]
and view this as a \(G\)-action through \(q\).

The Bernoulli shift of \(Q\) is mixing, and its restriction to every
infinite subgroup of \(Q\) is ergodic. If \(H\leq G\) has finite index,
then \(q(H)\) has finite index in \(Q\), and hence is infinite. Thus the
restriction of the \(G\)-action to \(H\) is ergodic. Therefore the
\(G\)-action is totally ergodic.

Choose \(B\subset X\) with
\[
        0<\mu(B)<\frac{1}{m}.
\]
For every cyclic subgroup \(C\leq G\),
\[
        \mu(CB)
        =
        \mu(q(C)B)
        \leq
        |q(C)|\mu(B)
        \leq
        m\mu(B)
        <1 .
\]
Hence the action is not directionally expansive.
\end{proof}

By work of Ivanov--Olshanskii, in the form used by Minasyan,
Olshanskii and Sonkin~\cite{Minasyan_Olshanskii_Sonkin_2009}, every
non-elementary hyperbolic group has an infinite quotient of bounded
exponent. Hence non-elementary hyperbolic groups do not have the cyclic
escape property.

These examples show that the cyclic escape property is a genuine
restriction on the acting group. In particular, the nilpotent and
higher-rank positive results cannot be extended to all countable groups.

%%%%%%%%%%%%%%%%%%%%%%%%%%%%%%%%%%%%%%%%%%%%%%%%%%%%%%%%%%%%%%%

\subsection{Finite cyclic directions}
\label{subsec:finite-cyclic-directions}

Directional expansivity allows finite cyclic subgroups, while the cyclic
escape property uses only infinite cyclic subgroups. Thus torsion groups
fall outside the scope of the cyclic escape criterion but not outside
the original dynamical question. A natural test case is the infinite
alternating group
\[
        A_\infty=\bigcup_{n\geq 1}A_n,
\]
under the natural inclusions, which is simple and contains finite cyclic subgroups of arbitrarily large
order.

\begin{question}
Does every totally ergodic probability-measure-preserving action of
\(A_\infty\) have directional expansivity?
\end{question}

Equivalently, one may ask whether a torsion analogue of the cyclic
escape property holds for \(A_\infty\), with finite cyclic subgroups of
large order replacing infinite cyclic subgroups. Thoma's character
theorem for \(S_\infty\)~\cite{Thoma_1964} suggests a possible
representation-theoretic approach, analogous in spirit to the
character-theoretic part of the proof of
Theorem~\ref{thm:higher-rank-lattices}.

%%%%%%%%%%%%%%%%%%%%%%%%%%%%%%%%%%%%%%%%%%%%%%%%%%%%%%%%%%%%%%%%%%%%%%

\subsection{Acknowledgments}
M.B. was supported by the Swedish Research Council VR 11253322. A.F. 
would like to thank the Australian Research Council for support through the
grant DP240100472.

\subsection{Use of AI tools.}
During the preparation of this manuscript, the authors used ChatGPT for
exploratory discussion, proof checking, organization, and drafting
assistance. The Aristotle API was used to a lesser extent for checking
selected arguments. The authors reviewed all AI-generated output and take
full responsibility for the mathematical content and final text.

%%%%%%%%%%%%%%%%%%%%%%%%%%%%%%%%%%%%%%%%%%%%%%%%%%%%%%%%%%%%%%%%%%%%%%

\setcounter{tocdepth}{2}
\tableofcontents

%%%%%%%%%%%%%%%%%%%%%%%%%%%%%%%%%%%%%%%%%%%%%%%%%%%%%%%%%%%%%%%%%%%%%%
%%%%%%%%%%%%%%%%%%%%%%%%%%%%%%%%%%%%%%%%%%%%%%%%%%%%%%%%%%%%%%%%%%%%%%
%%%%%%%%%%%%%%%%%%%%%%%%%%%%%%%%%%%%%%%%%%%%%%%%%%%%%%%%%%%%%%%%%%%%%%
%%%%%%%%%%%%%%%%%%%%%%%%%%%%%%%%%%%%%%%%%%%%%%%%%%%%%%%%%%%%%%%%%%%%%%
%%%%%%%%%%%%%%%%%%%%%%%%%%%%%%%%%%%%%%%%%%%%%%%%%%%%%%%%%%%%%%%%%%%%%%

\section{Nilpotent reductions}

Throughout this section, all representations are unitary representations
on complex Hilbert spaces. If \((V,\pi)\) is a representation of a group
\(G\) and \(H\leq G\), we write \(P_H\) for the orthogonal projection
onto \(V^H\).

\subsection{Finite-index reductions}

The cyclic escape property is stable under passage between a group and
its infinite finite-index subgroups. We shall use both directions.

\begin{lemma}
\label{lem:finite-index-overgroup-cyclic-escape}
Let \(G\) be an infinite countable group, and let \(H\leq G\) be an
infinite finite-index subgroup. If \(H\) has the cyclic escape property,
then \(G\) has the cyclic escape property.
\end{lemma}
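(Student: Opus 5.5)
The plan is to restrict the representation to \(H\) and check that the restriction is still totally ergodic. Infinite cyclic subgroups of \(H\) are infinite cyclic subgroups of \(G\), so the conclusion for \(H\) transfers directly to \(G\).

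Let \((V,\pi)\) be a totally ergodic unitary representation of \(G\), and consider its restriction \(\pi|_H\) to \(H\).

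\emph{Total ergodicity of the restriction.} Let \(K\leq H\) be a finite-index subgroup of \(H\). Since \([G:K]=[G:H][H:K]<\infty\), the subgroup \(K\) also has finite index in \(G\). The fixed space \(V^K\) does not depend on whether \(K\) is viewed inside \(H\) or inside \(G\). Total ergodicity of \(\pi\) therefore gives \(V^K=\{0\}\). Hence \((V,\pi|_H)\) is a totally ergodic representation of \(H\).

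\emph{Transfer of the conclusion.} Fix \(v\in V\). Since \(H\) is infinite and has the cyclic escape property, for every \(\varepsilon>0\) there is an infinite cyclic subgroup \(C\leq H\) with \(\|P_Cv\|<\varepsilon\). Here \(P_C\) is the projection onto \(V^C\), and this is the same operator whether computed for \(\pi|_H\) or for \(\pi\). Since \(C\) is also an infinite cyclic subgroup of \(G\), the infimum over infinite cyclic subgroups of \(G\) is at most the infimum over those of \(H\). That infimum is \(0\), so \(\inf_C\|P_Cv\|=0\) in \(G\) as well.

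The only step needing any care is the transitivity of finite index, which is elementary. The hypothesis that \(G\) is infinite is automatic from \(H\) being infinite, and is needed only to match the definition. The converse direction, passing from \(G\) down to \(H\), would be the genuinely harder one, since a totally ergodic representation of \(H\) must then be induced up to \(G\); it is not required here.
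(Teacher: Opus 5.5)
Your proposal is correct and is essentially the paper's own proof. Both restrict \(\pi\) to \(H\), note that finite-index subgroups of \(H\) have finite index in \(G\) (so the restriction is totally ergodic), and observe that the infinite cyclic subgroups of \(H\) given by its cyclic escape property are also infinite cyclic subgroups of \(G\).
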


\begin{proof}
Let \((V,\pi)\) be a totally ergodic unitary representation of \(G\).
The restricted representation of \(H\) is totally ergodic. Indeed, if
\(M\leq H\) has finite index, then \(M\) also has finite index in \(G\),
and therefore \(V^M=\{0\}\).

Let \(v\in V\) and let \(\varepsilon>0\). Since \(H\) has the cyclic
escape property, there exists an infinite cyclic subgroup \(C\leq H\)
such that
\[
        \|P_Cv\|<\varepsilon .
\]
Since \(C\) is also an infinite cyclic subgroup of \(G\), this proves
the claim.
\end{proof}

\begin{lemma}
\label{lem:finite-index-subgroup-cyclic-escape}
Let \(G\) be an infinite countable group, and let \(H\leq G\) be an
infinite finite-index subgroup. If \(G\) has the cyclic escape property,
then \(H\) has the cyclic escape property.
\end{lemma}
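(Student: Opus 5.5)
Given a totally ergodic representation \((W,\rho)\) of \(H\), I will pass to the induced representation \(V=\operatorname{Ind}_H^G W\) and transfer the cyclic escape property of \(G\) back to \(H\). Concretely, fix coset representatives \(g_1=e,g_2,\dots,g_k\) for \(G/H\) and realise \(V\) as \(\bigoplus_{i} \pi(g_i)W\), with \(W\) embedded as the summand for the trivial coset. The plan has three steps.

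\emph{Total ergodicity of \(V\).} I first check that \(V\) is a totally ergodic representation of \(G\). Let \(M\leq G\) have finite index, and replace \(M\) by its normal core, which still has finite index and is contained in \(H\). A vector fixed by this core has each coset component fixed by the core as well. For a normal subgroup \(N\leq H\), the component in \(\pi(g_i)W\) corresponds to a vector of \(W\) fixed by \(g_i^{-1}Ng_i\), which again has finite index in \(H\). Total ergodicity of \(W\) makes this vector zero, so \(V^M=\{0\}\).

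\emph{Applying the property in \(G\).} Take \(w\in W\subset V\) and \(\varepsilon>0\). The cyclic escape property of \(G\) gives an infinite cyclic subgroup \(C=\langle c\rangle\leq G\) with \(\|P_C w\|<\varepsilon\). The element \(c\) need not lie in \(H\). However, some power \(c^n\) with \(1\le n\le k!\) does lie in \(H\): the left multiplication action of \(c\) on \(G/H\) has order dividing \(k!\), and \(H\) is fixed by \(c^n\). The group \(C'=\langle c^{k!}\rangle\) is then infinite cyclic, contained in \(H\), and satisfies \(V^{C}\subseteq V^{C'}\). This inclusion goes the wrong way for us.

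\emph{Controlling \(P_{C'}\).} This is the main obstacle: passing to a finite-index subgroup of \(C\) can enlarge the fixed space. To overcome it, I will apply the escape property to a family of vectors rather than to \(w\) alone. Decompose \(V^{C'}\) under the finite cyclic group \(C/C'\) of order \(m=k!\). Then \(P_{C'}=\sum_{\chi}P^{\chi}\), where the sum runs over the \(m\)-th roots of unity \(\chi\), and \(P^\chi\) projects onto the vectors with \(\pi(c)u=\chi u\).

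Each \(\chi\)-eigenspace of \(c\) is the \(C\)-fixed space of the twisted representation \(\pi\otimes\overline{\chi}\). That is not a representation of \(G\). Instead I will use \(\pi\otimes\operatorname{Ind}\) of a finite quotient, but this is awkward. A cleaner route is to apply the escape property of \(G\) to the representation \(V\otimes \ell^2(G/N)_0\)-type twists, which seems messy too.

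So I would first try the simplest alternative. Choose \(C\) with \(\|P_C u\|\) small simultaneously for the finitely many vectors \(u=\pi(g)w\), with \(g\) ranging over representatives of \(G/N\), where \(N\) is the normal core of \(H\). Since \(c^m\in N\), the vector \(P_{C'}w\) is the average \(\frac1m\sum_{j<m}\pi(c^j)P_{C'}w\) plus eigencomponents. I expect to need a genuinely different idea here, for instance an approach via the induced spectral data. Failing that, I would reduce to showing that the escape property of \(G\) yields smallness of \(P_{\langle c^m\rangle}\) once \(V\) is totally ergodic, for example by applying the property to \(V\) restricted along a diagonal embedding into \(V^{\oplus m}\).
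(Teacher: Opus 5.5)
There is a genuine gap: your proposal stops at exactly the step that needs an idea. Your first step is fine, with one correction. The normal core of $M$ need not lie in $H$, so take the normal core of $M\cap H$ instead; this suffices because $V^M\subseteq V^{M'}$ for $M'\leq M$. Applying the cyclic escape property of $G$ to $w\in W\subset V$ is also fine. The third step, however, is never carried out. The eigenspace decomposition of $V^{C'}$, the twisted representations, simultaneous smallness for translates of $w$, and the diagonal embedding are only sketched, and none of them produces an infinite cyclic $D\leq H$ with $\|P_D w\|$ small in $W$. As written, the lemma is not proved.

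The missing idea is that the comparison $V^C\subseteq V^{C'}$ inside $V$ is the wrong one. What you need to control is the $D$-fixed space \emph{in $W$}, for $D=C\cap H$. Because $w$ is supported on the single coset $H$, this space maps into $V^C$, which is the favourable direction. Let $m$ be the length of the $C$-orbit of the coset $H$ in $G/H$. Then $m\leq k$, the cosets $c^jH$ for $0\leq j<m$ are distinct, and $D=\langle c^m\rangle$ is infinite cyclic. Put $\eta_0=P^W_Dw\in W^D$ and
\[
\eta=\sum_{j=0}^{m-1}\pi(c^j)\eta_0 .
\]
First, $\pi(c)\eta=\eta-\eta_0+\pi(c^m)\eta_0=\eta$, so $\eta\in V^C$. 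Second, the terms lie in the pairwise orthogonal summands for the cosets $c^jH$, so $\|\eta\|=\sqrt{m}\,\|\eta_0\|$. Third, only the $j=0$ term lies in the summand $W$ containing $w$, so $\langle w,\eta\rangle=\|\eta_0\|^2$. Hence
\[
\|P_Cw\|\geq\frac{|\langle w,\eta\rangle|}{\|\eta\|}=\frac{\|P^W_Dw\|}{\sqrt{m}},
\qquad\text{so}\qquad
\|P^W_Dw\|\leq\sqrt{k}\,\|P_Cw\|<\sqrt{k}\,\varepsilon ,
\]
which completes the proof.

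In fact the equality $\|P_Cw\|^2=\frac1m\|P^W_Dw\|^2$ holds, and this is how the paper phrases it. The paper first replaces $H$ by its normal core and returns to $H$ via Lemma~\ref{lem:finite-index-overgroup-cyclic-escape}, but the computation above works for $H$ directly. You must use $D=C\cap H$ itself rather than $\langle c^{k!}\rangle$. The latter can have a strictly larger fixed space in $W$, so your $C'$ would not give the estimate.
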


\begin{proof}
Let
\[
        K=\bigcap_{g\in G}gHg^{-1}
\]
be the normal core of \(H\). Then \(K\triangleleft G\), \(K\leq H\),
and \(K\) has finite index in \(G\). It is enough, by
Lemma~\ref{lem:finite-index-overgroup-cyclic-escape}, to prove that
\(K\) has the cyclic escape property.

Let \((V,\pi)\) be a totally ergodic unitary representation of \(K\).
Consider the induced representation
\[
        \Pi=\operatorname{Ind}_K^G\pi .
\]
Since \(K\) has finite index in \(G\), we may realize this representation
on
\[
        W=\bigoplus_{r\in R}V,
\]
where \(R\subset G\) is a set of representatives for \(G/K\).

We first check that \(\Pi\) is totally ergodic as a representation of
\(G\). Let \(M\leq G\) be finite index. The restriction of \(\Pi\) to
\(M\) decomposes over the finite \(M\)-orbits in \(G/K\). If \(rK\) is
one such coset, its stabilizer in \(M\) acts on the corresponding fibre
through the subgroup
\[
        r^{-1}Mr\cap K
\]
of \(K\). This subgroup has finite index in \(K\). Since \(\pi\) is
totally ergodic, it has no nonzero fixed vectors under
\(r^{-1}Mr\cap K\). Hence no \(M\)-orbit contributes \(M\)-fixed
vectors, and therefore
\[
        W^M=\{0\}.
\]
Thus \(\Pi\) is totally ergodic.

Now fix \(v\in V\), and let \(\xi\in W\) be the vector supported on the
coset \(K\) with value \(v\). Since \(G\) has the cyclic escape
property, for every \(\delta>0\) there exists an infinite cyclic
subgroup
\[
        C=\langle g\rangle\leq G
\]
such that
\[
        \|P_C\xi\|<\delta .
\]
Set
\[
        D=C\cap K .
\]
Since \(K\) has finite index in \(G\), the subgroup \(D\) has finite
index in \(C\), and hence is infinite cyclic.

Let \(m=[C:D]\). Equivalently, \(m\) is the length of the \(C\)-orbit of
the coset \(K\) in \(G/K\). On this orbit, the \(C\)-fixed vectors are
determined by their value at the coset \(K\), and this value must lie in
\(V^D\). Therefore the orthogonal projection of \(\xi\) onto the
\(C\)-fixed vectors supported on this orbit has norm
\[
        \frac{1}{\sqrt m}\|P_Dv\|.
\]
Since \(\xi\) is supported on this orbit, we get
\[
        \|P_C\xi\|^2=\frac{1}{m}\|P_Dv\|^2 .
\]
Thus
\[
        \|P_Dv\|
        =
        \sqrt m\,\|P_C\xi\|
        \leq
        \sqrt{[G:K]}\,\|P_C\xi\|.
\]
Choosing \(\delta\) arbitrarily small gives
\[
        \inf_D\|P_Dv\|=0,
\]
where \(D\) ranges over infinite cyclic subgroups of \(K\). Hence \(K\)
has the cyclic escape property. Therefore \(H\) has the cyclic escape
property.
\end{proof}

%%%%%%%%%%%%%%%%%%%%%%%%%%%%%%%%%%%%%%%%%%%%%%%%%%%%%%%%%%%%%%%%%%

\subsection{The lower central series}

We shall prove the cyclic escape property for finitely
generated nilpotent groups by induction on the nilpotency class. Recall
that the lower central series of a group $G$ is defined by
$\gamma_1(G)=G$ and $\gamma_{i+1}(G)=[G,\gamma_i(G)]$. If $G$ is
nilpotent of class $r$, then $\gamma_{r+1}(G)=\{e\}$ and
$\gamma_r(G)$ is a nontrivial central subgroup of $G$.

\begin{lemma}\label{lem:last-lower-central}
Let $G$ be a finitely generated nilpotent group of class $r\geq1$, and
put $Z=\gamma_r(G)$. Then $Z$ is a finitely generated abelian central
subgroup of $G$, and $G/Z$ is finitely generated nilpotent of class at
most $r-1$.
\end{lemma}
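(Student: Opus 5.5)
The plan is to verify the three assertions of Lemma~\ref{lem:last-lower-central} directly from the definition of the lower central series, using two standard facts. First, $\gamma_{i+1}(G)=[G,\gamma_i(G)]$. Second, finitely generated nilpotent groups are Noetherian, meaning every subgroup is finitely generated.

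\emph{Centrality.} Since $G$ has class $r$, we have $\gamma_{r+1}(G)=[G,\gamma_r(G)]=\{e\}$. Hence every element of $Z=\gamma_r(G)$ commutes with every element of $G$, so $Z$ is central. In particular $Z$ is abelian, and it is normal in $G$, so the quotient $G/Z$ makes sense.

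\emph{Finite generation.} I would invoke the standard fact that finitely generated nilpotent groups satisfy the maximal condition on subgroups. This is proved by induction along the lower central series. Each factor $\gamma_i(G)/\gamma_{i+1}(G)$ is abelian and is generated by the images of iterated commutators of the finitely many generators of $G$, so it is finitely generated. Subgroups of finitely generated abelian groups are finitely generated, and extensions of Noetherian groups by Noetherian groups are Noetherian. Applied to $Z$, this shows $Z$ is finitely generated. Alternatively, one can see it directly: $Z=\gamma_r(G)/\gamma_{r+1}(G)$ is spanned by finitely many $r$-fold commutators of generators, by multilinearity of commutators modulo $\gamma_{r+1}(G)$. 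This direct route avoids the general Noetherian fact.

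\emph{The quotient.} $G/Z$ is finitely generated, being a quotient of $G$. Since images of commutator subgroups are commutator subgroups of the images, we have $\gamma_i(G/Z)=\gamma_i(G)Z/Z$. For $i=r$ this gives $\gamma_r(G/Z)=\gamma_r(G)Z/Z$, which is trivial. Hence $G/Z$ is nilpotent of class at most $r-1$. When $r=1$, this says $G/Z$ is trivial, which is consistent: $G=Z$ is abelian.

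The only step that is not immediate is the finite generation of $Z$. I expect to handle it by the multilinearity argument, citing the standard commutator identities.
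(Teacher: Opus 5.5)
Your proposal is correct and follows essentially the same route as the paper. Centrality comes from $\gamma_{r+1}(G)=[G,\gamma_r(G)]=\{e\}$, finite generation of $Z$ from the fact that subgroups of finitely generated nilpotent groups are finitely generated (the paper simply cites Hall's theorem), and the class bound from $\gamma_i(G/Z)=\gamma_i(G)Z/Z$. Your alternative multilinearity argument is also valid and makes that step self-contained: it shows that $\gamma_r(G)=\gamma_r(G)/\gamma_{r+1}(G)$ is generated by the finitely many $r$-fold commutators of generators.
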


\begin{proof}
Since $\gamma_{r+1}(G)=\{e\}$, we have
$[G,\gamma_r(G)]=\{e\}$. Thus $Z=\gamma_r(G)$ is central in $G$, and in
particular abelian.

Subgroups of finitely generated nilpotent groups are finitely generated
by Hall's theorem; see \cite[Section~1C]{Segal_1983}. The quotient $G/Z$ is finitely generated,
and its lower central series is the image of the lower central series of
$G$. Since the image of $\gamma_r(G)$ is trivial in $G/Z$, the quotient
has nilpotency class at most $r-1$.
\end{proof}

Thus the induction step will be carried out for central extensions
$1\to Z\to G\to G/Z\to 1$, where $Z$ is finitely generated abelian and
central.

%%%%%%%%%%%%%%%%%%%%%%%%%%%%%%%%%%%%%%%%%%%%%%%%%%%%%%%%%%%%%%%%%%

\subsection{The central induction step}

The proof of Theorem~\ref{thm:main} will be reduced to
two representation-theoretic statements. The first is the abelian base
case.

\begin{proposition}\label{prop:abelian-base}
Every infinite finitely generated abelian group has the cyclic escape property.
\end{proposition}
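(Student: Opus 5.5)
By Lemma~\ref{lem:finite-index-overgroup-cyclic-escape} and the structure theorem, an infinite finitely generated abelian group contains a finite-index subgroup isomorphic to \(\mathbb Z^d\) with \(d\geq 1\). It therefore suffices to treat \(G=\mathbb Z^d\). Let \((V,\pi)\) be a totally ergodic unitary representation and fix \(v\in V\).

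The plan is to work with the spectral measure. By the spectral theorem there is a finite positive Borel measure \(\sigma_v\) on \(\widehat{\mathbb Z^d}=\mathbb T^d\) with
\[
\langle \pi(n)v,v\rangle=\int_{\mathbb T^d}\chi(n)\,d\sigma_v(\chi).
\]
For \(C=\langle n\rangle\) with \(n\neq 0\), the von Neumann ergodic theorem (that is, Fejér or Cesàro averages along \(C\)) identifies \(P_C\) with the spectral projection onto the set \(\{\chi:\chi(n)=1\}\). This gives
\[
\|P_Cv\|^2=\sigma_v(\{\chi\in\mathbb T^d:\chi(n)=1\}).
\]
Writing \(\chi=e^{2\pi i\langle \theta,\cdot\rangle}\) with \(\theta\in\mathbb R^d/\mathbb Z^d\), the set in question is the closed subgroup \(S_n=\{\theta:\langle\theta,n\rangle\in\mathbb Z\}\). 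This is a finite union of parallel codimension-one subtori.

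Next I use total ergodicity to exclude mass at rational points. A point \(\theta\in\mathbb Q^d/\mathbb Z^d\) of order \(q\) is trivial on the finite-index subgroup \(q\mathbb Z^d\). An atom of \(\sigma_v\) at \(\theta\) would produce a nonzero eigenvector which is fixed by \(q\mathbb Z^d\), contradicting \(V^{q\mathbb Z^d}=\{0\}\). Hence \(\sigma_v\) has no atoms at torsion characters. More generally, for each finite-index subgroup \(H\), the projection \(P_H\) equals the spectral projection onto the annihilator \(H^\perp\), which is a finite set of torsion characters; so \(\sigma_v(H^\perp)=0\) for every \(H\).

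The main step, and the expected obstacle, is to choose \(n\) so that \(\sigma_v(S_n)\) is small. Atoms at irrational points, and mass carried by rational subtori of positive dimension, are not excluded. So I cannot just choose a fixed \(n\) and hope. I would argue by averaging. Put \(n=qm\) where \(q=N!\) with \(N\) large, and \(m\) ranges over a large box \(\{1,\dots,M\}^d\setminus\{0\}\). Then
\[
\frac{1}{M^d}\sum_{m}\sigma_v(S_{qm})=\int \frac{\#\{m:\langle q\theta,m\rangle\in\mathbb Z\}}{M^d}\,d\sigma_v(\theta).
\]
If \(q\theta\neq 0\) in \(\mathbb T^d\), then the characters \(m\mapsto e^{2\pi i\langle q\theta,m\rangle}\) equal \(1\) for only a proportion of \(m\) tending to \(0\) as \(M\to\infty\). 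Indeed, the set \(\{m:\langle q\theta,m\rangle\in\mathbb Z\}\) is a subgroup of \(\mathbb Z^d\) of infinite index, or else \(q\theta\) would be rational, and more precisely the proportion of a proper subgroup in large boxes tends to its density, which is \(0\) for infinite-index subgroups and \(1/k\) for index \(k\). To handle the index-\(k\) case, I take \(N\) large so that \(q\theta=0\) captures all rational \(\theta\) of small order. The set \(\{\theta:q\theta=0\}=(q\mathbb Z^d)^\perp\) has \(\sigma_v\)-measure \(0\). Rational \(\theta\) of large order \(k\) yield density roughly \(1/k\), which is small. Irrational \(\theta\) yield density \(0\).

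By dominated convergence, letting \(M\to\infty\) and then \(N\to\infty\) makes the average arbitrarily small. Hence some \(m\) gives \(\|P_{\langle qm\rangle}v\|^2<\varepsilon\). A little care is needed for the density of subgroups along boxes and for ordering the limits in \(N\) and \(M\). This is the delicate bookkeeping, but it is routine Fejér-type estimation.
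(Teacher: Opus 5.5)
Your argument is correct and is essentially the paper's proof. You average \(\sigma_v(\Ann(n))\) over a box, use that a non-torsion character has an infinite-index kernel of zero density in boxes, apply dominated convergence, and use total ergodicity to remove the periodic spectrum.

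The multiplier \(q=N!\) and the double limit are unnecessary. You already showed \(\sigma_v(H^\perp)=0\) for every finite-index \(H\), and the torsion characters are the countable union \(\bigcup_N (N!\mathbb Z^d)^\perp\), so \(\sigma_v(\Per(\mathbb Z^d))=0\) and the plain average with \(q=1\) already tends to \(0\), exactly as in the paper. This matters because your remark that rational \(\theta\) of large order \(k\) contribute density roughly \(1/k\) is false. The density is \(1/\operatorname{ord}(q\theta)\), which equals \(1/2\) for \(\theta\) of order \(2^{v_2(N!)+1}\) in dimension one. The error is harmless only because those \(\theta\) carry no \(\sigma_v\)-mass, so the step should be dropped rather than repaired.
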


The abelian base case is essentially the representation-theoretic form
of the directional expansion theorem for totally ergodic
$\mathbb Z^r$-actions proved by Björklund and Fish
\cite[Corollary 1.9]{Bjorklund_Fish_2024}. We include the short proof in the language of
unitary representations, since this is the form needed for the induction.

The second is the induction step over a central subgroup.

\begin{proposition}\label{prop:central-induction-step}
Let
\[
1\longrightarrow Z\longrightarrow G\xrightarrow{\,p\,}Q\longrightarrow 1
\]
be an exact sequence, where $Z$ is a finitely generated abelian central
subgroup of $G$. Suppose that $G$ is an infinite finitely generated
nilpotent group, that $Q$ is infinite, and that every infinite
finite-index subgroup of $Q$ has the cyclic escape property.
Then $G$ has the cyclic escape property.
\end{proposition}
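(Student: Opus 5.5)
Let \((V,\pi)\) be a totally ergodic representation of \(G\), fix \(v\in V\) and \(\varepsilon>0\). Since \(Z\) is central, the projection-valued spectral measure \(E\) of \(\pi|_Z\) on \(\widehat Z\) commutes with \(\pi(G)\). Let \(T\subset\widehat Z\) be the countable set of torsion characters, i.e.\ characters trivial on some finite-index subgroup of \(Z\). We split \(v=v_{\mathrm{per}}+v_{\mathrm{ap}}\) with \(v_{\mathrm{per}}=E(T)v\). Both pieces lie in \(G\)-invariant subspaces, and \(\|P_Cv\|\le\|P_Cv_{\mathrm{per}}\|+\|P_Cv_{\mathrm{ap}}\|\). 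So it suffices to find one infinite cyclic \(C\) making both terms small. Throughout we choose \(C\) inside a suitable finite-index subgroup \(G_0\le G\), and we choose \(C\) so that it meets \(Z\) trivially or in a controlled way.

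\emph{Periodic part.} Approximate \(v_{\mathrm{per}}\) within \(\varepsilon\) by \(E(F)v\) for a finite set \(F\subset T\). All characters in \(F\) are trivial on a finite-index subgroup \(Z_1\le Z\), which is central and hence normal in \(G\). Then \(w=E(F)v\) lies in \(V^{Z_1}\), a \(G\)-invariant subspace on which \(G\) acts through \(G_1=G/Z_1\). Now \(G_1\) is a finite extension, by the finite central group \(Z/Z_1\), of \(Q\). Let \(Q'\le Q\) be a finite-index subgroup lifting to a subgroup \(G'\le G_1\) with \(G'\cap Z/Z_1\) trivial. Such a subgroup exists because the extension class is torsion and \(Q\) is finitely generated nilpotent, hence residually finite; if needed, one can instead use that a finitely generated nilpotent group with finite central subgroup has a finite-index torsion-free subgroup avoiding it. Then \(G'\cong Q'\). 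The representation \(V^{Z_1}\) restricted to \(G'\) is totally ergodic, since finite-index subgroups of \(G'\) pull back to finite-index subgroups of \(G\). By hypothesis \(Q'\) has the cyclic escape property, so there is an infinite cyclic \(C'\le G'\) with \(\|P_{C'}w\|<\varepsilon\). Lifting a generator to \(G\) gives \(C\) with \(\|P_Cw\|=\|P_{C'}w\|\). Moreover, we can take \(C\) inside any prescribed finite-index subgroup, by passing to a finite-index subgroup of \(Q'\), which still has the property.

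\emph{Aperiodic part.} The measure \(\mu=\langle E(\cdot)v_{\mathrm{ap}},v_{\mathrm{ap}}\rangle\) gives no mass to \(T\). For a finite-index subgroup \(Z_2\le Z\), the mass \(\mu(Z_2^{\perp})\) tends to \(0\) along a cofinal sequence \(Z_2\downarrow\), because \(\bigcap Z_2^\perp\) corresponds to characters trivial on all finite-index subgroups, and any character lying in some \(Z_2^\perp\) is torsion. The key mechanism is the following. If \(C=\langle g\rangle\) and \(g^k\in Z\) with \(\langle g^k\rangle=D\), then \(\|P_Cv_{\mathrm{ap}}\|\le\|P_Dv_{\mathrm{ap}}\|=\mu(D^\perp)^{1/2}\). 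This is where I expect the main obstacle: the cyclic subgroup produced for the periodic part need not meet \(Z\) at all. The fallback is to work in an abelian-style way inside the subgroup \(A=\langle g,Z\rangle\), which is abelian because \(Z\) is central. Replacing \(g\) by \(gz\) for \(z\in Z_1\) does not change its action on \(V^{Z_1}\), so the periodic estimate persists. Meanwhile, the spectral measure of \(v_{\mathrm{ap}}\) over \(\widehat A\) has no mass on characters whose restriction to \(Z\) is torsion. A Fejér-type argument, as in Proposition~\ref{prop:abelian-base} applied to \(A\) and its infinite cyclic subgroups \(\langle g^nz\rangle\) with \(z\in Z_1\) and \(n\) large, should then make \(\|P_{\langle g^n z\rangle}v_{\mathrm{ap}}\|\) small. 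Here we choose \(z\) generic, so that the hyperplane-type sets \(\{\chi:\chi(g^nz)=1\}\) carry small \(\mu\)-mass. For the periodic part we still need \(g^nz\) to work; so we apply the hypothesis to the finite-index subgroup of \(Q'\) and use the freedom in \(n\). Combining the two estimates gives \(\|P_Cv\|<3\varepsilon\), as required.
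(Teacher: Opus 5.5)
Your periodic step is fine and matches the paper. Finding a finite-index $G'\le G/Z_1$ with $G'\cap Z/Z_1$ trivial is exactly what Lemma~\ref{lem:finite-kernel-transfer} accomplishes. You also correctly note that replacing $g$ by $gz$ with $z\in Z_1$ leaves the projection of $w\in V^{Z_1}$ unchanged and keeps $gz$ of infinite order.

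\textbf{The gap.} It lies in the aperiodic step, at the point where both estimates must hold for one cyclic group. You pass to $\langle g^nz\rangle$ with $n$ large and propose to ``use the freedom in $n$'' for the periodic part. That is not an argument, and it fails as stated:
\begin{itemize}
\item The periodic bound was proved for $\langle g\rangle$. Since $V^{\langle g\rangle}\subseteq V^{\langle g^n\rangle}$, you only get $\|P_{\langle g^n\rangle}w\|\geq\|P_{\langle g\rangle}w\|$, which may be large.
\item Re-choosing $g$ via the hypothesis to suit $n$ is circular. The spectral measure on $\widehat A$, and hence the $n$ you need, depends on $g$.
\item Running Lemma~\ref{lem:abelian-direction-selection} on $A$ produces some nonzero $g^nz$ with no control over $n$. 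It therefore says nothing about the periodic part.
\end{itemize}
A minor point: $\mu(Z_2^\perp)$ is simply $0$ for every finite-index $Z_2$, since $Z_2^\perp\subseteq T$. Also, annihilators grow, not shrink, as $Z_2$ decreases.

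\textbf{The fix.} No $n$ is needed: keep $g$ fixed and vary only $z\in Z_1$. Put $A=\langle g\rangle Z$, which is abelian since $Z$ is central, and let $\nu$ be the spectral measure of $v_{\mathrm{ap}}$ for $\pi|_A$. Then $\|P_{\langle gz\rangle}v_{\mathrm{ap}}\|^2=\nu(\{\chi:\chi(gz)=1\})$. Hence, for a F\o lner sequence $(F_R)$ of $Z_1$,
\[
\frac{1}{|F_R|}\sum_{z\in F_R}\|P_{\langle gz\rangle}v_{\mathrm{ap}}\|^2
=\int_{\widehat A}\frac{|\{z\in F_R:\chi(z)=\overline{\chi(g)}\}|}{|F_R|}\,d\nu(\chi).
\]
Now argue as follows.
\begin{itemize}
\item Under restriction $\widehat A\to\widehat Z$, $\nu$ pushes forward to $\mu$. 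So for $\nu$-a.e.\ $\chi$, the restriction $\chi|_{Z_1}$ has infinite order.
\item For such $\chi$, the set $\{z\in Z_1:\chi(z)=\overline{\chi(g)}\}$ is empty or a coset of the infinite-index subgroup $\ker(\chi|_{Z_1})$. It therefore has zero density along $(F_R)$.
\item Dominated convergence sends the average to $0$. This gives $z\in Z_1$ with $\|P_{\langle gz\rangle}v_{\mathrm{ap}}\|<\varepsilon$, and your triangle inequality finishes the proof.
\end{itemize}

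\textbf{Comparison with the paper.} This is the paper's Proposition~\ref{prop:central-aperiodic-averaging}: $\inf_{z\in K}\|P_{\langle zh\rangle}u\|=0$ for fixed $h$. The paper proves it without the joint spectrum of $\langle h\rangle Z$. It bounds $\|P_{\langle zh\rangle}u\|\le\|A_M(z)u\|$ by the mean ergodic theorem and averages $\|A_M(z)u\|^2$ over $z$. The off-diagonal terms are F\o lner averages of $\langle\pi((j-i)z)\pi(h^j)u,\pi(h^i)u\rangle$, which vanish in the limit via the cross-spectral measures on $\widehat Z$. Your joint-spectral version, once you take $n=1$, is a slightly shorter and equally valid alternative.
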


The abelian base case will be proved in Section~6, and the central
induction step will be proved in Section~5. Once these two propositions
are established, Theorem~\ref{thm:main} follows by
induction on the nilpotency class; this final induction is carried out in
Subsection \ref{subsec:completion}.

%%%%%%%%%%%%%%%%%%%%%%%%%%%%%%%%%%%%%%%%%%%%%%%%%%%%%%%%%%%%%%%%%%

\subsection{Finite kernels}

\begin{lemma}
\label{lem:finite-kernel-transfer}
Let \(E\) be an infinite residually finite group, and let
\(F\triangleleft E\) be finite. Suppose that \(E/F\) has the cyclic
escape property. Then \(E\) has the cyclic escape property.
\end{lemma}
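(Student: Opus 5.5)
Since \(E\) is residually finite and \(F\) is finite, there is a finite-index normal subgroup \(N\triangleleft E\) with \(N\cap F=\{e\}\). To find it, choose for each nontrivial \(f\in F\) a finite-index normal subgroup missing \(f\), and intersect these finitely many subgroups. Then \(N\) is infinite, because \(E\) is infinite. The quotient map \(q\colon E\to E/F\) restricts to an injective homomorphism on \(N\), so \(N\cong NF/F\). The image \(NF/F\) has finite index in \(E/F\), and \(E/F\) is infinite since \(F\) is finite.

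We are given that \(E/F\) has the cyclic escape property. By Lemma~\ref{lem:finite-index-subgroup-cyclic-escape}, applied to the infinite finite-index subgroup \(NF/F\leq E/F\), the group \(NF/F\) has the cyclic escape property. Being isomorphic to it, \(N\) also has the cyclic escape property. Finally, \(N\) is an infinite finite-index subgroup of \(E\), so Lemma~\ref{lem:finite-index-overgroup-cyclic-escape} gives the cyclic escape property for \(E\).

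The only point needing care is that the cyclic escape property is invariant under isomorphism. This is clear from Definition~\ref{def:cyclic-escape-property}, since total ergodicity of representations and the class of infinite cyclic subgroups are both transported along an isomorphism. No step is a genuine obstacle. Residual finiteness is used exactly once, to split off \(F\) on a finite-index subgroup, and this is the step where the hypothesis is essential.
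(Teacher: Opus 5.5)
Your proposal is correct and matches the paper's proof step for step. Both take a finite-index normal subgroup meeting \(F\) trivially, apply Lemma~\ref{lem:finite-index-subgroup-cyclic-escape} to push the cyclic escape property from \(E/F\) down to the isomorphic image \(q(E_0)\), and then apply Lemma~\ref{lem:finite-index-overgroup-cyclic-escape} to lift it back up to \(E\). Your explicit construction of that subgroup, as an intersection of finitely many finite-index normal subgroups avoiding the nontrivial elements of \(F\), and your remark on isomorphism invariance spell out details the paper leaves implicit.
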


\begin{proof}
By residual finiteness, there is a finite-index normal subgroup
\(E_0\triangleleft E\) such that
\[
        E_0\cap F=\{e\}.
\]
Let \(q:E\to E/F\) be the quotient map. Then \(q|_{E_0}\) is injective,
and \(q(E_0)\) has finite index in \(E/F\). Since \(E_0\) is infinite,
so is \(q(E_0)\).

By Lemma~\ref{lem:finite-index-subgroup-cyclic-escape}, the group
\(q(E_0)\) has the cyclic escape property. Hence \(E_0\) has the cyclic
escape property. Since \(E_0\) has finite index in \(E\), Lemma
\ref{lem:finite-index-overgroup-cyclic-escape} implies that \(E\) has
the cyclic escape property.
\end{proof}

%%%%%%%%%%%%%%%%%%%%%%%%%%%%%%%%%%%%%%%%%%%%%%%%%%%%%%%%%%%%%%%%%%

\subsection{Finite-index quotients of the central subgroup}

Let
\[
1\longrightarrow Z\longrightarrow G\xrightarrow{\,p\,}Q\longrightarrow 1
\]
be an exact sequence, where $G$ is finitely generated nilpotent and
$Z$ is central and finitely generated abelian.

The following simple observation handles the case in which the quotient
is finite.

\begin{lemma}\label{lem:finite-quotient-central-case}
Assume Proposition~\ref{prop:abelian-base}. If $Q$ is finite and $G$ is
infinite, then $G$ has the cyclic escape property.
\end{lemma}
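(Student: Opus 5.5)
If $Q=G/Z$ is finite and $G$ is infinite, then $Z$ is an infinite finite-index subgroup of $G$. By Lemma~\ref{lem:last-lower-central}, or simply by the hypothesis of this subsection, $Z$ is finitely generated abelian. So the plan is to combine the abelian base case with the finite-index transfer lemma.

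First, since $G$ is infinite and $[G:Z]=|Q|<\infty$, the subgroup $Z$ must be infinite. It is also finitely generated abelian, so Proposition~\ref{prop:abelian-base} applies to $Z$ and shows that $Z$ has the cyclic escape property.

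Second, Lemma~\ref{lem:finite-index-overgroup-cyclic-escape} applies with $H=Z$, since $Z$ is an infinite finite-index subgroup of the infinite countable group $G$. It gives that $G$ has the cyclic escape property. Concretely, a totally ergodic representation of $G$ restricts to a totally ergodic representation of $Z$, because finite-index subgroups of $Z$ have finite index in $G$. The infinite cyclic subgroups of $Z$ supplied by Proposition~\ref{prop:abelian-base} are then infinite cyclic subgroups of $G$.

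I expect no step to be a real obstacle. The only points to check are that $Z$ is infinite and that $Z$ has finite index in $G$; both follow immediately from finiteness of $Q$ together with infiniteness of $G$. Centrality of $Z$ is not even needed here.
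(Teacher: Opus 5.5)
Your proposal is correct and matches the paper's proof: finiteness of $Q$ makes $Z$ an infinite finite-index subgroup, Proposition~\ref{prop:abelian-base} gives $Z$ the cyclic escape property, and Lemma~\ref{lem:finite-index-overgroup-cyclic-escape} transfers it to $G$. Your remark that centrality is not needed is also right; only that $Z$ is finitely generated abelian matters.
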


\begin{proof}
If $Q$ is finite, then $Z$ has finite index in $G$. Since $G$ is
infinite, $Z$ is infinite. By Proposition~\ref{prop:abelian-base}, the
group $Z$ has the cyclic escape property. Lemma~\ref{lem:finite-index-overgroup-cyclic-escape}
therefore implies that $G$ has the cyclic escape property.
\end{proof}

We shall also need to pass from \(G\) to quotients by finite-index
subgroups of \(Z\). If \(K\leq Z\) has finite index, then \(Z/K\) is a
finite normal subgroup of \(G/K\), and
\[
        (G/K)/(Z/K)\cong G/Z.
\]
Thus Lemma~\ref{lem:finite-kernel-transfer} allows us to remove the finite
central kernel \(Z/K\).

\begin{lemma}\label{lem:central-finite-index-quotient}
Assume that $Q$ is infinite, and suppose that every infinite
finite-index subgroup of $Q$ has the cyclic escape property.
Let $K\leq Z$ be finite index. Then $G/K$ has the cyclic escape property.
\end{lemma}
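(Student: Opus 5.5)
The plan is to apply Lemma~\ref{lem:finite-kernel-transfer} with \(E=G/K\) and \(F=Z/K\). First, \(E=G/K\) is infinite. Indeed, \(E/F\cong(G/K)/(Z/K)\cong G/Z=Q\), and \(Q\) is infinite by assumption. Second, \(F=Z/K\) is finite because \(K\) has finite index in \(Z\), and \(F\) is normal (even central) in \(E\) because \(Z\) is central in \(G\). Third, \(E\) is residually finite. It is a quotient of the finitely generated nilpotent group \(G\), so it is itself finitely generated nilpotent, and finitely generated nilpotent groups are residually finite. This is a classical theorem of Hirsch; see \cite{Segal_1983}, alongside the Hall result already cited.

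It remains to check that \(E/F\) has the cyclic escape property. Under the isomorphism \(E/F\cong Q\), this holds because \(Q\) is an infinite finite-index subgroup of itself, and the hypothesis gives it the cyclic escape property. Lemma~\ref{lem:finite-kernel-transfer} then applies and shows that \(G/K\) has the cyclic escape property.

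There is essentially no obstacle. The only inputs not proved in the paper are residual finiteness of finitely generated nilpotent groups and the isomorphism \((G/K)/(Z/K)\cong G/Z\), which is already noted in the text. The only care needed is that the cyclic escape property is invariant under group isomorphism, which is immediate from the definition.

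The hypothesis on all infinite finite-index subgroups of \(Q\), rather than only on \(Q\) itself, is stronger than this lemma requires. I expect it is kept for later use, where one passes to finite-index subgroups of \(G\). Alternatively, Lemma~\ref{lem:finite-kernel-transfer} internally passes to the finite-index subgroup \(q(E_0)\) of \(E/F\). That step is already covered by Lemma~\ref{lem:finite-index-subgroup-cyclic-escape}, but one could also invoke the hypothesis directly there.
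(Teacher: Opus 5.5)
Your proposal is correct and follows the paper's proof: both apply Lemma~\ref{lem:finite-kernel-transfer} with $E=G/K$ and $F=Z/K$, and both use $E/F\cong Q$ to get that $E$ is infinite and that $E/F$ has the cyclic escape property. You also state the residual finiteness of $E$ explicitly (finitely generated nilpotent groups are residually finite), which the paper leaves implicit when it remarks that $E$ is finitely generated nilpotent.
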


\begin{proof}
Put $E=G/K$ and $F=Z/K$. Then $F$ is a finite normal subgroup of $E$,
and $E/F$ is naturally isomorphic to $Q$. Since $Q$ is infinite, the
group $E$ is infinite. It is also finitely generated nilpotent.

Every infinite finite-index subgroup of $E/F$ has the cyclic escape property, because $E/F\cong Q$. Hence
Lemma~\ref{lem:finite-kernel-transfer} applies and gives the cyclic escape property for $E=G/K$.
\end{proof}

%%%%%%%%%%%%%%%%%%%%%%%%%%%%%%%%%%%%%%%%%%%%%%%%%%%%%%%%%%%%%%%%%%

\section{Central spectral decompositions}
\label{sec:central-spectral-decompositions}

Throughout this section, $G$ is a countable group, $Z\leq Z(G)$ is a
finitely generated abelian central subgroup, and $(V,\pi)$ is a unitary
representation of $G$. We write $\widehat Z$ for the compact dual group
of $Z$. Let $E_Z$ denote the projection-valued spectral measure for the
restriction of $\pi$ to $Z$. Thus, for $z\in Z$,
\[
\pi(z)=\int_{\widehat Z}\chi(z)\,dE_Z(\chi).
\]
If $A\subseteq \widehat Z$ is Borel, we write $V_A:=E_Z(A)V$. For more details about projection-valued spectral measures, see \cite[Chapter 2, Section 6]{Einsiedler_Ward_2025}.

\subsection{Central spectral subspaces}

The centrality of $Z$ implies that the spectral decomposition for $Z$ is
preserved by the whole group.

\begin{lemma}\label{lem:central-spectral-subspaces}
For every Borel set $A\subseteq\widehat Z$, the subspace $V_A$ is
$G$-invariant.
\end{lemma}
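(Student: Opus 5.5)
The plan is to show that each operator \(\pi(g)\), \(g\in G\), commutes with every spectral projection \(E_Z(A)\). Once this holds, for \(v\in V_A\) we get
\[
\pi(g)v=\pi(g)E_Z(A)v=E_Z(A)\pi(g)v\in V_A ,
\]
which is exactly the invariance claimed.

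To prove the commutation, I fix \(g\in G\) and use centrality: \(\pi(g)\pi(z)\pi(g)^{-1}=\pi(gzg^{-1})=\pi(z)\) for every \(z\in Z\). Define a new projection-valued measure \(F(A)=\pi(g)E_Z(A)\pi(g)^{-1}\) on \(\widehat Z\). It is again a projection-valued measure, since conjugation by a unitary preserves orthogonality, multiplicativity, and strong countable additivity. It also satisfies
\[
\int_{\widehat Z}\chi(z)\,dF(\chi)=\pi(g)\pi(z)\pi(g)^{-1}=\pi(z).
\]

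Next I invoke uniqueness of the spectral measure for the unitary representation \(\pi|_Z\) of the discrete abelian group \(Z\). This is the SNAG/Stone theorem, as in the cited reference. It gives \(F=E_Z\), so \(\pi(g)E_Z(A)=E_Z(A)\pi(g)\) for every Borel \(A\).

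The only real point is this uniqueness. If one prefers a more elementary route, note that the functions \(\chi\mapsto\chi(z)\), \(z\in Z\), span a dense subalgebra of \(C(\widehat Z)\) by Stone--Weierstrass. The operators \(\int f\,dE_Z\) and \(\int f\,dF\) therefore agree for all continuous \(f\). For each pair of vectors \(u,w\), the scalar measures \(\langle E_Z(\cdot)u,w\rangle\) and \(\langle F(\cdot)u,w\rangle\) then coincide by the Riesz representation theorem, so the two projection-valued measures agree on all Borel sets.
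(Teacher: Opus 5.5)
Your proof is correct and follows the same strategy as the paper: centrality gives $\pi(g)\pi(z)=\pi(z)\pi(g)$, this is upgraded to $\pi(g)E_Z(A)=E_Z(A)\pi(g)$, and invariance follows. The only difference is in how the upgrade is justified. The paper uses the fact that the spectral projections lie in the von Neumann algebra generated by $\pi(Z)$. You instead conjugate the projection-valued measure and invoke uniqueness of the spectral measure, which you prove directly via Stone--Weierstrass and Riesz; this is a slightly more self-contained route to the same fact.
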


\begin{proof}
Let $g\in G$ and $z\in Z$. Since $Z$ is central, $\pi(g)$ commutes with
$\pi(z)$. Hence $\pi(g)$ commutes with the von Neumann algebra generated
by $\{\pi(z):z\in Z\}$, and therefore with each spectral projection
$E_Z(A)$. Thus $\pi(g)V_A=V_A$.
\end{proof}

For $v\in V$, let $\sigma_v$ be the spectral measure of $v$ for the
restriction to $Z$:
\[
\sigma_v(A)=\|E_Z(A)v\|^2
\]
for Borel $A\subseteq\widehat Z$. More generally, for $v,w\in V$, let
$\sigma_{v,w}$ be the complex spectral measure defined by
$\sigma_{v,w}(A)=\langle E_Z(A)v,w\rangle$.

We shall use the following consequence repeatedly.

\begin{lemma}\label{lem:central-spectral-type-preserved}
For every $g\in G$ and every $v\in V$,
\[
\sigma_{\pi(g)v}=\sigma_v.
\]
More generally, $\sigma_{\pi(g)v,\pi(g)w}=\sigma_{v,w}$ for all
$v,w\in V$.
\end{lemma}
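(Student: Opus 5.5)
The plan is to derive this directly from the commutation established in the proof of Lemma~\ref{lem:central-spectral-subspaces}: for every $g\in G$ and every Borel set $A\subseteq\widehat Z$, the operator $\pi(g)$ commutes with the spectral projection $E_Z(A)$. This holds because $\pi(g)$ commutes with every $\pi(z)$, $z\in Z$, by centrality. It therefore commutes with the von Neumann algebra generated by these operators, and that algebra contains all the spectral projections $E_Z(A)$ by the spectral theorem for the unitary representation of the abelian group $Z$.

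I will prove the general statement first. Given $v,w\in V$ and a Borel set $A$, use the commutation and then unitarity of $\pi(g)$:
\[
\sigma_{\pi(g)v,\pi(g)w}(A)=\langle E_Z(A)\pi(g)v,\pi(g)w\rangle=\langle \pi(g)E_Z(A)v,\pi(g)w\rangle=\langle E_Z(A)v,w\rangle=\sigma_{v,w}(A).
\]
Since this holds for every Borel $A$, the two complex measures coincide. Taking $w=v$ gives $\sigma_{\pi(g)v}(A)=\|E_Z(A)\pi(g)v\|^2=\sigma_{v,v}(A)=\sigma_v(A)$, which is the first assertion.

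An alternative check, which avoids von Neumann algebras, compares Fourier coefficients. For $z\in Z$,
\[
\int\chi(z)\,d\sigma_{\pi(g)v,\pi(g)w}=\langle\pi(z)\pi(g)v,\pi(g)w\rangle=\langle\pi(g)\pi(z)v,\pi(g)w\rangle=\langle \pi(z)v,w\rangle,
\]
and finite complex measures on the compact group $\widehat Z$ are determined by their Fourier coefficients. There is no real obstacle here. The only point to justify is the commutation of $\pi(g)$ with the spectral projections, and that was already done in Lemma~\ref{lem:central-spectral-subspaces}.
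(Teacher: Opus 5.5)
Your proposal is correct and follows essentially the same argument as the paper: $\pi(g)$ commutes with every spectral projection $E_Z(A)$ by centrality, and unitarity of $\pi(g)$ then gives equality of the measures. You treat the cross-spectral case first and specialize to $w=v$, where the paper does the reverse; your Fourier-coefficient check is a valid optional alternative.
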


\begin{proof}
Since $\pi(g)$ commutes with every spectral projection $E_Z(A)$,
\[
\sigma_{\pi(g)v}(A)
=
\|E_Z(A)\pi(g)v\|^2
=
\|\pi(g)E_Z(A)v\|^2
=
\sigma_v(A).
\]
The same commutation relation gives the corresponding identity for
cross-spectral measures.
\end{proof}

%%%%%%%%%%%%%%%%%%%%%%%%%%%%%%%%%%%%%%%%%%%%%%%%%%%%%%%%%%%%%%%%%%

\subsection{Periodic and aperiodic central spectrum}

A character $\chi\in\widehat Z$ is called \emph{periodic} if it has
finite order. We write $\Per(Z)$ for the set of periodic characters in
$\widehat Z$.

Since $Z$ is finitely generated abelian, a character
$\chi\in\widehat Z$ is periodic if and only if it is trivial on the subgroup $dZ$
for some $d\geq1$. Thus
\[
\Per(Z)=\bigcup_{d\geq1}\Ann(dZ),
\qquad
\Ann(dZ)=\{\chi\in\widehat Z:\chi(dz)=1\text{ for every }z\in Z\}.
\]
Since \(Z\) is finitely generated abelian, the quotient \(Z/dZ\) is
finite for every \(d\geq 1\). Hence
\(\operatorname{Ann}(dZ)\) is finite, being naturally identified with
the dual of \(Z/dZ\). The finiteness of these annihilators is what allows us below to replace
the periodic part of the central spectrum by finite spectral supports.

\begin{lemma}\label{lem:periodic-central-exhaustion}
Let $\rho$ be a finite positive Borel measure on $\widehat Z$. Then, for
every $\delta>0$, there exists $d\geq1$ such that
\[
\rho\bigl(\Per(Z)\setminus\Ann(dZ)\bigr)<\delta.
\]
\end{lemma}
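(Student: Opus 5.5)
The plan is to use continuity of the finite measure $\rho$ along an increasing sequence of sets that exhausts $\Per(Z)$. The natural choice is the factorial sequence $d_n=n!$.

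First we observe that the annihilators are monotone in divisibility. If $d\mid d'$, then $d'Z\leq dZ$, and hence $\Ann(dZ)\subseteq\Ann(d'Z)$. With $d_n=n!$ we have $d_n\mid d_{n+1}$, so
\[
A_n:=\Ann(n!\,Z)
\]
is an increasing sequence of sets. Each $A_n$ is finite, as noted before the statement, and therefore Borel. Their union is all of $\Per(Z)$. Indeed, every periodic $\chi$ lies in some $\Ann(dZ)$, and since $d\mid d!$ it also lies in $A_d$. In particular, $\Per(Z)$ is a countable union of finite sets, so it is a Borel (indeed countable) set, and $\rho(\Per(Z))$ makes sense.

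Next we apply continuity from below of the finite measure $\rho$:
\[
\rho(A_n)\uparrow\rho(\Per(Z)).
\]
Since $\rho$ is finite, this gives $\rho(\Per(Z)\setminus A_n)=\rho(\Per(Z))-\rho(A_n)\to 0$. Given $\delta>0$, we choose $n$ with $\rho(\Per(Z)\setminus A_n)<\delta$ and take $d=n!$.

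No step is a genuine obstacle. The only point needing care is the monotonicity, which is why we pass to a divisibility-cofinal sequence such as $n!$ rather than using $d$ directly. This mirrors the argument for $\mathbb Q$ in the introduction.
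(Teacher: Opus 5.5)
Your proof is correct and is essentially the paper's argument: the paper also takes $d_k=k!$, observes that the annihilators $\Ann(d_kZ)$ increase and exhaust $\Per(Z)$, and applies $\sigma$-additivity (continuity from below) of $\rho$. You additionally check that $\Per(Z)$ is Borel and use the finiteness of $\rho$ to subtract measures; the paper leaves both points implicit.
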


\begin{proof}
For $k\geq1$, put $d_k=k!$. Then
\[
\Ann(d_1Z)\subseteq \Ann(d_2Z)\subseteq\cdots
\]
and
\[
\Per(Z)=\bigcup_{k\geq1}\Ann(d_kZ).
\]
Indeed, if $\chi\in\Per(Z)$ has order $r$, then
$\chi\in\Ann(d_kZ)$ whenever $r\mid d_k$. By $\sigma$-additivity,
\[
\rho(\Per(Z))
=
\lim_{k\to\infty}\rho(\Ann(d_kZ)).
\]
Therefore $\rho(\Per(Z)\setminus\Ann(d_kZ))<\delta$ for all sufficiently
large $k$.
\end{proof}

For $v\in V$, put
\[
v_{\mathrm{per}}:=E_Z(\Per(Z))v,
\qquad
v_{\mathrm{ap}}:=v-v_{\mathrm{per}}.
\]
Thus \(v_{\mathrm{per}}\) is supported on the periodic central spectrum,
whereas the spectral measure of \(v_{\mathrm{ap}}\) assigns measure zero
to \(\operatorname{Per}(Z)\). By
Lemma~\ref{lem:central-spectral-subspaces}, the subspaces
\[
        E_Z(\operatorname{Per}(Z))V
        \quad\text{and}\quad
        E_Z(\widehat Z\setminus \operatorname{Per}(Z))V
\]
are \(G\)-invariant, and \(v_{\mathrm{per}}\) and
\(v_{\mathrm{ap}}\) belong to these subspaces respectively.

Since $\Ann(dZ)$ is finite for every $d$,
Lemma~\ref{lem:periodic-central-exhaustion} implies that
\[
E_Z(\Ann(k!Z))v_{\mathrm{per}}\longrightarrow v_{\mathrm{per}}
\]
as $k\to\infty$.

%%%%%%%%%%%%%%%%%%%%%%%%%%%%%%%%%%%%%%%%%%%%%%%%%%%%%%%%%%%%%%%%%%

\subsection{Finite periodic supports}

Let $\Omega\subseteq\Per(Z)$ be finite, and put
$V_\Omega:=E_Z(\Omega)V$. By Lemma~\ref{lem:central-spectral-subspaces},
the subspace $V_\Omega$ is $G$-invariant.

Since each \(\chi\in\Omega\) has finite order, its image is finite.
Thus \(Z/\ker\chi\cong \chi(Z)\) is finite, so \(\ker\chi\) has finite
index in \(Z\). Since \(\Omega\) is finite, the subgroup
\[
        K_\Omega :=\bigcap_{\chi\in\Omega}\ker\chi
\]
has finite index in \(Z\).

\begin{lemma}\label{lem:finite-periodic-support-factors}
The subgroup $K_\Omega$ acts trivially on $V_\Omega$. Consequently, the
restricted representation of $G$ on $V_\Omega$ factors through the
quotient $G/K_\Omega$.
\end{lemma}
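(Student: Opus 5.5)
The plan is to compute \(\pi(z)\) on \(V_\Omega\) directly from the spectral representation, and then read off the factorization from the centrality of \(K_\Omega\).

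First, fix \(z\in K_\Omega\) and \(w\in V_\Omega\), so that \(w=E_Z(\Omega)w\). Using \(\pi(z)=\int_{\widehat Z}\chi(z)\,dE_Z(\chi)\) and multiplicativity of the spectral calculus, we get
\[
\pi(z)w=\pi(z)E_Z(\Omega)w=\int_{\Omega}\chi(z)\,dE_Z(\chi)\,w .
\]
Since \(z\in\ker\chi\) for every \(\chi\in\Omega\), the integrand equals \(1\) on \(\Omega\). The integral therefore reduces to \(E_Z(\Omega)w=w\). An equivalent finite-sum version is also available: because \(\Omega\) is finite, \(E_Z(\Omega)=\sum_{\chi\in\Omega}E_Z(\{\chi\})\). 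Each \(E_Z(\{\chi\})V\) is the \(\chi\)-eigenspace of \(Z\), on which \(\pi(z)\) acts by \(\chi(z)=1\). Either way, \(K_\Omega\) acts trivially on \(V_\Omega\).

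Second, for the factorization we note that \(K_\Omega\leq Z\leq Z(G)\) is normal in \(G\). By Lemma~\ref{lem:central-spectral-subspaces}, \(V_\Omega\) is \(G\)-invariant, so the restriction \(\pi_\Omega\colon G\to U(V_\Omega)\) is a homomorphism. The first step shows that its kernel contains \(K_\Omega\). Hence \(\pi_\Omega\) descends to a unitary representation of \(G/K_\Omega\).

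There is no real obstacle here. The only point needing care is the justification that \(\pi(z)E_Z(\Omega)=\int_\Omega\chi(z)\,dE_Z\), which is the standard functional calculus identity \(\bigl(\int f\,dE\bigr)E(A)=\int 1_Af\,dE\).
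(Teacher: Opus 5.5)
Your proof is correct and follows the paper's argument: write $\pi(k)v=\int_\Omega\chi(k)\,dE_Z(\chi)v$ for $v\in V_\Omega$, note that the integrand is identically $1$ on $\Omega$, and then use that $K_\Omega$ is central, hence normal, to factor through $G/K_\Omega$. Your explicit appeal to the $G$-invariance of $V_\Omega$ (Lemma~\ref{lem:central-spectral-subspaces}) and your finite-sum eigenspace version are correct additions that the paper leaves implicit.
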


\begin{proof}
Let \(k\in K_\Omega\) and \(v\in V_\Omega\). Since
\(v=E_Z(\Omega)v\), we have
\[
        \pi(k)v
        =
        \int_\Omega \chi(k)\,dE_Z(\chi)v
        =
        \int_\Omega 1\,dE_Z(\chi)v
        =
        v.
\]
Thus \(K_\Omega\) acts trivially on \(V_\Omega\). Since \(K_\Omega\) is
central in \(G\), it is normal, and the restricted representation of
\(G\) on \(V_\Omega\) factors through \(G/K_\Omega\).
\end{proof}

When $G$ is finitely generated nilpotent and $Z$ is central, the quotient
$G/K_\Omega$ is again finitely generated nilpotent, and the image of
$Z$ in $G/K_\Omega$ is the finite central subgroup $Z/K_\Omega$. This is
the finite central kernel which will be removed using
Lemma~\ref{lem:finite-kernel-transfer}.

%%%%%%%%%%%%%%%%%%%%%%%%%%%%%%%%%%%%%%%%%%%%%%%%%%%%%%%%%%%%%%%%%%

\subsection{Approximation of the periodic part}

The preceding subsection applies to vectors supported on a finite subset
of $\Per(Z)$. We now record the corresponding approximation statement for
an arbitrary vector.

\begin{lemma}\label{lem:periodic-part-finite-approximation}
Let $v\in V$ and let $\eta>0$. Then there exists a finite set
$\Omega\subseteq\Per(Z)$ such that, with $u=E_Z(\Omega)v$, we have
\[
\|v_{\mathrm{per}}-u\|<\eta.
\]
Moreover, if $K_\Omega=\bigcap_{\chi\in\Omega}\ker\chi$, then
$K_\Omega$ has finite index in $Z$ and acts trivially on the
$G$-invariant subspace $V_\Omega=E_Z(\Omega)V$.
\end{lemma}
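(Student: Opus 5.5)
The plan is to take \(\Omega=\Ann(k!Z)\) for a sufficiently large \(k\), and then read off the remaining assertions from the results of the preceding subsections.

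\emph{Choice of \(\Omega\).} Apply Lemma~\ref{lem:periodic-central-exhaustion} to the finite positive measure \(\rho=\sigma_v\) with \(\delta=\eta^2/2\). Its proof shows that \(\Omega=\Ann(k!Z)\) works for all sufficiently large \(k\). This set is contained in \(\Per(Z)\), and it is finite, being identified with the dual of the finite group \(Z/k!Z\).

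\emph{The estimate.} Put \(u=E_Z(\Omega)v\). Since \(\Omega\subseteq\Per(Z)\), the spectral projections satisfy \(E_Z(\Per(Z))-E_Z(\Omega)=E_Z(\Per(Z)\setminus\Omega)\). Therefore
\[
\|v_{\mathrm{per}}-u\|^2=\|E_Z(\Per(Z)\setminus\Omega)v\|^2=\sigma_v(\Per(Z)\setminus\Omega)<\eta^2/2<\eta^2 .
\]
Alternatively, this is just the convergence \(E_Z(\Ann(k!Z))v_{\mathrm{per}}\to v_{\mathrm{per}}\) recorded after Lemma~\ref{lem:periodic-central-exhaustion}, combined with the identity \(E_Z(\Omega)v=E_Z(\Omega)v_{\mathrm{per}}\).

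\emph{The remaining assertions.} Each \(\chi\in\Omega\) has finite order, so \(\ker\chi\) has finite index in \(Z\). Since \(\Omega\) is finite, the intersection \(K_\Omega\) also has finite index in \(Z\); this is the observation made at the start of the previous subsection. The subspace \(V_\Omega\) is \(G\)-invariant by Lemma~\ref{lem:central-spectral-subspaces}. Finally, \(K_\Omega\) acts trivially on \(V_\Omega\) by Lemma~\ref{lem:finite-periodic-support-factors}.

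There is no real obstacle here. The only point needing care is the identity relating the spectral projections of \(\Per(Z)\) and \(\Omega\), which relies on \(\Omega\subseteq\Per(Z)\) and on the additivity of the projection-valued measure.
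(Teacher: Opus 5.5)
Your proposal is correct and follows essentially the same route as the paper: choose \(\Omega=\Ann(k!Z)\) via Lemma~\ref{lem:periodic-central-exhaustion} applied to \(\sigma_v\), compute \(\|v_{\mathrm{per}}-u\|^2=\sigma_v(\Per(Z)\setminus\Omega)\), and read off finite index and trivial action of \(K_\Omega\) from the preceding observations and Lemma~\ref{lem:finite-periodic-support-factors}. Your choice \(\delta=\eta^2/2\) instead of \(\eta^2\) is harmless but unnecessary.
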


\begin{proof}
By Lemma~\ref{lem:periodic-central-exhaustion}, there exists $k\geq1$
such that
\[
\sigma_v\bigl(\Per(Z)\setminus \Ann(k!Z)\bigr)<\eta^2.
\]
Set $\Omega=\Ann(k!Z)$. Since $Z$ is finitely generated abelian,
$\Omega$ is finite. Also,
\[
\|v_{\mathrm{per}}-E_Z(\Omega)v\|^2
=
\sigma_v\bigl(\Per(Z)\setminus\Omega\bigr)
<
\eta^2.
\]
The remaining assertions follow from Lemma~\ref{lem:finite-periodic-support-factors}.
\end{proof}

Thus every vector may be decomposed, up to an arbitrarily small error,
as
\[
v=u+w+r,
\]
where $u$ is supported on a finite subset of $\Per(Z)$, the vector $w$
has spectral measure giving zero mass to $\Per(Z)$, and $\|r\|$ is as
small as desired. In the induction step, the finite periodic component
$u$ will be handled by passing to a quotient by $K_\Omega$, while the
aperiodic component $w$ will be handled by averaging over $K_\Omega$.

%%%%%%%%%%%%%%%%%%%%%%%%%%%%%%%%%%%%%%%%%%%%%%%%%%%%%%%%%%%%%%%%%%

\subsection{Cyclic projections on finite periodic supports}

Let $\Omega\subseteq\Per(Z)$ be finite, and let
$K_\Omega=\bigcap_{\chi\in\Omega}\ker\chi$. By
Lemma~\ref{lem:finite-periodic-support-factors}, the subspace
$V_\Omega=E_Z(\Omega)V$ is $G$-invariant and the representation of $G$
on $V_\Omega$ factors through $G/K_\Omega$.

\begin{lemma}\label{lem:periodic-support-quotient-projection}
Assume that \((V,\pi)\) is totally ergodic and that
\(G/K_\Omega\) has the cyclic escape property. Then, for every
\(u\in V_\Omega\) and every \(\varepsilon>0\), there exists an element
\(h\in G\) such that \(\langle hK_\Omega\rangle\) is an infinite cyclic
subgroup of \(G/K_\Omega\) and
\[
        \|P_{\langle h\rangle}u\|<\varepsilon .
\]
Moreover, for every \(z\in K_\Omega\),
\[
        \|P_{\langle zh\rangle}u\|=\|P_{\langle h\rangle}u\|.
\]
\end{lemma}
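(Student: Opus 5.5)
The plan is to view \(V_\Omega\) as a representation \(\bar\pi\) of \(\bar G=G/K_\Omega\), check that it is totally ergodic, invoke the cyclic escape property of \(\bar G\), and then lift the resulting cyclic subgroup back to \(G\).

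\emph{Step 1: the quotient representation is totally ergodic.} By Lemma~\ref{lem:finite-periodic-support-factors}, the restriction of \(\pi\) to \(V_\Omega\) factors through \(\bar G\); write \(\bar\pi\) for the induced representation. Let \(\bar M\leq\bar G\) have finite index. Its preimage \(M\leq G\) has finite index and contains \(K_\Omega\). Since \(K_\Omega\) acts trivially on \(V_\Omega\), we get
\[
V_\Omega^{\bar M}=V_\Omega^{M}\subseteq V^M=\{0\},
\]
using total ergodicity of \(\pi\). Hence \(\bar\pi\) is totally ergodic. Also \(V_\Omega\) is \(G\)-invariant by Lemma~\ref{lem:central-spectral-subspaces}, so the projection onto fixed vectors commutes with \(E_Z(\Omega)\). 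Consequently, for \(u\in V_\Omega\), the projection \(P_{\langle h\rangle}u\) computed in \(V\) equals the one computed in \(V_\Omega\).

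\emph{Step 2: apply cyclic escape in the quotient and lift.} Since \(\bar G\) has the cyclic escape property, there is an infinite cyclic subgroup \(\bar C=\langle \bar h\rangle\leq\bar G\) with \(\|P_{\bar C}u\|<\varepsilon\). Choose any lift \(h\in G\) with \(hK_\Omega=\bar h\). The action of \(\pi(h^n)\) on \(V_\Omega\) is \(\bar\pi(\bar h^n)\), so \(V_\Omega^{\langle h\rangle}=V_\Omega^{\bar C}\). Combined with the observation in Step 1, this gives
\[
\|P_{\langle h\rangle}u\|=\|P_{\bar C}u\|<\varepsilon .
\]
The subgroup \(\langle h\rangle\) is infinite because its image \(\bar C\) is infinite.

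\emph{Step 3: the ``moreover'' clause.} Let \(z\in K_\Omega\). Then \(zh\) and \(h\) have the same image \(\bar h\) in \(\bar G\), so they act identically on \(V_\Omega\). The argument of Step 2 then identifies both \(V_\Omega^{\langle zh\rangle}\) and \(V_\Omega^{\langle h\rangle}\) with \(V_\Omega^{\bar C}\), and the two norms agree.

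The only point needing care is that \(P_{\langle h\rangle}\) on \(V\) restricts to the fixed-vector projection on the invariant subspace \(V_\Omega\). This holds because \(V_\Omega\) reduces \(\pi(h)\), so \(V^{\langle h\rangle}=V_\Omega^{\langle h\rangle}\oplus (V_\Omega^\perp)^{\langle h\rangle}\). Beyond this, the argument is bookkeeping, with no real obstacle.
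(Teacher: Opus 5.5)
Your proof is correct and follows essentially the same route as the paper: you show the quotient representation on \(V_\Omega\) is totally ergodic, lift a generator of the escaping cyclic subgroup of \(G/K_\Omega\), and use that \(V_\Omega\) reduces every \(\pi(g)\), so the fixed-vector projections on \(V\) restrict to those on \(V_\Omega\). The only cosmetic difference is in the ``moreover'' clause: you observe that \(zh\) and \(h\) have the same image in \(G/K_\Omega\), whereas the paper writes \((zh)^n=z^nh^n\) using centrality; both arguments are fine.
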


\begin{proof}
The quotient representation of $G/K_\Omega$ on $V_\Omega$ is totally
ergodic. Indeed, if $L/K_\Omega$ is a finite-index subgroup of
$G/K_\Omega$, then $L$ is finite index in $G$, and total ergodicity of
the $G$-representation gives $V^L=\{0\}$.

Since $G/K_\Omega$ has the cyclic escape property, there is an
infinite cyclic subgroup of $G/K_\Omega$ whose fixed-space projection of
$u$ has norm less than $\varepsilon$. Choose $h\in G$ so that
$\langle hK_\Omega\rangle$ is this cyclic subgroup. Since $K_\Omega$
acts trivially on $V_\Omega$, the projection of $u$ onto the
$\langle h\rangle$-fixed subspace agrees with its projection onto the
$\langle hK_\Omega\rangle$-fixed subspace in the quotient
representation. Hence $\|P_{\langle h\rangle}u\|<\varepsilon$.

Let \(z\in K_\Omega\). Since \(K_\Omega\) acts trivially on
\(V_\Omega\) and \(K_\Omega\) is central in \(G\), we have, for every
\(n\in\mathbb Z\) and every \(w\in V_\Omega\),
\[
        \pi((zh)^n)w=\pi(h^n)w.
\]
Indeed, \((zh)^n=z^n h^n\), and \(\pi(z^n)w=w\) on \(V_\Omega\).
Hence the \(\langle zh\rangle\)-fixed subspace inside \(V_\Omega\) is
the same as the \(\langle h\rangle\)-fixed subspace inside \(V_\Omega\).

Since \(V_\Omega\) is \(G\)-invariant, its orthogonal complement is also
\(G\)-invariant. Therefore the orthogonal projections
\(P_{\langle zh\rangle}\) and \(P_{\langle h\rangle}\) preserve
\(V_\Omega\). For \(u\in V_\Omega\), their restrictions to
\(V_\Omega\) are the projections onto the same subspace. Thus
\[
        P_{\langle zh\rangle}u=P_{\langle h\rangle}u,
\]
and hence
\[
        \|P_{\langle zh\rangle}u\|=\|P_{\langle h\rangle}u\|.
\]
\end{proof}

%%%%%%%%%%%%%%%%%%%%%%%%%%%%%%%%%%%%%%%%%%%%%%%%%%%%%%%%%%%%%%%%%%

\subsection{Restriction to finite-index subgroups of the centre}

Let $K\leq Z$ be a finite-index subgroup. We shall average over $K$ in
the next section. The following elementary observation is the reason why
aperiodicity for $Z$ remains visible after restriction to $K$.

\begin{lemma}\label{lem:finite-index-center-aperiodic}
Let $K\leq Z$ be finite index, and let $j\in\mathbb Z\setminus\{0\}$.
Then
\[
\{\chi\in\widehat Z:\chi(jk)=1\text{ for every }k\in K\}
\subseteq \Per(Z).
\]
\end{lemma}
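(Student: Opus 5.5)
The plan is to show that any character $\chi$ in the displayed set has finite order, by writing $d\,z\in jK$ for every $z\in Z$ with a single fixed integer $d\neq 0$.

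First we use that $K$ has finite index in $Z$. Put $m=[Z:K]$. Since $Z$ is abelian, the quotient $Z/K$ is a finite abelian group of order $m$, so $mz\in K$ for every $z\in Z$ by Lagrange's theorem. This is the only place where finite index enters.

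Now fix $\chi$ with $\chi(jk)=1$ for all $k\in K$, and let $z\in Z$. Applying the hypothesis to $k=mz\in K$ gives
\[
\chi(jmz)=1 .
\]
Thus $\chi$ is trivial on $|jm|Z$, which means $\chi\in\Ann(|jm|Z)$. Since $j\neq0$ and $m\geq1$, we have $|jm|\geq1$. By the description $\Per(Z)=\bigcup_{d\geq1}\Ann(dZ)$ recorded before Lemma~\ref{lem:periodic-central-exhaustion}, it follows that $\chi\in\Per(Z)$. Equivalently, $\chi^{jm}$ is the trivial character, so $\chi$ has finite order.

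There is no genuine obstacle here. The only point needing care is to use the exponent of the finite quotient $Z/K$ rather than any structure of $K$ itself. We also use that $Z$ is abelian, which is written additively here, so that $mz\in K$ holds uniformly in $z$.
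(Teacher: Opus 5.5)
Your proof is correct and matches the paper's argument. The paper also chooses an integer $q\geq 1$ with $qz\in K$ for all $z\in Z$ (your $q=[Z:K]$, justified by Lagrange), deduces $\chi(jqz)=1$ for every $z\in Z$, and concludes that $\chi$ has finite order.
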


\begin{proof}
Suppose that $\chi(jk)=1$ for every $k\in K$. Since $K$ has finite
index in $Z$, there is an integer $q\geq1$ such that $qz\in K$ for every
$z\in Z$. Hence $\chi(jqz)=1$ for every $z\in Z$. Thus $\chi$ has finite
order, so $\chi\in\Per(Z)$.
\end{proof}

\begin{corollary}\label{cor:aperiodic-cross-measures-on-K}
Let $u,w\in V$, and suppose that
$\sigma_u(\Per(Z))=0$ and $\sigma_w(\Per(Z))=0$. Then, for every
finite-index subgroup $K\leq Z$ and every $j\neq0$, the total variation
measure $|\sigma_{u,w}|$ gives zero mass to
\[
\{\chi\in\widehat Z:\chi(jk)=1\text{ for every }k\in K\}.
\]
\end{corollary}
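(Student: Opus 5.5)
The corollary should follow by combining Lemma~\ref{lem:finite-index-center-aperiodic} with the standard domination of cross-spectral measures by the diagonal ones. Write
\[
A_{K,j}=\{\chi\in\widehat Z:\chi(jk)=1\text{ for every }k\in K\}.
\]
This is a closed, hence Borel, subset of \(\widehat Z\). By Lemma~\ref{lem:finite-index-center-aperiodic}, \(A_{K,j}\subseteq\Per(Z)\). It therefore suffices to prove that \(|\sigma_{u,w}|(\Per(Z))=0\), which is a statement not involving \(K\) or \(j\) at all.

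The key step is the Cauchy--Schwarz bound for projection-valued measures. For every Borel set \(B\subseteq\widehat Z\),
\[
|\sigma_{u,w}(B)|=|\langle E_Z(B)u,E_Z(B)w\rangle|\leq \sigma_u(B)^{1/2}\sigma_w(B)^{1/2}.
\]
To pass to the total variation, take a Borel set \(A\) and a finite Borel partition \(A=\bigsqcup_i B_i\). Then
\[
\sum_i|\sigma_{u,w}(B_i)|\leq\sum_i\sigma_u(B_i)^{1/2}\sigma_w(B_i)^{1/2}\leq\sigma_u(A)^{1/2}\sigma_w(A)^{1/2},
\]
where the second inequality is Cauchy--Schwarz for finite sums. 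Taking the supremum over partitions gives \(|\sigma_{u,w}|(A)\leq\sigma_u(A)^{1/2}\sigma_w(A)^{1/2}\).

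Applying this with \(A=\Per(Z)\), which is a countable union of the finite sets \(\Ann(dZ)\) and hence Borel, the hypotheses give \(|\sigma_{u,w}|(\Per(Z))=0\). By monotonicity, \(|\sigma_{u,w}|(A_{K,j})=0\).

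There is no real obstacle here. The only points needing care are the measurability of \(\Per(Z)\) and \(A_{K,j}\), and the passage from the pointwise bound to the total variation bound, which the partition argument above handles.
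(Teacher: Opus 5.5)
Your proof is correct and follows the paper's argument: the Cauchy--Schwarz bound $|\sigma_{u,w}(B)|\le\sigma_u(B)^{1/2}\sigma_w(B)^{1/2}$ for projection-valued measures, combined with the inclusion into $\Per(Z)$ from Lemma~\ref{lem:finite-index-center-aperiodic}. The only cosmetic difference is that you upgrade the bound to the total variation via partitions and apply it to $\Per(Z)$. The paper instead observes that $\sigma_{u,w}$ vanishes on every Borel subset of a $\sigma_u$-null set and applies this directly to the set in question.
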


\begin{proof}
Let \(A\subset \widehat Z\) be Borel. Since \(E_Z(A)\) is an
orthogonal projection,
\[
        |\sigma_{u,w}(A)|
        =
        |\langle E_Z(A)u,w\rangle|
        =
        |\langle E_Z(A)u,E_Z(A)w\rangle|
        \leq
        \|E_Z(A)u\|\,\|E_Z(A)w\|.
\]
Thus
\[
        |\sigma_{u,w}(A)|^2
        \leq
        \sigma_u(A)\sigma_w(A).
\]
In particular, if either \(\sigma_u(A)=0\) or \(\sigma_w(A)=0\), then
\(\sigma_{u,w}\) vanishes on every Borel subset of \(A\), and hence
\[
        |\sigma_{u,w}|(A)=0.
\]

Now take
\[
        A=
        \{\chi\in\widehat Z:\chi(jk)=1\text{ for every }k\in K\}.
\]
By Lemma~\ref{lem:finite-index-center-aperiodic}, this set is
contained in \(\operatorname{Per}(Z)\). Since
\(\sigma_u(\operatorname{Per}(Z))=0\) and
\(\sigma_w(\operatorname{Per}(Z))=0\), the preceding observation gives
\[
        |\sigma_{u,w}|(A)=0.
\]
\end{proof}

This corollary will be used with $w=\pi(g)u$. By
Lemma~\ref{lem:central-spectral-type-preserved}, if
$\sigma_u(\Per(Z))=0$, then also $\sigma_{\pi(g)u}(\Per(Z))=0$ for every
$g\in G$.

%%%%%%%%%%%%%%%%%%%%%%%%%%%%%%%%%%%%%%%%%%%%%%%%%%%%%%%%%%%%%%%%%%

\section{Aperiodic central averaging}

Throughout this section, $G$ is a countable group, $Z\leq Z(G)$ is a
finitely generated abelian central subgroup, and $(V,\pi)$ is a unitary
representation of $G$. We keep the spectral notation from
Section~\ref{sec:central-spectral-decompositions}: $E_Z$ is the spectral
measure for the restriction of $\pi$ to $Z$, and $\sigma_{u,w}$ denotes
the corresponding cross-spectral measure. If $L\leq G$, we write $P_L$
for the orthogonal projection onto $V^L$.

The goal of this section is to prove that central aperiodic spectrum can
be made invisible to a suitable cyclic projection after multiplying a
fixed group element by an element of a finite-index subgroup of $Z$.

\subsection{Averaging over finite-index subgroups of the centre}

Let $K\leq Z$ be finite index, and fix a Følner sequence $(F_R)$ in $K$.

\begin{lemma}\label{lem:central-fourier-averaging}
Let $K\leq Z$ be finite index, let $j\in\mathbb Z\setminus\{0\}$, and
let $u,w\in V$ satisfy
\[
\sigma_u(\Per(Z))=\sigma_w(\Per(Z))=0.
\]
Then
\[
\frac1{|F_R|}\sum_{z\in F_R}
\langle \pi(jz)u,w\rangle
\longrightarrow 0.
\]
\end{lemma}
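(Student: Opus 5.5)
We will write the matrix coefficient as a Fourier transform of the cross-spectral measure and then apply a mean ergodic argument on \(\widehat Z\). By the spectral representation of \(\pi|_Z\), for every \(z\in K\) we have
\[
\langle \pi(jz)u,w\rangle=\int_{\widehat Z}\chi(jz)\,d\sigma_{u,w}(\chi).
\]
Averaging over \(F_R\) and interchanging the finite sum with the integral gives
\[
\frac1{|F_R|}\sum_{z\in F_R}\langle \pi(jz)u,w\rangle
=\int_{\widehat Z}\Phi_R(\chi)\,d\sigma_{u,w}(\chi),
\qquad
\Phi_R(\chi):=\frac1{|F_R|}\sum_{z\in F_R}\chi(jz).
\]
This reduces the claim to an estimate on the averages \(\Phi_R\).

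For fixed \(\chi\), the map \(z\mapsto\chi(jz)=\chi(z)^j\) is a character of the abelian group \(K\), namely the restriction of \(\chi^j\) to \(K\). We treat two cases.

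\begin{itemize}
\item If this character is trivial on \(K\), that is, \(\chi(jk)=1\) for all \(k\in K\), then \(\Phi_R(\chi)=1\).
\item If it is nontrivial, pick \(k_0\in K\) with \(\chi(jk_0)\neq1\). Then
\[
\bigl(\chi(jk_0)-1\bigr)\Phi_R(\chi)=\frac1{|F_R|}\sum_{z\in F_R}\bigl(\chi(j(z+k_0))-\chi(jz)\bigr),
\]
which is bounded in absolute value by \(|(F_R+k_0)\triangle F_R|/|F_R|\to0\) by the Følner property. Hence \(\Phi_R(\chi)\to0\).
\end{itemize}

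So \(\Phi_R\) converges pointwise to the indicator of
\[
A=\{\chi\in\widehat Z:\chi(jk)=1\text{ for every }k\in K\},
\]
and \(|\Phi_R|\le1\) everywhere.

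Since \(\sigma_{u,w}\) is a finite complex measure, its total variation \(|\sigma_{u,w}|\) is a finite positive measure dominating it. Dominated convergence with respect to \(|\sigma_{u,w}|\) therefore gives
\[
\int\Phi_R\,d\sigma_{u,w}\longrightarrow \sigma_{u,w}(A).
\]
Corollary~\ref{cor:aperiodic-cross-measures-on-K}, applied with the hypotheses \(\sigma_u(\Per(Z))=\sigma_w(\Per(Z))=0\), says that \(|\sigma_{u,w}|(A)=0\). Hence the limit is \(0\), which is the claim.

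The only points needing care are the pointwise limit of \(\Phi_R\) and justifying dominated convergence against a complex measure. The pointwise limit is the standard Følner argument above, written additively in \(K\). Dominated convergence is legitimate because \(|\Phi_R|\le 1\) and \(|\sigma_{u,w}|\) is finite, so we may pass through the Radon–Nikodym density of \(\sigma_{u,w}\) with respect to \(|\sigma_{u,w}|\). No other step should present difficulty.
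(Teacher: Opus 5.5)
Your proposal is correct and follows the paper's proof essentially step by step. Both arguments write the matrix coefficients via the cross-spectral measure $\sigma_{u,w}$, show that the averaged characters converge pointwise to the indicator of $A=\{\chi:\chi(jk)=1\text{ for all }k\in K\}$, and apply dominated convergence against $|\sigma_{u,w}|$ together with Corollary~\ref{cor:aperiodic-cross-measures-on-K}. The only difference is that you write out the F{\o}lner argument for the pointwise limit, which the paper simply asserts.
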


\begin{proof}
For $z\in K$, we have
\[
\langle \pi(jz)u,w\rangle
=
\int_{\widehat Z}\chi(jz)\,d\sigma_{u,w}(\chi).
\]
Therefore
\[
\frac1{|F_R|}\sum_{z\in F_R}
\langle \pi(jz)u,w\rangle
=
\int_{\widehat Z}
\left(
\frac1{|F_R|}\sum_{z\in F_R}\chi(jz)
\right)
d\sigma_{u,w}(\chi).
\]

The functions inside the integral are bounded in absolute value by $1$.
For a fixed character $\chi$, the average
\[
\frac1{|F_R|}\sum_{z\in F_R}\chi(jz)
\]
converges to $0$ unless $\chi(jz)=1$ for every $z\in K$.

Let
\[
A=\{\chi\in\widehat Z:\chi(jz)=1\text{ for every }z\in K\}.
\]
By Lemma~\ref{lem:finite-index-center-aperiodic}, we have
$A\subseteq\Per(Z)$. By Corollary~\ref{cor:aperiodic-cross-measures-on-K},
the total variation measure $|\sigma_{u,w}|$ gives zero mass to $A$.
The integrands are
uniformly bounded by $1$ and converge pointwise to $0$ off the
$|\sigma_{u,w}|$-null set $A$. Hence dominated convergence, applied with
respect to $|\sigma_{u,w}|$, gives the result.
\end{proof}

%%%%%%%%%%%%%%%%%%%%%%%%%%%%%%%%%%%%%%%%%%%%%%%%%%%%%%%%%%%%%%%%%%

\subsection{The central averaging lemma}

We now prove the main averaging estimate for the aperiodic central
spectrum.

\begin{proposition}\label{prop:central-aperiodic-averaging}
Let $K\leq Z$ be finite index, let $h\in G$, and let $u\in V$ satisfy
\[
\sigma_u(\Per(Z))=0.
\]
Then
\[
\inf_{z\in K}\|P_{\langle zh\rangle}u\|=0.
\]
\end{proposition}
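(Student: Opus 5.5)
The plan is to bound \(\|P_{\langle zh\rangle}u\|\) by the norm of a finite ergodic average along \(zh\), and then average the square of that norm over \(z\) in a Følner set \(F_R\subset K\). Doing this keeps every limit interchange finite.

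\textbf{Step 1: finite-average bound.} For any \(g\in G\) and \(N\geq1\), put
\[
A_N^g u=\frac1N\sum_{n=0}^{N-1}\pi(g^n)u .
\]
Since \(P_{\langle g\rangle}\) is self-adjoint and commutes with \(\pi(g)\), and \(P_{\langle g\rangle}\pi(g^n)=P_{\langle g\rangle}\), we have \(P_{\langle g\rangle}A_N^g=P_{\langle g\rangle}\). Hence
\[
\|P_{\langle g\rangle}u\|^2=\langle P_{\langle g\rangle}u,A_N^g u\rangle\leq \|P_{\langle g\rangle}u\|\,\|A_N^g u\|,
\]
so \(\|P_{\langle g\rangle}u\|\leq\|A_N^gu\|\) for every \(N\geq1\). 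This avoids appealing to the infinite limit in von Neumann's ergodic theorem.

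\textbf{Step 2: expand the finite average.} Take \(g=zh\) with \(z\in K\). Since \(z\) is central, \((zh)^k=z^kh^k\), written \(kz\cdot h^k\) in the additive notation of Lemma~\ref{lem:central-fourier-averaging}. Expanding the square gives
\[
\|A_N^{zh}u\|^2=\frac{1}{N^2}\sum_{m,n=0}^{N-1}\bigl\langle \pi\bigl((n-m)z\bigr)\pi(h^{n-m})u,\,u\bigr\rangle .
\]
The \(N\) diagonal terms contribute exactly \(\|u\|^2/N\). There are at most \(2N\) distinct off-diagonal values \(j=n-m\neq0\).

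\textbf{Step 3: kill the off-diagonal terms by central averaging.} Fix \(j\neq0\). By Lemma~\ref{lem:central-spectral-type-preserved}, the vector \(u_j:=\pi(h^j)u\) has \(\sigma_{u_j}=\sigma_u\), so \(\sigma_{u_j}(\Per(Z))=0\). Lemma~\ref{lem:central-fourier-averaging}, applied to the pair \((u_j,u)\) with this \(j\), then gives
\[
\frac1{|F_R|}\sum_{z\in F_R}\langle\pi(jz)u_j,u\rangle\to0 \qquad (R\to\infty).
\]
Only finitely many \(j\) occur for fixed \(N\). Averaging the expansion of Step 2 over \(z\in F_R\) therefore yields
\[
\limsup_{R\to\infty}\frac1{|F_R|}\sum_{z\in F_R}\|P_{\langle zh\rangle}u\|^2
\leq\limsup_{R\to\infty}\frac1{|F_R|}\sum_{z\in F_R}\|A_N^{zh}u\|^2
=\frac{\|u\|^2}{N}.
\]

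\textbf{Step 4: conclude.} Given \(\varepsilon>0\), choose \(N\) with \(\|u\|^2/N<\varepsilon^2/2\), and then \(R\) large enough that the average over \(F_R\) is below \(\varepsilon^2\). Some \(z\in F_R\subset K\) then has \(\|P_{\langle zh\rangle}u\|<\varepsilon\), which proves the proposition.

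The only delicate point is the order of limits, taking \(N\) fixed first and then \(R\to\infty\). Step 1 is designed to handle this: the inequality \(\|P_{\langle g\rangle}u\|\leq\|A_N^gu\|\) holds for every finite \(N\), so no interchange of the infinite ergodic limit with the Følner average is ever needed.
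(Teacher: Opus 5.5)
Your proposal is correct and follows the paper's proof essentially step by step: you bound $\|P_{\langle zh\rangle}u\|$ by a finite ergodic average along $zh$, expand its square using centrality, and kill the off-diagonal terms by averaging over a Følner sequence in $K$ via Lemma~\ref{lem:central-fourier-averaging}, with $N$ fixed before $R\to\infty$. The only differences are cosmetic. You derive $\|P_{\langle g\rangle}u\|\leq\|A_N^g u\|$ from $P_{\langle g\rangle}A_N^g=P_{\langle g\rangle}$ rather than from the orthogonal decomposition, and you reduce the cross terms to the pairs $(\pi(h^j)u,u)$ rather than $(\pi(h^j)u,\pi(h^i)u)$.
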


\begin{proof}
Fix a Følner sequence $(F_R)$ in $K$. For $M\geq1$ and $z\in K$, put
\[
A_M(z)u
:=
\frac1M\sum_{j=0}^{M-1}\pi((zh)^j)u .
\]
Since $Z$ is central, $(zh)^j=z^jh^j$. Thus, using additive notation
for $Z$,
\[
A_M(z)u
=
\frac1M\sum_{j=0}^{M-1}\pi(jz)\pi(h^j)u .
\]

For fixed $z$, the operator $\pi(zh)$ is unitary. By the mean ergodic
theorem applied to this unitary operator, the averages $A_M(z)u$ converge in norm
to $P_{\langle zh\rangle}u$. Moreover, if
$u=P_{\langle zh\rangle}u+u_0$ is the orthogonal decomposition of $u$
into its $\langle zh\rangle$-fixed part and its orthogonal complement,
then the orthogonal complement is invariant under $\pi(zh)$. Hence
\[
A_M(z)u
=
P_{\langle zh\rangle}u+A_M(z)u_0
\]
is an orthogonal decomposition. Therefore
\[
\|P_{\langle zh\rangle}u\|
\leq
\|A_M(z)u\|
\]
for every $M\geq1$.

Expanding the square and averaging over $F_R$, we get
\[
\frac1{|F_R|}\sum_{z\in F_R}\|A_M(z)u\|^2
=
\frac1{M^2}\sum_{i,j=0}^{M-1}
\frac1{|F_R|}\sum_{z\in F_R}
\left\langle
\pi((j-i)z)\pi(h^j)u,\pi(h^i)u
\right\rangle .
\]
If $i=j$, the inner term is $\|u\|^2$. If $i\neq j$, then
Lemma~\ref{lem:central-spectral-type-preserved} gives
\[
\sigma_{\pi(h^j)u}(\Per(Z))=
\sigma_{\pi(h^i)u}(\Per(Z))=0,
\]
and Lemma~\ref{lem:central-fourier-averaging} implies that the
corresponding inner average tends to $0$ as $R\to\infty$.

It follows that
\[
\limsup_{R\to\infty}
\frac1{|F_R|}\sum_{z\in F_R}\|A_M(z)u\|^2
\leq
\frac{\|u\|^2}{M}.
\]
Since $\|P_{\langle zh\rangle}u\|\leq\|A_M(z)u\|$ for every $z$ and
every $M$, we also have
\[
\limsup_{R\to\infty}
\frac1{|F_R|}\sum_{z\in F_R}\|P_{\langle zh\rangle}u\|^2
\leq
\frac{\|u\|^2}{M}
\]
for every $M\geq1$. Letting $M\to\infty$ gives
\[
\lim_{R\to\infty}
\frac1{|F_R|}\sum_{z\in F_R}\|P_{\langle zh\rangle}u\|^2=0.
\]
Therefore, for every $\varepsilon>0$, some $z\in K$ satisfies
$\|P_{\langle zh\rangle}u\|<\varepsilon$.
\end{proof}

%%%%%%%%%%%%%%%%%%%%%%%%%%%%%%%%%%%%%%%%%%%%%%%%%%%%%%%%%%%%%%%%%%

\subsection{Adding a finite periodic component}

The averaging lemma will be used after the periodic part of a vector has
already been controlled. We record the resulting estimate in a form that
will be applied in the central induction step.

\begin{corollary}\label{cor:periodic-aperiodic-combination}
Let $\Omega\subseteq\Per(Z)$ be finite, let
$K_\Omega=\bigcap_{\chi\in\Omega}\ker\chi$, and let $h\in G$. Suppose
that $u\in V_\Omega$ and $w\in V$ satisfy
\[
\sigma_w(\Per(Z))=0.
\]
Then, for every $\varepsilon>0$, there exists $z\in K_\Omega$ such that
\[
\|P_{\langle zh\rangle}(u+w)\|
\leq
\|P_{\langle h\rangle}u\|+\varepsilon.
\]
\end{corollary}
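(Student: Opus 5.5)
The plan is to use the triangle inequality for the projection $P_{\langle zh\rangle}$, which is linear, and then handle the two summands separately:
\[
\|P_{\langle zh\rangle}(u+w)\|\leq \|P_{\langle zh\rangle}u\|+\|P_{\langle zh\rangle}w\|.
\]
The first term should not depend on the choice of $z\in K_\Omega$. The second term should be made smaller than $\varepsilon$ by a suitable choice of $z\in K_\Omega$.

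\textbf{The periodic term.} For every $z\in K_\Omega$, the vector $u$ lies in $V_\Omega$. By Lemma~\ref{lem:finite-periodic-support-factors}, $K_\Omega$ acts trivially on the $G$-invariant subspace $V_\Omega$. Since $z$ is central, $(zh)^n=z^nh^n$, so $\pi((zh)^n)$ and $\pi(h^n)$ agree on $V_\Omega$ for all $n$. The argument in the second half of the proof of Lemma~\ref{lem:periodic-support-quotient-projection} then gives
\[
P_{\langle zh\rangle}u=P_{\langle h\rangle}u.
\]
That argument does not use total ergodicity or the cyclic escape hypothesis, only that $V_\Omega$ and its orthogonal complement are $G$-invariant. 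Hence $\|P_{\langle zh\rangle}u\|=\|P_{\langle h\rangle}u\|$ for every $z\in K_\Omega$.

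\textbf{The aperiodic term.} Here I apply Proposition~\ref{prop:central-aperiodic-averaging} with $K=K_\Omega$, which has finite index in $Z$ as noted before Lemma~\ref{lem:finite-periodic-support-factors}. I take the given $h$ and the vector $w$, which satisfies $\sigma_w(\Per(Z))=0$. This yields $z\in K_\Omega$ with $\|P_{\langle zh\rangle}w\|<\varepsilon$. Combining this with the periodic term gives the claimed bound.

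The only point that needs care is the order of choices. The choice of $z$ must come entirely from the aperiodic part, and this is legitimate only because the periodic estimate is uniform over all of $K_\Omega$. That uniformity is exactly what the centrality of $K_\Omega$ and its trivial action on $V_\Omega$ provide. Apart from this, the proof is a routine assembly of earlier results.
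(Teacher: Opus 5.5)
Your proposal is correct and follows the paper's proof essentially verbatim: you choose $z\in K_\Omega$ from Proposition~\ref{prop:central-aperiodic-averaging} to make $\|P_{\langle zh\rangle}w\|<\varepsilon$, and you use centrality of $K_\Omega$, its trivial action on $V_\Omega$, and $G$-invariance of $V_\Omega^\perp$ to get $P_{\langle zh\rangle}u=P_{\langle h\rangle}u$. The triangle inequality then finishes the argument. You rightly note that the periodic identity holds uniformly over $K_\Omega$ without total ergodicity; the paper relies on this too, repeating the argument inline rather than citing Lemma~\ref{lem:periodic-support-quotient-projection}.
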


\begin{proof}
By Proposition~\ref{prop:central-aperiodic-averaging}, applied with
$K=K_\Omega$, there exists $z\in K_\Omega$ such that
\[
\|P_{\langle zh\rangle}w\|<\varepsilon.
\]

We claim that
\[
P_{\langle zh\rangle}u=P_{\langle h\rangle}u.
\]
Indeed, $K_\Omega$ acts trivially on $V_\Omega$, and $K_\Omega$ is
central in $G$. Hence, for every $n\in\mathbb Z$,
\[
\pi((zh)^n)u=\pi(h^n)u.
\]
Thus the $\langle zh\rangle$-fixed subspace and the $\langle h\rangle$-fixed
subspace agree inside $V_\Omega$. Since $V_\Omega$ is $G$-invariant, its
orthogonal complement is also $G$-invariant, so the orthogonal projections
onto these fixed spaces preserve $V_\Omega$. Therefore the two projections
agree on $u$.

It follows that
\[
\|P_{\langle zh\rangle}(u+w)\|
\leq
\|P_{\langle zh\rangle}u\|+\|P_{\langle zh\rangle}w\|
<
\|P_{\langle h\rangle}u\|+\varepsilon.
\]
\end{proof}

In particular, if $v=u+w+r$, where $u\in V_\Omega$,
$\sigma_w(\Per(Z))=0$, and $r\in V$, then the same choice of $z$ gives
\[
\|P_{\langle zh\rangle}v\|
\leq
\|P_{\langle h\rangle}u\|+\varepsilon+\|r\|.
\]

%%%%%%%%%%%%%%%%%%%%%%%%%%%%%%%%%%%%%%%%%%%%%%%%%%%%%%%%%%%%%%%%%%

\section{The central induction step}

Throughout this section, let
\[
1\longrightarrow Z\longrightarrow G\xrightarrow{\,p\,}Q\longrightarrow 1
\]
be an exact sequence, where $G$ is an infinite finitely generated
nilpotent group and $Z\leq Z(G)$ is a finitely generated abelian central
subgroup. Let $(V,\pi)$ be a totally ergodic unitary representation of
$G$. We keep the notation of Sections~3 and~4 for the spectral
decomposition over $\widehat Z$.

The standing assumption in this section is that every infinite
finite-index subgroup of $Q$ has the cyclic escape property.
Our goal is to prove that $G$ has the cyclic escape property.

\subsection{Controlling the periodic approximation}

We first show that a finite periodic central spectral component can be
controlled by the induction hypothesis.

\begin{lemma}\label{lem:periodic-approximation-controlled}
Let $\Omega\subseteq\Per(Z)$ be finite, and let $u\in V_\Omega$. Then,
for every $\varepsilon>0$, there exists $h\in G$ such that
\[
\|P_{\langle h\rangle}u\|<\varepsilon
\]
and $\langle hK_\Omega\rangle$ is an infinite cyclic subgroup of
$G/K_\Omega$, where $K_\Omega=\bigcap_{\chi\in\Omega}\ker\chi$.
\end{lemma}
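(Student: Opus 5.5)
The proof will combine Lemma~\ref{lem:central-finite-index-quotient} with Lemma~\ref{lem:periodic-support-quotient-projection}. Since \(\Omega\subseteq\Per(Z)\) is finite, \(K_\Omega\) has finite index in \(Z\), as recorded before Lemma~\ref{lem:finite-periodic-support-factors}. Under the standing assumptions of this section, \(Q\) is infinite and every infinite finite-index subgroup of \(Q\) has the cyclic escape property. Lemma~\ref{lem:central-finite-index-quotient}, applied with \(K=K_\Omega\), therefore shows that \(G/K_\Omega\) has the cyclic escape property.

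Infiniteness of \(Q\) has to be justified, since the section only assumes the cyclic escape property for infinite finite-index subgroups of \(Q\). I would handle this as follows. The lemma is invoked from the proof of Proposition~\ref{prop:central-induction-step}, where \(Q\) is assumed infinite. If one insists on treating finite \(Q\) here as well, the statement should instead be reduced to Lemma~\ref{lem:finite-quotient-central-case}. I will simply note that \(Q\) is infinite, as in Proposition~\ref{prop:central-induction-step}, so that Lemma~\ref{lem:central-finite-index-quotient} applies.

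Next, \((V,\pi)\) is totally ergodic, and \(u\in V_\Omega\). Lemma~\ref{lem:periodic-support-quotient-projection} then applies directly: its hypotheses are exactly total ergodicity of \(\pi\) and the cyclic escape property of \(G/K_\Omega\). It produces \(h\in G\) such that \(\langle hK_\Omega\rangle\) is infinite cyclic in \(G/K_\Omega\) and \(\|P_{\langle h\rangle}u\|<\varepsilon\). This is precisely the conclusion, and the proof is complete.

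The main point to verify is the hypothesis check for Lemma~\ref{lem:central-finite-index-quotient}. That lemma rests on Lemma~\ref{lem:finite-kernel-transfer}, which needs \(G/K_\Omega\) to be residually finite; this holds because it is finitely generated nilpotent. The transfer step also uses Lemma~\ref{lem:finite-index-subgroup-cyclic-escape} for the finite-index subgroup \(q(E_0)\) of \(E/F\cong Q\). Beyond these checks, the argument is just assembling earlier results.
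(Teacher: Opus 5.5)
Your proof is correct and follows the paper's argument: both obtain the cyclic escape property of $G/K_\Omega$ from Lemma~\ref{lem:central-finite-index-quotient}. Where the paper re-proves inline that the quotient representation on $V_\Omega$ is totally ergodic and then lifts a generator, you cite Lemma~\ref{lem:periodic-support-quotient-projection}, which already contains exactly that step. Your remark that $Q$ must be infinite is a fair point that the paper leaves implicit. However, your aside about reducing the finite-$Q$ case to Lemma~\ref{lem:finite-quotient-central-case} would not save the statement: when $Q$ is finite, $G/K_\Omega$ is finite and so has no infinite cyclic subgroup.
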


\begin{proof}
By Lemma~\ref{lem:central-finite-index-quotient}, the quotient
\(G/K_\Omega\) has the cyclic escape property. The
representation of \(G\) on \(V_\Omega\) factors through \(G/K_\Omega\)
by Lemma~\ref{lem:finite-periodic-support-factors}.

We claim that the resulting representation of \(G/K_\Omega\) on
\(V_\Omega\) is totally ergodic. Let
\(M/K_\Omega\leq G/K_\Omega\) be finite index. Then \(M\leq G\) has
finite index. Since the original representation of \(G\) is totally
ergodic,
\[
        V^M=\{0\}.
\]
The fixed space of \(M/K_\Omega\) in \(V_\Omega\) is
\[
        (V_\Omega)^{M/K_\Omega}
        =
        \{u\in V_\Omega:\pi(m)u=u\text{ for every }m\in M\}
        =
        V_\Omega\cap V^M
        =
        \{0\}.
\]
Thus the quotient representation is totally ergodic.

Applying the cyclic escape property of \(G/K_\Omega\) to
\(u\in V_\Omega\), we find an infinite cyclic subgroup of
\(G/K_\Omega\) whose fixed projection of \(u\) has norm less than
\(\varepsilon\). Choose \(h\in G\) whose image generates this cyclic
subgroup. Since \(K_\Omega\) acts trivially on \(V_\Omega\), the fixed
projection for \(\langle h\rangle\) on \(u\) agrees with the fixed
projection for \(\langle hK_\Omega\rangle\) in the quotient
representation. Hence
\[
        \|P_{\langle h\rangle}u\|<\varepsilon.
\]
\end{proof}

%%%%%%%%%%%%%%%%%%%%%%%%%%%%%%%%%%%%%%%%%%%%%%%%%%%%%%%%%%%%%%%%%%

\subsection{Proof of the central induction step}

We now prove Proposition~\ref{prop:central-induction-step}.

\begin{proof}[Proof of Proposition~\ref{prop:central-induction-step}]
Let $(V,\pi)$ be a totally ergodic unitary representation of $G$, and
let $v\in V$. We must show that, for every $\varepsilon>0$, there is an
infinite cyclic subgroup $C\leq G$ such that $\|P_Cv\|<\varepsilon$.

Choose $\eta>0$ so small that $4\eta<\varepsilon$. By
Lemma~\ref{lem:periodic-part-finite-approximation}, there exists a
finite set $\Omega\subseteq\Per(Z)$ such that, with
$u=E_Z(\Omega)v$, we have
\[
\|v_{\mathrm{per}}-u\|<\eta.
\]
Put $w=v_{\mathrm{ap}}$ and $r=v_{\mathrm{per}}-u$. Then
\[
v=u+w+r,
\qquad
\sigma_w(\Per(Z))=0,
\qquad
\|r\|<\eta.
\]

By Lemma~\ref{lem:periodic-approximation-controlled}, there exists
$h\in G$ such that
\[
\|P_{\langle h\rangle}u\|<\eta
\]
and $\langle hK_\Omega\rangle$ is an infinite cyclic subgroup of
$G/K_\Omega$.

By Corollary~\ref{cor:periodic-aperiodic-combination}, applied to
$u,w$ and this $h$, there exists $z\in K_\Omega$ such that
\[
\|P_{\langle zh\rangle}(u+w)\|
<
\|P_{\langle h\rangle}u\|+\eta
<
2\eta.
\]
Since projections are contractions,
\[
\|P_{\langle zh\rangle}v\|
\leq
\|P_{\langle zh\rangle}(u+w)\|+\|r\|
<
3\eta
<
\varepsilon.
\]

It remains only to check that the subgroup $\langle zh\rangle$ is
infinite cyclic. Since $z\in K_\Omega$, the elements $zh$ and $h$ have
the same image in $G/K_\Omega$. By construction,
$\langle hK_\Omega\rangle$ is infinite cyclic. Hence the image of
$zh$ in $G/K_\Omega$ has infinite order, and therefore $zh$ itself has
infinite order in $G$. Thus $\langle zh\rangle$ is an infinite cyclic
subgroup.

Thus $G$ has the cyclic escape property.
\end{proof}

%%%%%%%%%%%%%%%%%%%%%%%%%%%%%%%%%%%%%%%%%%%%%%%%%%%%%%%%%%%%%%%%%%

\section{The abelian base case and completion}

In this section we prove the abelian base case and then complete the
proof of Theorem~\ref{thm:main}. The abelian case is the
representation-theoretic form of the directional expansion theorem for
totally ergodic actions of finitely generated abelian groups; see
Björklund and Fish~\cite{Bjorklund_Fish_2024} for a more general version. We include the proof in the
unitary representation language used in this paper.

\subsection{The abelian base case}

We first record a simple averaging lemma for finitely generated free
abelian groups.

\begin{lemma}\label{lem:abelian-direction-selection}
Let $A\cong\mathbb Z^r$ with $r\geq1$, and let $\rho$ be a finite
positive Borel measure on $\widehat A$ such that $\rho(\Per(A))=0$.
Then, for every $\delta>0$, there exists a nonzero $a\in A$ such that
\[
\rho(\Ann(a))<\delta.
\]
\end{lemma}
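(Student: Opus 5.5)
The plan is to average $\rho(\Ann(a))$ over a large box of directions and show that the average tends to zero. Here $\Ann(a)=\{\chi\in\widehat A:\chi(a)=1\}$. Identify $\widehat A$ with the torus $\mathbb T^r=(\mathbb R/\mathbb Z)^r$, so that $\chi_\theta(a)=e^{2\pi i\langle a,\theta\rangle}$. Put $B_N=\{a\in\mathbb Z^r:0\le a_i<N\}$. Fubini gives
\[
\frac1{|B_N|}\sum_{a\in B_N}\rho(\Ann(a))
=\int_{\mathbb T^r}\frac{|\{a\in B_N:\langle a,\theta\rangle\in\mathbb Z\}|}{|B_N|}\,d\rho(\theta).
\]
The term $a=0$ contributes only $1/|B_N|$ to the count, and $\Ann(0)=\widehat A$ is excluded in the end because we only need to pick a nonzero $a$. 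It therefore suffices to show that the integral tends to $0$. The integrand lies in $[0,1]$, so by dominated convergence it is enough to show that it tends to $0$ for every $\theta\notin\Per(A)$. The set $\Per(A)$ is $\rho$-null by hypothesis.

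Next fix $\theta$ and let $\Lambda_\theta=\{a\in\mathbb Z^r:\langle a,\theta\rangle\in\mathbb Z\}=\ker\chi_\theta$, which is a subgroup of $\mathbb Z^r$. If $\Lambda_\theta$ had rank $r$, it would have finite index $q$. Then $qe_i\in\Lambda_\theta$ for every $i$, so $\chi_\theta^q=1$ and $\theta\in\Per(A)$. Hence for aperiodic $\theta$ the subgroup $\Lambda_\theta$ has rank at most $r-1$. The key counting claim is that a subgroup $\Lambda\le\mathbb Z^r$ of rank at most $r-1$ satisfies $|\Lambda\cap B_N|/N^r\to0$. To see this, note that $\Lambda$ is contained in a rational hyperplane $\{a:\langle a,b\rangle=0\}$ with $b\in\mathbb Z^r\setminus\{0\}$. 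Choose a coordinate $k$ with $b_k\neq0$. The remaining coordinates of $a$ then determine $a_k$, so $|\Lambda\cap B_N|\le N^{r-1}$. This bound is uniform in $\theta$, although uniformity is not needed for dominated convergence.

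Combining these steps, the box average of $\rho(\Ann(a))$ over nonzero $a\in B_N$ tends to $0$. In particular, for large $N$ some nonzero $a\in B_N$ satisfies $\rho(\Ann(a))<\delta$. Two measurability points need checking: $\Ann(a)$ is closed, and $\Per(A)$ is countable, hence Borel. Both are immediate. The only mildly delicate step is the rank argument, namely deducing that $\Lambda_\theta$ has infinite index from the aperiodicity of $\theta$. I expect it to be routine via the finite-index computation above. The case $r=1$ needs no separate treatment: there $\Lambda_\theta=\{0\}$ for aperiodic $\theta$, so the count is $1/N$.
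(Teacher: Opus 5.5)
Your proposal is correct and follows essentially the same route as the paper: you average $\rho(\Ann(a))$ over boxes, apply dominated convergence off the $\rho$-null set $\Per(A)$, and use that the kernel of an aperiodic character has rank less than $r$ and hence zero density in boxes. Your rational-hyperplane count $|\Lambda\cap B_N|\le N^{r-1}$ makes explicit the zero-density step, which the paper only asserts.
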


\begin{proof}
Choose a Følner sequence $(F_R)$ in $A$ given by boxes in some basis of
$A$. We claim that, for every $\chi\in\widehat A\setminus\Per(A)$,
\[
\frac{1}{|F_R|}\bigl|\{a\in F_R:\chi(a)=1\}\bigr|\longrightarrow0.
\]
Indeed, if $\chi$ is not periodic, then its kernel has infinite index in
$A$. Hence $\ker\chi$ has rank strictly smaller than $r$, and therefore
has zero density in the boxes $F_R$.

By dominated convergence,
\[
\frac1{|F_R|}\sum_{a\in F_R}\rho(\Ann(a))
=
\int_{\widehat A}
\frac{1}{|F_R|}\bigl|\{a\in F_R:\chi(a)=1\}\bigr|
\,d\rho(\chi)
\longrightarrow0.
\]
Since $\Ann(0)=\widehat A$, the contribution of $a=0$ to the average is
at most $\rho(\widehat A)/|F_R|$, which tends to $0$. Therefore
\[
\frac1{|F_R\setminus\{0\}|}
\sum_{a\in F_R\setminus\{0\}}\rho(\Ann(a))
\longrightarrow0.
\]
For all sufficiently large $R$, this average is less than $\delta$.
Hence some nonzero $a\in F_R$ satisfies $\rho(\Ann(a))<\delta$.
\end{proof}

\begin{proof}[Proof of Proposition~\ref{prop:abelian-base}]
Let $G$ be an infinite finitely generated abelian group. By the
structure theorem for finitely generated abelian groups, $G$ contains a
finite-index subgroup $A\cong\mathbb Z^r$ with $r\geq1$. By
Lemma~\ref{lem:finite-index-overgroup-cyclic-escape}, it is enough to prove that $A$
has the cyclic escape property. We may therefore assume that
$G\cong\mathbb Z^r$.

Let $(V,\pi)$ be a totally ergodic unitary representation of $G$, and
let $v\in V$. Let $\sigma_v$ be the spectral measure of $v$ on
$\widehat G$. We first note that $\sigma_v(\Per(G))=0$. Indeed,
\[
\Per(G)=\bigcup_{d\geq1}\Ann(dG),
\]
and each subgroup $dG$ has finite index in $G$. Since the representation
is totally ergodic, $V^{dG}=\{0\}$ for every $d\geq1$. The projection
onto $V^{dG}$ is the spectral projection $E_G(\Ann(dG))$, so
$\sigma_v(\Ann(dG))=0$ for every $d$.

Let $\varepsilon>0$. By Lemma~\ref{lem:abelian-direction-selection},
there exists a nonzero $g\in G$ such that
\[
\sigma_v(\Ann(g))<\varepsilon^2.
\]
Since $G$ is free abelian, $\langle g\rangle$ is infinite cyclic. The
projection onto $V^{\langle g\rangle}$ is the spectral projection onto
$\Ann(g)$. Therefore
\[
\|P_{\langle g\rangle}v\|^2
=
\sigma_v(\Ann(g))
<
\varepsilon^2.
\]
Thus $\|P_{\langle g\rangle}v\|<\varepsilon$, proving the directional
projection property.
\end{proof}

%%%%%%%%%%%%%%%%%%%%%%%%%%%%%%%%%%%%%%%%%%%%%%%%%%%%%%%%%%%%%%%%%%

\subsection{Completion of the nilpotent theorem}
\label{subsec:completion}

We now prove Theorem~\ref{thm:main}.

\begin{proof}[Proof of Theorem~\ref{thm:main}]
We prove that \(G\) has the cyclic escape property by induction on the
nilpotency class of \(G\).

If \(G\) has nilpotency class one, then \(G\) is finitely generated
abelian. Since \(G\) is infinite, Proposition~\ref{prop:abelian-base}
gives the cyclic escape property for \(G\).

Assume now that the theorem has been proved for all infinite finitely
generated nilpotent groups of nilpotency class at most \(r-1\), and let
\(G\) be an infinite finitely generated nilpotent group of class
\(r\geq 2\). Put
\[
        Z=\gamma_r(G).
\]
By Lemma~\ref{lem:last-lower-central}, \(Z\) is a finitely generated
abelian central subgroup of \(G\), and
\[
        Q=G/Z
\]
is finitely generated nilpotent of class at most \(r-1\).

If \(Q\) is finite, then Lemma~\ref{lem:finite-quotient-central-case}
gives the cyclic escape property for \(G\).

Assume that \(Q\) is infinite. Let \(M\leq Q\) be an infinite
finite-index subgroup. Then \(M\) is again a finitely generated
nilpotent group of nilpotency class at most \(r-1\). By the induction
hypothesis, \(M\) has the cyclic escape property. Therefore every
infinite finite-index subgroup of \(Q\) has the cyclic escape property.

The hypotheses of Proposition~\ref{prop:central-induction-step} are now
satisfied for the central extension
\[
        1\longrightarrow Z\longrightarrow G\longrightarrow Q
        \longrightarrow 1 .
\]
Hence \(G\) has the cyclic escape property.

This proves that every infinite finitely generated nilpotent group has
the cyclic escape property. The final assertion, that every totally
ergodic probability-measure-preserving action of \(G\) is directionally
expansive, follows from
Lemma~\ref{lem:cyclic-escape-implies-directional-expansion}.
\end{proof}

%%%%%%%%%%%%%%%%%%%%%%%%%%%%%%%%%%%%%%%%%%%%%%%%%%%%%%%%%%%%%%%%%%

\section{Higher-rank lattices}
\label{sec:higher-rank-lattices}

In this section we prove Theorem~\ref{thm:higher-rank-lattices} and
Corollary~\ref{cor:SL-PSL-PGLnZ}. The argument is independent of the
nilpotent argument above, but it has the same Hilbert-space target: one
must find cyclic subgroups whose fixed-vector projections are small.

The proof is based on stationary character rigidity for higher-rank
lattices, due to Boutonnet--Houdayer. We first prove the weakly mixing
case. Under the additional finite-image hypothesis for
finite-dimensional unitary representations, total ergodicity implies
weak mixing, giving the cyclic escape property.

Throughout the main part of the section, \(L\) denotes a connected
simple Lie group with trivial centre, no compact factors, and real rank
at least two, and \(G<L\) denotes a lattice. The finite-centre reduction needed for part (2) of
Theorem~\ref{thm:higher-rank-lattices} is carried out at the end of the
section.

\subsection{\texorpdfstring{\(C^*\)-algebras, states, and traces}{C*-algebras, states, and traces}}
\label{subsec:cstar-states-traces}

Let \((V,\pi)\) be a unitary representation of a countable group \(G\).
We denote by
\[
        C^*_\pi(G)\subset B(V)
\]
the norm-closed unital \(*\)-subalgebra generated by \(\pi(G)\). Thus
\(C^*_\pi(G)\) is the closure, in the operator norm on \(B(V)\), of the
complex linear span of \(\{\pi(g):g\in G\}\).

A \emph{state} on a unital \(C^*\)-algebra \(A\) is a linear functional
\(\omega:A\to \mathbb C\) such that \(\omega(1)=1\) and
\[
        \omega(a^*a)\geq 0
        \qquad(a\in A).
\]
The state space \(S(A)\) is weak-* compact by the Banach--Alaoglu
theorem. If \(\xi\in V\) is a unit vector, then
\[
        \omega_\xi(a)=\langle a\xi,\xi\rangle
        \qquad(a\in C^*_\pi(G))
\]
is a state, called the vector state associated to \(\xi\).

A state \(\omega\) on \(A\) is \emph{tracial} if
\[
        \omega(ab)=\omega(ba)
        \qquad(a,b\in A).
\]

We also use the left regular representation
\[
        \lambda_G:G\to \mathcal U(\ell^2(G)),
        \qquad
        \lambda_G(s)\delta_h=\delta_{sh}.
\]
The associated reduced group \(C^*\)-algebra is
\[
        C^*_{\lambda_G}(G)\subset B(\ell^2(G)).
\]
It has a canonical tracial state, the regular trace,
\[
        \tau_{\lambda_G}(a)=\langle a\delta_e,\delta_e\rangle.
\]
In particular,
\[
        \tau_{\lambda_G}(\lambda_G(s))=
        \begin{cases}
        1,& s=e,\\
        0,& s\neq e.
        \end{cases}
\]

The group \(G\) acts on \(C^*_\pi(G)\) by conjugation:
\[
        \alpha_g(a)=\pi(g)a\pi(g)^{-1}
        \qquad(a\in C^*_\pi(G)).
\]
Equivalently,
\[
        \alpha_g(\pi(s))=\pi(gsg^{-1})
        \qquad(g,s\in G).
\]
This induces an affine action of \(G\) on \(S(C^*_\pi(G))\) by
\[
        g\omega=\omega\circ\alpha_{g^{-1}}.
\]

%%%%%%%%%%%%%%%%%%%%%%%%%%%%%%%%%%%%%%%%%%%%%%%%%%%%%%%%%%%%%%%%%%

\subsection{Furstenberg measures and stationary characters}
\label{subsec:furstenberg-stationary-characters}

Let \(P<L\) be a minimal parabolic subgroup. The homogeneous space
\(L/P\) is the Furstenberg boundary of \(L\). Fix a maximal compact
subgroup \(K<L\), and let \(\nu\) be the \(K\)-invariant probability
measure on \(L/P\).

A probability measure \(\kappa\) on \(G\) is called a \emph{Furstenberg
measure} if its support generates \(G\) as a semigroup and the Poisson
boundary of the random walk \((G,\kappa)\) is \((L/P,\nu)\). Such
measures exist by Furstenberg's boundary theory; see
\cite[Chapter~VI]{Margulis_1991}. We fix such a measure \(\kappa\) for
the rest of the section.

Let \(G\) act continuously and affinely on a compact convex set
\(\mathcal K\). A point \(x\in\mathcal K\) is called
\emph{\(\kappa\)-stationary} if
\[
        x=\sum_{g\in G}\kappa(g)\,gx.
\]
Equivalently, for every continuous affine function \(F\) on \(\mathcal K\),
\[
        F(x)=\sum_{g\in G}\kappa(g)F(gx).
\]

\begin{lemma}
\label{lem:stationary-point-exists}
Let \(\mathcal K\) be a nonempty weak-* compact convex \(G\)-invariant subset of
the state space \(S(A)\) of a unital \(C^*\)-algebra \(A\), where \(G\)
acts on \(S(A)\) by affine weak-* homeomorphisms. Then \( \mathcal K\) contains a
\(\kappa\)-stationary point.
\end{lemma}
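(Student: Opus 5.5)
The plan is the standard Markov--Kakutani-type argument: I would build a stationary point as a limit of Cesàro averages of convolution powers of \(\kappa\). Define the averaging map
\[
        T:\mathcal K\to\mathcal K,\qquad T(x)=\sum_{g\in G}\kappa(g)\,gx .
\]
First I need \(T\) to be well defined and to map \(\mathcal K\) into itself. For \(x\in\mathcal K\), the partial sums \(\sum_{g\in F}\kappa(g)\,gx\) over finite \(F\subset G\) are positive functionals of total mass \(\kappa(F)\le1\). Evaluated on a fixed \(a\in A\), these sums converge absolutely because \(|(gx)(a)|\le\|a\|\). The limit functional is a state. It is the weak-* limit of the normalised finite convex combinations \(\kappa(F)^{-1}\sum_{g\in F}\kappa(g)\,gx\in\mathcal K\), since \(\mathcal K\) is convex and \(G\)-invariant. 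As \(\mathcal K\) is weak-* closed, \(T(x)\in\mathcal K\). The map \(T\) is also affine, because each \(g\) acts affinely.

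Next, fix any \(x_0\in\mathcal K\) and set
\[
        y_N=\frac1N\sum_{n=0}^{N-1}T^n x_0\in\mathcal K,
\]
using convexity. By weak-* compactness, some subnet \(y_{N_\alpha}\) converges to a point \(y\in\mathcal K\). For every \(a\in A\),
\[
        |(Ty_N-y_N)(a)|=\frac1N|(T^Nx_0-x_0)(a)|\le\frac{2\|a\|}{N}\to0 .
\]

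The remaining step, and the only one needing care, is to show \(Ty=y\), which requires weak-* continuity of \(T\). Take a net \(x_\beta\to x\) and fix \(a\in A\). Choose a finite \(F\) with \(\kappa(G\setminus F)<\epsilon\). Then
\[
        |(Tx_\beta)(a)-(Tx)(a)|\le\sum_{g\in F}\kappa(g)\,|(gx_\beta)(a)-(gx)(a)|+2\epsilon\|a\| .
\]
Each action map \(x\mapsto gx\) is a weak-* homeomorphism, so the finite sum tends to \(0\). Hence \(T\) is weak-* continuous.

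It follows that \(Ty_{N_\alpha}\to Ty\). We also have \(Ty_{N_\alpha}-y_{N_\alpha}\to0\) weak-*, so \(Ty=y\). This says exactly that \(y=\sum_g\kappa(g)\,gy\), so \(y\) is a \(\kappa\)-stationary point of \(\mathcal K\).
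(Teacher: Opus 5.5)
Your proposal is correct and follows essentially the same route as the paper. Both define the averaging map \(T x=\sum_{g}\kappa(g)\,gx\), check that it maps \(\mathcal K\) into itself and is weak-* continuous by truncating the tail of \(\kappa\), and take a weak-* cluster point of the Ces\`aro averages \(\frac1N\sum_{n<N}T^n x_0\), which \(T\) fixes. You supply a little more detail than the paper, namely the explicit \(2\epsilon\|a\|\) tail estimate and the normalised finite sums showing \(T(x)\in\mathcal K\), but the argument is the same.
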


\begin{proof}
Define an affine map \(T:\mathcal K\to \mathcal K\) by
\[
        T\omega=\sum_{g\in G}\kappa(g)\,g\omega .
\]
The sum is understood in the weak-* topology. The map \(T\) is weak-*
continuous, since for every \(a\in A\),
\[
        (T\omega)(a)=\sum_{g\in G}\kappa(g)(g\omega)(a),
\]
and the series converges uniformly on \(S(A)\) after truncating the
\(\kappa\)-tail. Also \(T(\mathcal K)\subset \mathcal K\), because \(\mathcal K\) is weak-* closed
and convex.

Fix \(\omega_0\in \mathcal K\), and set
\[
        \omega_N=\frac1N\sum_{n=0}^{N-1}T^n\omega_0 .
\]
Let \(\omega\) be a weak-* cluster point of \((\omega_N)\). Since
\[
        T\omega_N-\omega_N
        =
        \frac1N(T^N\omega_0-\omega_0),
\]
we have \(T\omega_N-\omega_N\to 0\) weak-*. Hence \(T\omega=\omega\).
Thus \(\omega\) is \(\kappa\)-stationary.
\end{proof}

A normalized positive-definite function is a function
\(\chi:G\to\mathbb C\) such that \(\chi(e)=1\) and
\[
        \sum_{i,j=1}^n \overline{c_i}c_j\chi(g_i^{-1}g_j)\geq 0
\]
for all \(n\geq 1\), \(c_1,\ldots,c_n\in\mathbb C\), and
\(g_1,\ldots,g_n\in G\). A normalized positive-definite function
\(\chi\) is called a \emph{\(\kappa\)-character} if
\[
        \chi(s)=\sum_{g\in G}\kappa(g)\chi(g^{-1}sg)
        \qquad(s\in G).
\]
Every conjugation-invariant normalized positive-definite function is a
\(\kappa\)-character.

If \(\omega\) is a \(\kappa\)-stationary state on \(C^*_\pi(G)\), then
\[
        \chi_\omega(s)=\omega(\pi(s))
        \qquad(s\in G)
\]
is a \(\kappa\)-character. Indeed, \(\chi_\omega\) is normalized and
positive definite, and stationarity gives
\[
\begin{aligned}
        \chi_\omega(s)
        &=
        \sum_{g\in G}\kappa(g)(g\omega)(\pi(s))  \\
        &=
        \sum_{g\in G}\kappa(g)\omega(\alpha_{g^{-1}}(\pi(s))) \\
        &=
        \sum_{g\in G}\kappa(g)\omega(\pi(g^{-1}sg)) \\
        &=
        \sum_{g\in G}\kappa(g)\chi_\omega(g^{-1}sg).
\end{aligned}
\]

%%%%%%%%%%%%%%%%%%%%%%%%%%%%%%%%%%%%%%%%%%%%%%%%%%%%%%%%%%%%%%%%%%

\subsection{The Boutonnet--Houdayer input}
\label{subsec:BH-input}

We now record the rigidity theorem used in the conjugacy averaging
argument.

\begin{theorem}[Boutonnet--Houdayer]
\label{thm:BH-input}
Let \(L\) be a connected simple Lie group with trivial centre, no
compact factors, and real rank at least two. Let \(G<L\) be a lattice,
and let \(\kappa\) be the Furstenberg measure fixed above.

\begin{enumerate}
\item
Every \(\kappa\)-character of \(G\) is conjugation invariant.

\item
Let \((V,\pi)\) be a weakly mixing unitary representation of \(G\).
Then \(C^*_\pi(G)\) has a unique tracial state \(\tau_\pi\). This trace
is determined on the canonical generators by
\[
        \tau_\pi(\pi(g))=
        \begin{cases}
        1, & g=e,\\
        0, & g\neq e .
        \end{cases}
\]
\end{enumerate}
\end{theorem}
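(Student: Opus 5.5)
The plan is to obtain part (1) directly from Boutonnet--Houdayer's stationary character rigidity theorem, and then to derive part (2) from part (1), from their character rigidity dichotomy, and from Kazhdan's property (T).

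\textbf{Part (1).} This is the main theorem of Boutonnet--Houdayer on stationary characters of higher-rank lattices. The proof proceeds in three stages.
\begin{enumerate}
\item The GNS construction attached to a \(\kappa\)-character \(\chi\) gives a \(\kappa\)-stationary state on a \(C^*\)-algebra.
\item The Poisson boundary \((L/P,\nu)\) is used to produce a boundary-equivariant structure. The required input is Nevo--Zimmer type rigidity for stationary actions and the noncommutative factor theorem for higher rank.
\item One shows that the resulting state is invariant, not merely stationary.
\end{enumerate}
I would not reprove this here. The plan is to check that the present hypotheses (\(L\) simple, trivial centre, no compact factors, real rank at least two, \(\kappa\) a Furstenberg measure) match theirs, and to quote the theorem. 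This step is the genuine obstacle: it is deep, and only the citation is feasible within this paper.

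\textbf{Existence of a trace in part (2).} Let \((V,\pi)\) be weakly mixing. The state space \(S(C^*_\pi(G))\) is weak-* compact, convex and \(G\)-invariant. By Lemma~\ref{lem:stationary-point-exists}, it contains a \(\kappa\)-stationary state \(\omega\). As shown above, \(\chi_\omega=\omega\circ\pi\) is then a \(\kappa\)-character, so part (1) makes it conjugation invariant. Conjugation invariance gives \(\omega(\pi(s)\pi(t))=\omega(\pi(t)\pi(s))\) for all \(s,t\in G\). By linearity and norm continuity, \(\omega\) is tracial. Hence tracial states exist.

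\textbf{Uniqueness and the formula in part (2).} Let \(\tau\) be any tracial state on \(C^*_\pi(G)\), and put \(\varphi=\tau\circ\pi\), which is a character of \(G\).
\begin{enumerate}
\item By Boutonnet--Houdayer's character rigidity, every extremal character is either supported on the centre of \(G\) or comes from a finite-dimensional unitary representation. By Borel density, \(G\) has trivial centre, since \(L\) does. So the only centrally supported extremal character is \(\delta_e\), and the decomposition of \(\varphi\) into extremal characters reads
\[
\varphi=t\,\varphi_{\mathrm{fd}}+(1-t)\,\delta_e .
\]
Here \(\varphi_{\mathrm{fd}}\) is a barycentre of characters of finite-dimensional representations.
\item Suppose \(t>0\). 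The GNS representation of \(\tau\) is weakly contained in \(\pi\), because \(\tau\) is a state on \(C^*_\pi(G)\). It therefore weakly contains some nonzero finite-dimensional irreducible representation \(\rho\). Consequently \(\rho\prec\pi\), so \(\rho\otimes\bar\rho\prec\pi\otimes\bar\rho\), and since \(\rho\otimes\bar\rho\) contains the trivial representation, \(\pi\otimes\bar\rho\) has almost invariant vectors.
\item Higher-rank lattices have Kazhdan's property (T). Hence \(\pi\otimes\bar\rho\) has a nonzero invariant vector. Equivalently, \(\pi\) contains a copy of \(\rho\), which is a nonzero finite-dimensional subrepresentation. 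This contradicts weak mixing, so \(t=0\).
\end{enumerate}
Thus \(\varphi=\delta_e\), which is the stated formula. Since \(\tau\) is determined by its values on \(\pi(G)\), whose span is dense in \(C^*_\pi(G)\), this also gives uniqueness.

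The one bookkeeping point to verify is that passing from weak containment of the character to weak containment of an actual finite-dimensional \(\rho\) is legitimate. I would handle it by noting that the almost periodic part of \(\varphi\) is a norm limit of finite sums of finite-dimensional characters. So some such \(\rho\) appears with positive weight and is weakly contained in the GNS representation of \(\tau\).
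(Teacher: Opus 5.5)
Your proposal is correct. For part (1) it agrees with the paper, which simply quotes Theorem~A of Boutonnet--Houdayer. For part (2) you take a genuinely different route. The paper quotes their Corollary~D (tracial states on $C^*_\pi(G)$ factor through the reduced algebra up to finite-dimensional character terms), together with their Lemma~6.5 to discard those terms. You instead rebuild (2) from four ingredients: part (1), the dichotomy for extremal characters (Boutonnet--Houdayer's Theorem~B, or Peterson's theorem), a Choquet decomposition of $\tau\circ\pi$, and property (T).

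\textbf{What your route buys.} Your existence step is the same stationary-state mechanism the paper uses later in Lemma~\ref{lem:higher-rank-conjugacy-averaging}. Your uniqueness step forces every trace to equal $\delta_e$ on $G$, so you obtain $\lambda_G\prec\pi$ as a byproduct. This is exactly the point the paper declares to be ``part of the theorem'', and its $\ker\pi$ remark explains it only partially: injectivity of $\pi$ is necessary but not sufficient for $\lambda_G\prec\pi$. Your route also shows that higher rank is used a second time, through (T). For groups without (T), such as $\mathbb Z$, a weakly mixing representation can weakly contain finite-dimensional ones, and $C^*_\pi(\mathbb Z)$ can carry many traces. The cost is one extra deep citation (the character dichotomy) beyond Theorem~A.

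\textbf{Your bookkeeping point.} Writing the almost periodic part as a norm limit of finite sums requires the decomposing measure to be atomic. That holds here, because a group with (T) has only countably many finite-dimensional irreducible unitary representations up to equivalence (a theorem of Wang). Alternatively, argue as follows. Since $t\varphi_{\mathrm{fd}}\le\varphi$ as positive-definite functions, the GNS representation of $\varphi_{\mathrm{fd}}$ is a subrepresentation of that of $\varphi$. A kernel-by-kernel argument in the separable algebra $C^*(G)$ then shows that almost every finite-dimensional $\rho$ in the decomposition of $\varphi_{\mathrm{fd}}$ is weakly contained in it.
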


The assertion that the assignment \(\pi(g)\mapsto\lambda_G(g)\) is
well defined is part of the theorem. In the present setting, one can also
see why no nontrivial element is collapsed by \(\pi\). Indeed, if
\(N=\ker\pi\), then \(N\triangleleft G\). By Margulis' normal subgroup
theorem, \(N\) is either finite or finite index. The finite-index case
would force \(\pi\) to factor through a finite group, contradicting weak
mixing. The finite case is trivial: a finite normal subgroup of a lattice
in a centre-free semisimple Lie group is contained in the centre, by
Borel density. Since \(L\) has trivial centre, \(\ker\pi=\{e\}\).

\begin{proof}[Reference]
The first statement is \cite[Theorem~A]{Boutonnet_Houdayer_2021}. For the second statement, \cite[Corollary~D]{Boutonnet_Houdayer_2021}
implies that, for a weakly mixing representation \(\pi\), every tracial
state on \(C^*_\pi(G)\) factors through the reduced group
\(C^*\)-algebra, after excluding the finite-dimensional character terms.
Those terms are excluded by weak mixing; see
\cite[Lemma~6.5]{Boutonnet_Houdayer_2021}.
\end{proof}

%%%%%%%%%%%%%%%%%%%%%%%%%%%%%%%%%%%%%%%%%%%%%%%%%%%%%%%%%%%%%%%%%%

\subsection{Conjugacy averaging}
\label{subsec:conjugacy-averaging}

\begin{lemma}
\label{lem:higher-rank-conjugacy-averaging}
Let \(s_1,\ldots,s_m\in G\setminus\{e\}\), and let \((V,\pi)\) be a
weakly mixing unitary representation of \(G\). Then, for every
\(v\in V\),
\[
        0\in
        \overline{\operatorname{conv}}
        \left\{
        \bigl(
        \langle \pi(gs_1g^{-1})v,v\rangle,\ldots,
        \langle \pi(gs_mg^{-1})v,v\rangle
        \bigr):g\in G
        \right\}
        \subset \mathbb C^m .
\]
\end{lemma}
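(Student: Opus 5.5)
We argue by contradiction, using a Hahn--Banach separation together with the Boutonnet--Houdayer theorem. We may assume \(\|v\|=1\); the case \(v=0\) is trivial, and the statement is invariant under scaling \(v\). Write \(\omega_\xi\) for the vector state of a unit vector \(\xi\) on \(C^*_\pi(G)\). Since \(\pi(g)^{-1}\pi(s)\pi(g)=\pi(g^{-1}sg)\), we have
\[
        \langle \pi(gs_ig^{-1})v,v\rangle=\omega_{\pi(g^{-1})v}(\pi(s_i))=(g\omega_v)(\pi(s_i)),
\]
where \(g\omega_v=\omega_v\circ\alpha_{g^{-1}}\). So the set in the statement is the image of the orbit \(G\omega_v\) under the continuous affine map
\[
        \Phi(\omega)=\bigl(\omega(\pi(s_1)),\ldots,\omega(\pi(s_m))\bigr).
\]

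Let \(\mathcal K\) be the weak-* closed convex hull of \(G\omega_v\) in \(S(C^*_\pi(G))\). It is nonempty, weak-* compact, convex and \(G\)-invariant. Since \(\Phi\) is affine and weak-* continuous and \(\mathcal K\) is compact, \(\Phi(\mathcal K)\) is compact and convex. It contains the convex hull of \(\Phi(G\omega_v)\) and is contained in its closure, so \(\Phi(\mathcal K)\) equals the closed convex hull in the statement. It therefore suffices to find \(\omega\in\mathcal K\) with \(\omega(\pi(s_i))=0\) for every \(i\).

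By Lemma~\ref{lem:stationary-point-exists}, \(\mathcal K\) contains a \(\kappa\)-stationary state \(\omega\). As computed in Subsection~\ref{subsec:furstenberg-stationary-characters}, \(\chi_\omega(s)=\omega(\pi(s))\) is then a \(\kappa\)-character. By Theorem~\ref{thm:BH-input}(1), \(\chi_\omega\) is conjugation invariant, that is, \(\omega(\pi(gsg^{-1}))=\omega(\pi(s))\) for all \(g,s\).

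Next we upgrade this to traciality of \(\omega\) on \(C^*_\pi(G)\). By linearity,
\[
        \omega(\pi(g)\pi(s))=\chi_\omega(gs)=\chi_\omega(sg)=\omega(\pi(s)\pi(g)).
\]
Hence \(\omega(ab)=\omega(ba)\) for \(a,b\) in the dense \(*\)-subalgebra \(\operatorname{span}\pi(G)\). Since \((a,b)\mapsto\omega(ab)\) is jointly norm continuous, the identity extends to all of \(C^*_\pi(G)\), so \(\omega\) is a tracial state.

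Because \(\pi\) is weakly mixing, Theorem~\ref{thm:BH-input}(2) gives \(\omega=\tau_\pi\). In particular \(\omega(\pi(s_i))=0\) for each \(s_i\neq e\). Thus \(0=\Phi(\omega)\in\Phi(\mathcal K)\), which is the required closed convex hull.

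The main obstacle is conceptual rather than computational: one must make sure the stationary point is taken inside the orbit closure \(\mathcal K\) of \(\omega_v\), not merely somewhere in the full state space. One must also identify \(\Phi(\mathcal K)\) exactly with the closed convex hull of the matrix-coefficient tuples, which uses weak-* continuity of \(\Phi\) and compactness of \(\mathcal K\). Both points are handled above, and the rest is direct invocation of Lemma~\ref{lem:stationary-point-exists} and Theorem~\ref{thm:BH-input}.
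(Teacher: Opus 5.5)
Your proof is correct and follows the paper's argument: a $\kappa$-stationary state lies in the weak-* closed convex hull of the orbit of $\omega_v$; conjugation invariance comes from Theorem~\ref{thm:BH-input}(1), then traciality, then uniqueness of the trace from Theorem~\ref{thm:BH-input}(2). The only difference is that you replace the paper's Hahn--Banach contradiction by the direct, slightly cleaner observation that $\Phi(\mathcal K)$ is the closed convex hull in question; there is also one harmless slip, namely $\omega_{\pi(g^{-1})v}=\omega_v\circ\alpha_g=g^{-1}\omega_v$ rather than $g\omega_v$, which is immaterial since $g$ ranges over all of $G$.
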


\begin{proof}
The case \(v=0\) is immediate. Assume that \(v\neq 0\), and put
\[
        \xi=\frac{v}{\|v\|}.
\]
Suppose that the conclusion fails. Viewing \(\mathbb C^m\) as a real
locally convex vector space, the Hahn--Banach separation theorem gives
\(c_1,\ldots,c_m\in\mathbb C\) and \(\delta>0\) such that
\[
        \operatorname{Re}
        \sum_{j=1}^m c_j
        \langle \pi(gs_jg^{-1})\xi,\xi\rangle
        \geq \delta
        \qquad(g\in G).
\]
We use the separation theorem in the form stated in
\cite[Theorem~3.4]{Rudin_1991}.

Let
\[
        A=C^*_\pi(G),
\]
and let \(\omega_\xi\in S(A)\) be the vector state associated to
\(\xi\). Let \(K\) be the weak-* closed convex hull of the orbit
\[
        \{g\omega_\xi:g\in G\}\subset S(A).
\]
Then \(K\) is nonempty, weak-* compact, convex, and \(G\)-invariant. By
Lemma~\ref{lem:stationary-point-exists}, \(K\) contains a
\(\kappa\)-stationary state \(\omega\).

Define
\[
        \chi(s)=\omega(\pi(s))
        \qquad(s\in G).
\]
By the preceding subsection, \(\chi\) is a \(\kappa\)-character. Hence,
by Theorem~\ref{thm:BH-input}, \(\chi\) is conjugation invariant.

We claim that \(\omega\) is tracial on
\(C^*_\pi(G)\). Indeed, if \(x,y\in G\), then \(xy\) and \(yx\) are
conjugate in \(G\), since
\[
        yx=y(xy)y^{-1}.
\]
Hence
\[
\begin{aligned}
        \omega(\pi(x)\pi(y))
        &=
        \omega(\pi(xy))        \\
        &=
        \chi(xy)               \\
        &=
        \chi(yx)               \\
        &=
        \omega(\pi(yx))        \\
        &=
        \omega(\pi(y)\pi(x)).
\end{aligned}
\]
By linearity, \(\omega(ab)=\omega(ba)\) for all \(a,b\) in the group
algebra \(\mathbb C[G]\), viewed inside \(C^*_\pi(G)\). Since
\(\mathbb C[G]\) is norm dense in \(C^*_\pi(G)\) and \(\omega\) is norm
continuous, \(\omega\) is tracial on \(C^*_\pi(G)\).

Since \(\pi\) is weakly mixing, Theorem~\ref{thm:BH-input}, part (2),
implies that \(\omega\) is the unique tracial state on \(A=C^*_\pi(G)\).
Hence, for every \(s\in G\),
\[
        \omega(\pi(s))
        =
        \begin{cases}
        1, & s=e,\\
        0, & s\neq e .
        \end{cases}
\]
In particular,
\[
        \omega(\pi(s_j))=0
        \qquad(j=1,\ldots,m),
\]
because \(s_j\neq e\).

Now define
\[
        F:S(A)\to\mathbb R,
        \qquad
        F(\omega')
        =
        \operatorname{Re}
        \sum_{j=1}^m c_j\omega'(\pi(s_j)).
\]
The map \(F\) is weak-* continuous and affine. For every \(g\in G\),
\[
\begin{aligned}
        F(g^{-1}\omega_\xi)
        &=
        \operatorname{Re}
        \sum_{j=1}^m c_j
        (g^{-1}\omega_\xi)(\pi(s_j))  \\
        &=
        \operatorname{Re}
        \sum_{j=1}^m c_j
        \omega_\xi(\alpha_g(\pi(s_j))) \\
        &=
        \operatorname{Re}
        \sum_{j=1}^m c_j
        \langle \pi(gs_jg^{-1})\xi,\xi\rangle \\
        &\geq \delta.
\end{aligned}
\]
Since \(F\) is affine and weak-* continuous, the same inequality holds
on \(K\). In particular,
\[
        F(\omega)\geq \delta.
\]
On the other hand, \(\omega(\pi(s_j))=0\) for all \(j\), so
\(F(\omega)=0\), a contradiction.
\end{proof}

%%%%%%%%%%%%%%%%%%%%%%%%%%%%%%%%%%%%%%%%%%%%%%%%%%%%%%%%%%%%%%%%%%

\subsection{Conjugates of cyclic subgroups}
\label{subsec:conjugate-cyclic-subgroups}

For a subgroup \(C\leq G\), let \(P_C\) denote the \emph{orthogonal
projection} onto the \(C\)-fixed subspace.

\begin{proposition}
\label{prop:higher-rank-small-cyclic-projections}
Let \(t\in G\) have infinite order, and let \((V,\pi)\) be a weakly
mixing unitary representation of \(G\). Then, for every \(v\in V\),
\[
        \inf_{g\in G}\|P_{\langle gtg^{-1}\rangle}v\|=0.
\]
\end{proposition}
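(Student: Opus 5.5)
The plan is to reduce the projection onto the fixed space of a conjugate cyclic subgroup to finitely many matrix coefficients, and then apply Lemma~\ref{lem:higher-rank-conjugacy-averaging}. First, by the mean ergodic theorem applied to the unitary \(\pi(gtg^{-1})\), for every \(M\geq 1\) and every \(g\in G\) the orthogonal decomposition argument from the proof of Proposition~\ref{prop:central-aperiodic-averaging} gives
\[
        \|P_{\langle gtg^{-1}\rangle}v\|^2
        \leq
        \Bigl\|\frac1M\sum_{j=0}^{M-1}\pi(gt^jg^{-1})v\Bigr\|^2
        =
        \frac{\|v\|^2}{M}
        +\frac{1}{M^2}\sum_{\substack{i,j=0\\ i\neq j}}^{M-1}
        \langle \pi(g t^{j-i}g^{-1})v,v\rangle .
\]
The left side is real, so we may take real parts. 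Since \(t\) has infinite order, the elements \(s_k=t^k\) with \(k\in\{\pm1,\ldots,\pm(M-1)\}\) all differ from \(e\). The right side is therefore \(\|v\|^2/M\) plus a fixed real-linear combination, with coefficients \(c_k=(M-|k|)/M^2\), of the quantities \(\operatorname{Re}\langle\pi(gs_kg^{-1})v,v\rangle\).

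Next, fix \(M\) and apply Lemma~\ref{lem:higher-rank-conjugacy-averaging} to the finite family \(\{s_k\}\). Zero lies in the closed convex hull of the vectors \(\bigl(\langle\pi(gs_kg^{-1})v,v\rangle\bigr)_k\), \(g\in G\). The map \(\Phi\) sending such a vector to \(\sum_k c_k\operatorname{Re}(\cdot)\) is real-affine and continuous, so \(0\) lies in the closure of the convex hull of \(\{\Phi(g):g\in G\}\subset\mathbb R\). Hence \(\inf_g\Phi(g)\leq 0\). Choosing \(g\) with \(\Phi(g)<1/M\) gives \(\|P_{\langle gtg^{-1}\rangle}v\|^2< (\|v\|^2+1)/M\). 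Letting \(M\to\infty\) proves the claim.

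The only delicate point is the passage from a convex-hull statement to a single group element. This works because the bound being minimized is affine in the vector of matrix coefficients, so a convex combination with value near \(0\) forces some individual \(g\) to have value at most near \(0\). I would make sure the diagonal terms are handled separately; they are constant and not covered by the lemma, since \(s=e\) is excluded there. Everything else is routine. The conjugated subgroups \(\langle gtg^{-1}\rangle\) are automatically infinite cyclic.
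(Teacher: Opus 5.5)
Your proposal is correct and takes essentially the same route as the paper. Both bound $\|P_{\langle gtg^{-1}\rangle}v\|^2$ by the Fejér quadratic form $\bigl\|\frac1N\sum_{r<N}\pi(gt^rg^{-1})v\bigr\|^2$, expand it as $\|v\|^2/N$ plus a fixed combination of the coefficients $\langle\pi(gt^kg^{-1})v,v\rangle$ with $0<|k|<N$, and use Lemma~\ref{lem:higher-rank-conjugacy-averaging} to find one conjugate where that combination is small. The differences are cosmetic: you get the domination from the invariant orthogonal decomposition rather than the paper's functional-calculus inequality $1_{\{1\}}\le\phi_N$, and you push the closed convex hull through a real-linear functional instead of choosing an explicit convex combination and pigeonholing over its terms.
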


\begin{proof}
Fix \(v\in V\) and \(\eta>0\). For \(N\geq 1\) and \(g\in G\), set
\[
        M_N(g)=\frac{1}{N}\sum_{r=0}^{N-1}\pi(gt^rg^{-1}),
        \qquad
        A_N(g)=M_N(g)^*M_N(g).
\]
Then \(A_N(g)\) is a positive self-adjoint bounded operator. Moreover,
\[
\begin{aligned}
        \langle A_N(g)v,v\rangle
        &=
        \frac{1}{N^2}
        \sum_{r,q=0}^{N-1}
        \langle \pi(gt^{q-r}g^{-1})v,v\rangle  \\
        &=
        \frac{\|v\|^2}{N}
        +
        \sum_{0<|k|<N}
        \frac{N-|k|}{N^2}
        \langle \pi(gt^kg^{-1})v,v\rangle .
\end{aligned}
\]

Choose \(N\) so large that
\[
        \frac{\|v\|^2}{N}<\frac{\eta}{2}.
\]
Since \(t\) has infinite order, \(t^k\neq e\) for all \(0<|k|<N\).
By Lemma~\ref{lem:higher-rank-conjugacy-averaging}, there are
\(g_1,\ldots,g_\ell\in G\) and \(a_1,\ldots,a_\ell\geq 0\), with
\[
        \sum_{i=1}^{\ell}a_i=1,
\]
such that
\[
        \left|
        \sum_{i=1}^{\ell} a_i
        \langle \pi(g_it^kg_i^{-1})v,v\rangle
        \right|
        <
        \frac{\eta}{2}
        \qquad(0<|k|<N).
\]
Averaging the preceding identity over \(i\), taking real parts, and
using absolute values gives
\[
\begin{aligned}
        \sum_{i=1}^{\ell}a_i
        \langle A_N(g_i)v,v\rangle
        &\leq
        \frac{\|v\|^2}{N}
        +
        \sum_{0<|k|<N}
        \frac{N-|k|}{N^2}
        \left|
        \sum_{i=1}^{\ell}a_i
        \langle \pi(g_it^kg_i^{-1})v,v\rangle
        \right| .
\end{aligned}
\]
Since
\[
        \sum_{0<|k|<N}\frac{N-|k|}{N^2}<1,
\]
we obtain
\[
\begin{aligned}
        \sum_{i=1}^{\ell}a_i
        \langle A_N(g_i)v,v\rangle
        &<
        \frac{\eta}{2}
        +
        \frac{\eta}{2}
        =
        \eta .
\end{aligned}
\]
Each term \(\langle A_N(g_i)v,v\rangle\) is nonnegative. Hence, for
some \(i\),
\[
        \langle A_N(g_i)v,v\rangle<\eta .
\]

Let
\[
        C_i=\langle g_itg_i^{-1}\rangle
\]
and put
\[
        U_i=\pi(g_itg_i^{-1}).
\]
Define
\[
        \phi_N(z)
        =
        \left|
        \frac1N\sum_{r=0}^{N-1}z^r
        \right|^2,
        \qquad z\in\mathbb T .
\]
By functional calculus,
\[
        A_N(g_i)=\phi_N(U_i).
\]
Moreover,
\[
        \phi_N(z)\geq 0
        \quad\text{for all }z\in\mathbb T,
        \qquad
        \phi_N(1)=1.
\]
Hence
\[
        1_{\{1\}}\leq \phi_N
\]
pointwise on \(\mathbb T\).

Let \(E_i\) be the spectral measure of \(U_i\). The \(C_i\)-fixed
subspace is the \(1\)-eigenspace of \(U_i\), so
\[
        P_{C_i}=E_i(\{1\})=1_{\{1\}}(U_i).
\]
Therefore
\[
        P_{C_i}
        =
        1_{\{1\}}(U_i)
        \leq
        \phi_N(U_i)
        =
        A_N(g_i)
\]
in the positive-operator order. Consequently,
\[
        \|P_{C_i}v\|^2
        =
        \langle P_{C_i}v,v\rangle
        \leq
        \langle A_N(g_i)v,v\rangle
        <
        \eta .
\]
Since \(\eta>0\) was arbitrary, this proves
\[
        \inf_{g\in G}\|P_{\langle gtg^{-1}\rangle}v\|=0.
\]
\end{proof}

\begin{proof}[Proof of Theorem~\ref{thm:higher-rank-lattices}, part (1)]
Let
\[
        G\curvearrowright (X,\mu)
\]
be a weakly mixing probability-measure-preserving action, and let
\(\pi\) be the Koopman representation on \(L^2_0(X,\mu)\). Then
\(\pi\) is weakly mixing.

We first choose an element of infinite order in \(G\). Since \(L\) has
trivial centre, the adjoint representation embeds \(L\) into
\(\operatorname{GL}(\mathfrak l)\), where \(\mathfrak l\) is the Lie
algebra of \(L\). Thus \(G\) is linear. Moreover, \(G\) is finitely
generated, since lattices in connected semisimple Lie groups are
finitely generated; see \cite[Remark~13.21]{Raghunathan_1972}. If every
element of \(G\) had finite order, then Schur's theorem would imply that
\(G\) is finite; see \cite{Schur_1911}. This is impossible, since \(L\)
is noncompact and \(G\) is a lattice. Hence there exists
\[
        t\in G
\]
of infinite order.

By Proposition~\ref{prop:higher-rank-small-cyclic-projections}, for
every \(f\in L^2_0(X,\mu)\),
\[
        \inf_{g\in G}
        \|P_{\langle gtg^{-1}\rangle}f\|=0 .
\]
We now apply the conditional-expectation estimate from
Lemma~\ref{lem:cyclic-escape-implies-directional-expansion}. Let
\(B\subset X\) be measurable with \(\mu(B)>0\), and put
\[
        f=1_B-\mu(B).
\]
For every \(\varepsilon>0\), the preceding display gives some
\(g\in G\) such that
\[
        \|P_{\langle gtg^{-1}\rangle}f\|_2
\]
is sufficiently small. The estimate in
Lemma~\ref{lem:cyclic-escape-implies-directional-expansion} then gives
\[
        \mu(\langle gtg^{-1}\rangle B)>1-\varepsilon .
\]
Thus the action is directionally expansive.
\end{proof}

%%%%%%%%%%%%%%%%%%%%%%%%%%%%%%%%%%%%%%%%%%%%%%%%%%%%%%%%%%%%%%%%%%

\subsection{From total ergodicity to weak mixing}
\label{subsec:total-ergodicity-weak-mixing}

\begin{lemma}
\label{lem:total-ergodic-implies-weak-mixing}
Let \(G\) be an infinite countable group such that every
finite-dimensional unitary representation of \(G\) has finite image.
Then every totally ergodic unitary representation of \(G\) is weakly
mixing.
\end{lemma}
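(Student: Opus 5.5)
We argue by contraposition. Recall that a unitary representation \((V,\pi)\) is weakly mixing if and only if it has no nonzero finite-dimensional \(G\)-invariant subspace; equivalently, \(\pi\otimes\overline{\pi}\) has no nonzero invariant vectors. Suppose \((V,\pi)\) is totally ergodic but not weakly mixing. Then there is a nonzero finite-dimensional \(G\)-invariant subspace \(W\subset V\). The plan is to produce a nonzero vector fixed by a finite-index subgroup, contradicting total ergodicity.

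The key step is as follows. The restriction \(\pi|_W\) is a finite-dimensional unitary representation of \(G\), so by hypothesis its image \(\pi|_W(G)\) is a finite group. Hence
\[
        H=\ker(\pi|_W)=\{g\in G:\pi(g)w=w\text{ for all }w\in W\}
\]
is a normal subgroup of finite index in \(G\). Every nonzero \(w\in W\) is then \(H\)-fixed, so \(V^H\supseteq W\neq\{0\}\). This contradicts total ergodicity, and therefore \(\pi\) is weakly mixing.

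The only point requiring care is the characterization of weak mixing used above. If weak mixing is defined as ergodicity of \(\pi\otimes\overline\pi\), equivalently the absence of nonzero \(G\)-invariant Hilbert--Schmidt operators on \(V\), we must still extract a finite-dimensional invariant subspace. Given a nonzero invariant Hilbert--Schmidt operator \(T\), the operator \(T^*T\) is compact, self-adjoint, nonzero, and commutes with \(\pi(G)\). Any nonzero eigenspace of \(T^*T\) is therefore finite-dimensional and \(G\)-invariant. This standard spectral argument is the main, and mild, obstacle. Infinitude of \(G\) plays no role beyond matching the ambient conventions.
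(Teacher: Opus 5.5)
Your proof is correct and follows the paper's argument: a nonzero finite-dimensional invariant subspace \(W\) has finite-image restriction, so the finite-index kernel \(H\) fixes \(W\), which contradicts \(V^H=\{0\}\). Your extra step, which extracts \(W\) from a nonzero \(G\)-invariant Hilbert--Schmidt operator \(T\) via the eigenspaces of \(T^*T\), is a correct justification of a point the paper leaves implicit.
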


\begin{proof}
Let \((V,\pi)\) be a totally ergodic unitary representation of \(G\).
Suppose that \(\pi\) is not weakly mixing. Then \(\pi\) has a nonzero
finite-dimensional invariant subspace \(W\subset V\). Let
\[
        \rho:G\to\mathcal U(W)
\]
be the corresponding finite-dimensional representation. By assumption,
\(\rho(G)\) is finite. Hence
\[
        H=\ker\rho
\]
has finite index in \(G\).

The subgroup \(H\) acts trivially on \(W\). Thus every vector in \(W\)
is \(H\)-fixed. Since \((V,\pi)\) is totally ergodic, we have
\[
        V^H=\{0\}.
\]
This contradicts \(W\neq\{0\}\). Hence \(\pi\) is weakly mixing.
\end{proof}

The finite-image hypothesis is used exactly to pass from a
finite-dimensional obstruction to weak mixing to a finite-index subgroup
acting trivially on that obstruction. It is not a formal consequence of
total ergodicity.

\begin{proof}[Proof of Theorem~\ref{thm:higher-rank-lattices}, part (2), when \(L\) has trivial centre]
Let \((V,\pi)\) be a totally ergodic unitary representation of \(G\),
and let \(v\in V\). By
Lemma~\ref{lem:total-ergodic-implies-weak-mixing}, the representation
\((V,\pi)\) is weakly mixing.

Since \(G\) is infinite and linear, it contains an element \(t\in G\) of
infinite order, as in the proof of part (1). Therefore
Proposition~\ref{prop:higher-rank-small-cyclic-projections} gives
\[
        \inf_{g\in G}
        \|P_{\langle gtg^{-1}\rangle}v\|=0.
\]
Since the subgroups \(\langle gtg^{-1}\rangle\) are infinite cyclic,
this proves that \(G\) has the cyclic escape property.

The action-level assertion follows from
Lemma~\ref{lem:cyclic-escape-implies-directional-expansion}.
\end{proof}

%%%%%%%%%%%%%%%%%%%%%%%%%%%%%%%%%%%%%%%%%%%%%%%%%%%%%%%%%%%%%%%%%%

\subsection{Arithmetic examples}
\label{subsec:arithmetic-examples}

We now record a general criterion for the \emph{finite-image hypothesis}
appearing in Theorem~\ref{thm:higher-rank-lattices}. We use standard algebraic-group notation. If \(\mathbf L\) is a real
algebraic group, then \(\mathbf L(\mathbb R)\) denotes its group of real
points, and \(\mathbf L(\mathbb R)^\circ\) denotes the identity
component in the usual real topology. The adjective \emph{adjoint} means
that the algebraic group has trivial centre. A representation of
\(\mathbf L(\mathbb R)^\circ\) is called algebraic if it is the
restriction to \(\mathbf L(\mathbb R)^\circ\) of a rational
representation of \(\mathbf L\).

\begin{lemma}
\label{lem:unitary-representations-finite-image-general}
Let \(\mathbf L\) be a connected adjoint simple algebraic group over
\(\mathbb R\), and let \(L=\mathbf L(\mathbb R)^\circ\). Assume that
\(L\) is generated by its unipotent one-parameter subgroups. Let
\(\Gamma<L\) be a lattice with the following superrigidity property:
whenever
\[
        \rho:\Gamma\to \operatorname{GL}(W)
\]
is a finite-dimensional representation with infinite image, there is a
finite-index subgroup \(\Gamma_0\leq \Gamma\) such that
\(\rho|_{\Gamma_0}\) agrees with the restriction of an algebraic
representation
\[
        \overline{\rho}:L\to \operatorname{GL}(W).
\]
Then every finite-dimensional unitary representation of \(\Gamma\) has
finite image.
\end{lemma}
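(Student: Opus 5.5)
We argue by contradiction. Let \(\rho:\Gamma\to\mathcal U(W)\) be a finite-dimensional unitary representation, and suppose \(\rho(\Gamma)\) is infinite. Viewing \(\rho\) as a representation into \(\operatorname{GL}(W)\), the superrigidity hypothesis gives a finite-index subgroup \(\Gamma_0\leq\Gamma\) and an algebraic representation \(\overline\rho:L\to\operatorname{GL}(W)\) with \(\rho|_{\Gamma_0}=\overline\rho|_{\Gamma_0}\). The plan is to show that \(\overline\rho\) must be trivial. That would make \(\rho(\Gamma_0)\) trivial, so \(\rho(\Gamma)\) has order at most \([\Gamma:\Gamma_0]\), contradicting infiniteness.

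\textbf{Step 1: \(\overline\rho(L)\) is bounded.} Since \(\rho(\Gamma_0)\subset\mathcal U(W)\), the group \(\overline\rho(\Gamma_0)\) lies in the compact group \(\mathcal U(W)\). The subgroup \(\Gamma_0\) is again a lattice in \(L\). Because \(L\) is connected, simple and noncompact (it is generated by unipotent one-parameter subgroups, hence not compact unless trivial), the Borel density theorem applies to \(\Gamma_0\). It says \(\Gamma_0\) is Zariski dense in \(\mathbf L\), in the sense that every algebraic representation of \(L\) has the same invariant polynomials and the same Zariski closure on \(\Gamma_0\) as on \(L\). Now \(\mathcal U(W)\) is a real algebraic subgroup of \(\operatorname{GL}(W)\) viewed over \(\mathbb R\). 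Hence the real algebraic set \(\overline\rho^{-1}(\mathcal U(W))\) contains \(\Gamma_0\) and therefore all of \(L\). So \(\overline\rho(L)\subset\mathcal U(W)\), a compact group.

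\textbf{Step 2: unipotents act trivially.} Let \(u(s)=\exp(sX)\) be a unipotent one-parameter subgroup of \(L\). Because \(\overline\rho\) is algebraic, \(\overline\rho(u(s))=\exp(s\,d\overline\rho(X))\), and \(d\overline\rho(X)\) is nilpotent. Hence \(s\mapsto\overline\rho(u(s))\) is a polynomial in \(s\). A polynomial map with bounded image is constant, so \(\overline\rho(u(s))=1\) for all \(s\). Since \(L\) is generated by its unipotent one-parameter subgroups, \(\overline\rho\) is trivial on \(L\). This completes the argument outlined above.

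Alternatively, one can see Step 2 by noting that \(\ker\overline\rho\) is a closed normal subgroup of the simple group \(L\) containing a nontrivial unipotent subgroup, so \(\ker\overline\rho=L\).

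The main obstacle is Step 1: transferring boundedness from \(\Gamma_0\) to \(L\). Borel density must be applied in the form appropriate to \(\overline\rho\), for instance through the algebraic map \(g\mapsto\overline\rho(g)^*\overline\rho(g)\), which is identically \(1\) on \(\Gamma_0\) and hence on its Zariski closure, which is all of \(L\). One must also check that the standing hypotheses (adjoint, simple, \(L\) noncompact) place us in the setting of Borel's theorem.
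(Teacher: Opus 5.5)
Your proposal is correct and follows the paper's proof essentially step for step. You use superrigidity to extend $\rho|_{\Gamma_0}$ to an algebraic $\overline{\rho}$, then Borel density applied to the real-algebraic unitarity condition to force $\overline{\rho}(L)\subset\mathcal U(W)$, and finally triviality on unipotent one-parameter subgroups together with generation to conclude that $\overline{\rho}$ is trivial. The only difference is cosmetic: in Step 2 the paper simply observes that a unitary unipotent operator is the identity, and your bounded-polynomial argument is an equivalent way of saying this.
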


\begin{proof}
Let
\[
        \rho:\Gamma\to \mathcal U(W)
\]
be a finite-dimensional unitary representation. Suppose that \(\rho\)
has infinite image. By the assumed superrigidity property, after
replacing \(\Gamma\) by a finite-index subgroup \(\Gamma_0\), the
representation \(\rho|_{\Gamma_0}\) agrees with the restriction of an
algebraic representation
\[
        \overline{\rho}:L\to \operatorname{GL}(W).
\]

By the Borel density theorem, \(\Gamma_0\) is Zariski dense in
\(\mathbf L\). Since \(\rho(\Gamma_0)\) preserves the Hermitian form on
\(W\), and this preservation condition is real algebraic,
\(\overline{\rho}(L)\) preserves the same Hermitian form. Hence
\[
        \overline{\rho}(L)\subset \mathcal U(W).
\]

Let \(u\in L\) be unipotent. Since \(\overline{\rho}\) is algebraic,
\(\overline{\rho}(u)\) is unipotent. It is also unitary. A unitary
unipotent operator is the identity. Hence \(\overline{\rho}\) is trivial
on every unipotent one-parameter subgroup of \(L\). Since these
subgroups generate \(L\), the representation \(\overline{\rho}\) is
trivial.

Thus \(\rho\) is trivial on the finite-index subgroup \(\Gamma_0\).
Therefore \(\rho(\Gamma)\) is finite, contradicting the assumption that
it was infinite.
\end{proof}

The hypotheses of
Lemma~\ref{lem:unitary-representations-finite-image-general} hold for
the standard higher-rank arithmetic lattices considered here. The
required virtual algebraic extension is the compact-target form of
Margulis superrigidity; see \cite[Chapter~VII, Theorems~5.1
and~7.1(b)]{Margulis_1991} and
\cite[Corollary~16.4.1]{Morris_2015}. For the groups below, the ambient
real group is generated by its unipotent one-parameter subgroups.

\begin{corollary}
\label{cor:PSLnZ-finite-image}
For every \(n\geq 3\), every finite-dimensional unitary representation
of \(\operatorname{PSL}_n(\mathbb Z)\) has finite image.
\end{corollary}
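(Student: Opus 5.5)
The plan is to apply Lemma~\ref{lem:unitary-representations-finite-image-general} with \(\mathbf L=\operatorname{PGL}_n\) over \(\mathbb R\), so that \(L=\mathbf L(\mathbb R)^\circ\) is the identity component of \(\operatorname{PGL}_n(\mathbb R)\). For \(n\geq 3\) this is \(\operatorname{PSL}_n(\mathbb R)\): when \(n\) is odd, \(\operatorname{PGL}_n(\mathbb R)=\operatorname{PSL}_n(\mathbb R)\) is already connected, and when \(n\) is even, the identity component is the image of \(\operatorname{SL}_n(\mathbb R)\). The group \(\mathbf L\) is a connected adjoint simple algebraic group over \(\mathbb R\). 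Its real rank is \(n-1\geq 2\). Take \(\Gamma=\operatorname{PSL}_n(\mathbb Z)\), the image of \(\operatorname{SL}_n(\mathbb Z)\) in \(L\). It is a lattice because \(\operatorname{SL}_n(\mathbb Z)\) is a lattice in \(\operatorname{SL}_n(\mathbb R)\) (Borel--Harish-Chandra, or directly via Siegel sets), and the projection \(\operatorname{SL}_n(\mathbb R)\to\operatorname{PSL}_n(\mathbb R)\) has finite kernel.

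Next I verify the generation hypothesis. \(\operatorname{SL}_n(\mathbb R)\) is generated by the elementary matrices \(I+tE_{ij}\) with \(i\neq j\) (row reduction). These form unipotent one-parameter subgroups, and their images in \(L\) are again unipotent one-parameter subgroups. Hence \(L\) is generated by its unipotent one-parameter subgroups.

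The remaining hypothesis is the superrigidity property. Let \(\rho:\Gamma\to\operatorname{GL}(W)\) have infinite image. I will cite Margulis superrigidity in the form quoted just before the statement \cite[Chapter~VII, Theorems~5.1 and~7.1(b)]{Margulis_1991}, \cite[Corollary~16.4.1]{Morris_2015}. It gives a finite-index \(\Gamma_0\leq\Gamma\) on which \(\rho\) extends to a continuous homomorphism \(L\to\operatorname{GL}(W)\). The step needing care is that this extension is algebraic, that is, rational on \(\mathbf L\). Two facts handle it. Continuous representations of \(\operatorname{PSL}_n(\mathbb R)\) come from finite-dimensional representations of the Lie algebra, and hence from rational representations of the simply connected cover \(\operatorname{SL}_n\). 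Such a representation factoring through \(L\) is trivial on the centre \(\mu_n\), so it descends to the adjoint group \(\mathbf L\).

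The possible compact-factor issue in superrigidity (compact target closure) does not arise for unbounded images. If the Zariski closure of \(\rho(\Gamma)\) had a compact part, the finite-image conclusion would still follow, which is all we need. With all hypotheses verified, Lemma~\ref{lem:unitary-representations-finite-image-general} yields that every finite-dimensional unitary representation of \(\operatorname{PSL}_n(\mathbb Z)\) has finite image. I expect the main obstacle to be this precise matching of the superrigidity statement (extension on a finite-index subgroup, algebraicity) with the hypothesis as formulated in the lemma.
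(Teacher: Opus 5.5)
Your architecture matches the paper's: apply Lemma~\ref{lem:unitary-representations-finite-image-general} to $\operatorname{PSL}_n(\mathbb Z)<\operatorname{PSL}_n(\mathbb R)$, check generation by elementary unipotent one-parameter subgroups, and take the superrigidity hypothesis from Margulis. The lattice and generation checks are fine. The gap is in the superrigidity step, which is where all of the content lies. You take superrigidity to say that every $\rho$ with infinite image agrees, on a finite-index subgroup, with a continuous homomorphism $L\to\operatorname{GL}(W)$. For a general lattice in $L$ this is false. Superrigidity only gives such an extension up to a bounded (compact) error. For unitary $\rho$, whose image is always bounded and whose Zariski closure is always compact, that error can be all of $\rho$. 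The paper's own example shows this: the projection $G$ of an irreducible lattice in $\operatorname{SL}_3(\mathbb R)\times\operatorname{SU}(3)$ is a lattice in $\operatorname{SL}_3(\mathbb R)=\operatorname{PSL}_3(\mathbb R)$ with a unitary representation of dense image in $\operatorname{SU}(3)$. Apart from the lattice check, nothing in your argument distinguishes $\operatorname{PSL}_n(\mathbb Z)$ from such a $G$, so as written it would prove a false statement. Your last paragraph does not repair this. ``Does not arise for unbounded images'' is beside the point, since unitary representations never have unbounded image. ``The finite-image conclusion would still follow'' is the corollary itself.

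What is missing is the compact-target (arithmetic) form of superrigidity that the paper cites, together with the reason it applies here. $\operatorname{PSL}_n(\mathbb Z)$ comes from a group defined over $\mathbb Q$, and $\mathbb Q$ has a single archimedean place, at which the group is the noncompact $\operatorname{PSL}_n(\mathbb R)$. An infinite bounded image can only arise from a further archimedean place at which the group is compact, and there is none. (For $G$ above, the defining number field has a second archimedean place, and that is where $\operatorname{SU}(3)$ comes from.) Alternatively, cite Bass--Milnor--Serre: every finite-dimensional complex representation of $\operatorname{SL}_n(\mathbb Z)$, $n\geq 3$, agrees on a finite-index subgroup with a rational representation of $\operatorname{SL}_n$.

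A smaller error is in your algebraicity step. A rational representation of $\operatorname{SL}_n$ that factors through $L$ is trivial only on the real points of the centre, not on all of $\mu_n$. For odd $n$ the standard representation of $\operatorname{SL}_n(\mathbb R)=\operatorname{PSL}_n(\mathbb R)$ does not descend to $\operatorname{PGL}_n$. For $n=4$ the representation on $\Lambda^2\mathbb C^4$ kills $-I$ but not $iI$. So the lemma's hypothesis, read literally with the adjoint group, cannot be verified this way. This is harmless once noticed: run the lemma's argument on $\operatorname{SL}_n$ instead. Use Borel density for finite-index subgroups of $\operatorname{SL}_n(\mathbb Z)$ and the fact that rational representations send unipotent elements to unipotent elements.
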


\begin{proof}
Apply Lemma~\ref{lem:unitary-representations-finite-image-general} to
\[
        \Gamma=\operatorname{PSL}_n(\mathbb Z)
        <
        L=\operatorname{PSL}_n(\mathbb R).
\]
The group \(\operatorname{PSL}_n(\mathbb R)\) is generated by its
elementary unipotent one-parameter subgroups.
\end{proof}

More generally, the same argument applies to many standard higher-rank
arithmetic lattices. For instance, after passing to the adjoint group
and the identity component of the real points, it applies to
\[
        \operatorname{PSp}_{2n}(\mathbb Z),
        \qquad n\geq 2,
\]
and to arithmetic orthogonal lattices associated to integral quadratic
forms of signature \((p,q)\) with
\[
        \min(p,q)\geq 2
        \qquad\text{and}\qquad
        p+q\geq 5 .
\]
Equivalently, for such a form \(f\), it applies to lattices commensurable
with
\[
        \mathbf{PO}_f(\mathbb Z)\cap \mathbf{PO}_f(\mathbb R)^\circ .
\]
For the construction and classification of these standard arithmetic
lattices in classical groups, see
\cite[Proposition~15.15 and Figure~15.2]{Morris_2015}; for the
superrigidity input used above, see
\cite[Chapter~VII, Theorems~5.1 and~7.1(b)]{Margulis_1991} or
\cite[Corollary~16.4.1]{Morris_2015}. Thus these groups, and their
finite-index variants, have the same finite-image property for
finite-dimensional unitary representations.

\begin{proof}[Proof of Corollary~\ref{cor:SL-PSL-PGLnZ}]
First consider
\[
        \operatorname{PSL}_n(\mathbb Z)
        <
        \operatorname{PSL}_n(\mathbb R).
\]
By Corollary~\ref{cor:PSLnZ-finite-image}, every finite-dimensional
unitary representation of \(\operatorname{PSL}_n(\mathbb Z)\) has finite
image. Hence Theorem~\ref{thm:higher-rank-lattices}, part (2), implies
that \(\operatorname{PSL}_n(\mathbb Z)\) has the cyclic escape property.

The natural quotient map
\[
        \operatorname{SL}_n(\mathbb Z)
        \longrightarrow
        \operatorname{PSL}_n(\mathbb Z)
\]
has finite central kernel. The group \(\operatorname{SL}_n(\mathbb Z)\)
is residually finite, for instance by reduction modulo primes. Therefore
Lemma~\ref{lem:finite-kernel-transfer} implies that
\(\operatorname{SL}_n(\mathbb Z)\) has the cyclic escape property.

Finally,
\[
        \operatorname{PSL}_n(\mathbb Z)
        \leq
        \operatorname{PGL}_n(\mathbb Z)
\]
has finite index. Since \(\operatorname{PSL}_n(\mathbb Z)\) has the
cyclic escape property,
Lemma~\ref{lem:finite-index-overgroup-cyclic-escape} implies that
\(\operatorname{PGL}_n(\mathbb Z)\) has the cyclic escape property.

The action-level conclusion for all three groups follows from
Lemma~\ref{lem:cyclic-escape-implies-directional-expansion}.
\end{proof}

The finite-image hypothesis in Theorem~\ref{thm:higher-rank-lattices},
part (2), is not automatic for higher-rank lattices. For example, there
are irreducible arithmetic lattices
\[
        \Gamma<\operatorname{SL}_3(\mathbb R)\times \operatorname{SU}(3);
\]
see \cite[Proposition~15.20]{Morris_2015}. Let $G$
be the projection of such a lattice to \(\operatorname{SL}_3(\mathbb R)\).
Since \(\operatorname{SU}(3)\) is compact, \(G\) is a lattice in
\(\operatorname{SL}_3(\mathbb R)\), after passing to a finite-index
subgroup if necessary to remove the finite kernel of the projection.
Since \(\Gamma\) is irreducible, its projection to \(\operatorname{SU}(3)\)
is dense. Hence \(G\) admits a finite-dimensional unitary representation
with dense image in \(\operatorname{SU}(3)\).

Therefore \(G\) admits a totally ergodic compact action which is not
weakly mixing and not directionally expansive, by
Theorem~\ref{thm:compact-obstruction}. This shows that the finite-image
hypothesis in Theorem~\ref{thm:higher-rank-lattices}, part (2), cannot
be omitted from the argument.

%%%%%%%%%%%%%%%%%%%%%%%%%%%%%%%%%%%%%%%%%%%%%%%%%%%%%%%%%%%%%%%%%%%%%%

\subsection{The finite-centre reduction}
\label{subsec:finite-centre-reduction}

\begin{proof}[Completion of the proof of Theorem~\ref{thm:higher-rank-lattices}]
It remains only to prove part (2) when \(L\) has finite centre. Let
\[
        \overline L=L/Z(L)
\]
be the adjoint quotient, and let
\[
        q:L\to \overline L
\]
be the quotient homomorphism. Then \(\overline L\) is a connected
noncompact simple Lie group with trivial centre and real rank at least
two. The image
\[
        \overline G=q(G)
\]
is a lattice in \(\overline L\), and the kernel of
\[
        q|_G:G\to \overline G
\]
is finite.

We first check that \(\overline G\) satisfies the same finite-image
hypothesis. Let
\[
        \rho:\overline G\to \mathcal U(W)
\]
be a finite-dimensional unitary representation. Then
\[
        \rho\circ q|_G:G\to \mathcal U(W)
\]
has finite image by assumption. Since \(q|_G\) maps \(G\) onto
\(\overline G\), it follows that \(\rho(\overline G)\) is finite.

By the trivial-centre case of part (2), the group \(\overline G\) has
the cyclic escape property. Since \(G\) is a lattice in a connected
semisimple Lie group with finite centre, it is finitely generated by
\cite[Remark~13.21]{Raghunathan_1972}. It is also linear, and hence
residually finite by Mal'cev's theorem~\cite{Malcev_1940}. Therefore
Lemma~\ref{lem:finite-kernel-transfer} applies to
\[
        1\longrightarrow \ker(q|_G)
        \longrightarrow G
        \longrightarrow \overline G
        \longrightarrow 1
\]
and shows that \(G\) has the cyclic escape property. This proves part
(2) in the finite-centre case.
\end{proof}

%%%%%%%%%%%%%%%%%%%%%%%%%%%%%%%%%%%%%%%%%%%%%%%%%%%%%%%%%%%%%%%%%%%%%%%%
%%%%%%%%%%%%%%%%%%%%%%%%%%%%%%%%%%%%%%%%%%%%%%%%%%%%%%%%%%%%%%%%%%%%%%%%
%%%%%%%%%%%%%%%%%%%%%%%%%%%%%%%%%%%%%%%%%%%%%%%%%%%%%%%%%%%%%%%%%%%%%%%%
%%%%%%%%%%%%%%%%%%%%%%%%%%%%%%%%%%%%%%%%%%%%%%%%%%%%%%%%%%%%%%%%%%%%%%%%
%%%%%%%%%%%%%%%%%%%%%%%%%%%%%%%%%%%%%%%%%%%%%%%%%%%%%%%%%%%%%%%%%%%%%%%%

\end{document}